\numberwithin{equation}{section}
\numberwithin{figure}{section}
\theoremstyle{plain}
\newtheorem{thm}{\protect\theoremname}
  \theoremstyle{plain}
  \newtheorem{prop}[thm]{\protect\propositionname}
  \theoremstyle{remark}
  \theoremstyle{plain}
  \newtheorem{lem}[thm]{\protect\lemmaname}
  \theoremstyle{plain}
  \newtheorem{cor}[thm]{\protect\corollaryname}
\newtheorem{mydef}{Definition}
\newcommand{\naive}{na\"{\i}ve}
\newtheorem{myax}{Axiom}
\newtheorem{myres}{Result}
\numberwithin{equation}{section}
\numberwithin{figure}{section}
\theoremstyle{plain}
  \theoremstyle{plain}
  \theoremstyle{definition}
  \newtheorem{example}[thm]{\protect\examplename}
\providecommand{\corollaryname}{Corollary}
  \providecommand{\examplename}{Example}
  \providecommand{\lemmaname}{Lemma}
  \providecommand{\remarkname}{Remark}
\providecommand{\theoremname}{Theorem}
\providecommand{\propositionname}{Proposition}
  \providecommand{\corollaryname}{Corollary}
  \providecommand{\lemmaname}{Lemma}
  \providecommand{\remarkname}{Remark}
\providecommand{\theoremname}{Theorem}
\begin{document}
\global\long\def\qrrel#1#2#3{\hat{#1}\hat{#2}\left(#3\right)}

\global\long\def\qrabstract#1#2{\hat{#1}\left(#2\right)}

\global\long\def\usc#1{\mathscr{P}_{1}(#1)}
\global\long\def\sc#1{\mathscr{P}(#1)}
\global\long\def\Nn{\mathbb{N}}
\global\long\def\closure#1#2{*#1``#2}
\global\long\def\code#1{\ulcorner#1\urcorner}
\global\long\def\varusc#1{\iota``#1}
\global\long\def\tuple#1{\langle #1\rangle }
\global\long\def\imp{\Rightarrow}
\global\long\def\bic{\Leftrightarrow}
\global\long\def\cat#1{\mathbf{#1}}
\global\long\def\inj{\rightarrowtail}
\global\long\def\restric{\restriction}
\global\long\def\fst{\mathtt{fst}}
 \global\long\def\snd{\mathtt{snd}}

\title{Category Theory with Stratified Set Theory}

\author{Thomas Forster, Adam Lewicki, Alice Vidrine}

\begin{abstract}
	
	This paper examines the category theory of stratified set theory (NF and KF). We work out the properties of the relevant categories of sets, and introduce a functorial analogue to Specker's T-operation. Such a
        development leads one to consider the appropriate notion of ``elementary topos'' for stratified set theories. In addition to considering the categorical properties of a generic model of NF set theory, we identify a stratified Yoneda Lemma and show NF encodes itself as a full internal subcategory. Finally, our desire to examine NF in the context of category theory motivates a more precise
        examination of strongly cantorian as an appropriate notion of smallness, replacing it with the notion of \textit{fibrewise} strongly cantorian. In the absence of Choice, we introduce a new axiom (SCU) to NF, and examine some properties of NF + SCU.
	
\end{abstract}

\maketitle

\section{Introduction}

Category Theory and Set Theory form a natural duality. The study of this duality, in the
context of classical ZF-like set theories, is well developed. In set theories with stratified comprehension/separation axioms, however, there has been no significant investigation. We initiate this project by examining the
properties of $\cat{N}$, the category of NF sets, as well as touching upon the internal and fibred category theory of NF.

As a foundational set theory for category theory, NF has its appeal. Size issues in ZF(C) require one to work with a hierarchy of classes and super-classes, or to accept solutions such as Grothendieck Universes, often requiring cardinal assumptions. NF's capacity to internalize traditionally ``large'' categories provides an elegant solution. There are, however, a number of desirable foundational properties which NF does not possess. Rather than size restrictions, stratified comprehension places restrictions on syntax, which results in neither a cartesian closed category of sets, nor a suitably natural form of the Yoneda Lemma. The failure of cartesian closure was shown in \cite{mclarty}, and \cite{fef} at least observes the typing issue involved in stating the Yoneda
Lemma, in its classical form. By implementing Specker's T-operation, as an endofunctor, we provide stratified analogues of
both. 

Lawvere has referred to toposes as ``variable'' set theories,\footnote{A notion inspired by the use of Grothendieck's eponymous use of toposes of sheaves, in the study of algebraic geometry.} with \textbf{Set} providing the canonical example of an elementary topos - the \textit{stationary} topos of sheaves over the trivial space. We should not expect $\cat{N}$ to form a topos, as the notion was developed in the context of unstratified set theory. We can, however, investigate the fundamental \textit{categorical} properties of a model of sets for a stratified theory. We provide a preliminary definition of an \textit{NF-Topos}, motivated by KF and NF.\footnote{In acknowledgment of their colleague, professor, and friend's fundamental contribution to Topos Theory, the authors refer to such an object as an \textbf{SPE} - \textit{stratified pseudo elephant}.} 

Related to the T-operation, strongly cantorian sets have long been considered the ``small'' sets in NF folklore. We show the category of
strongly cantorian sets does, in fact, form a topos. Beyond this, we initiate the use of Algebraic Set Theory, in the context of NF, to examine the appropriate \textit{categorical} notion of smallness. As is typical of category theory, the smallness condition is one placed on maps. Our ``small'' maps are \textit{fibrewise} strongly cantorian. While it is a theorem of NFU + Choice that these form a class of small maps, in the sense of AST, working in NF requires a new axiom, \textbf{SCU}. 

As with any paper that attempts to bridge the gap between largely disjoint areas of study, the reader is likely to encounter the twin frustrations of results that are too elementary and ones that require background not covered within the paper. We make every effort to alleviate these frustrations. In the end, however, length dictates that we reference out much of the required knowledge of category theory. The relative lack of introductory material on NF leads to its larger role within our exposition.

The paper is organized as follows: We provide a brief overview of stratified set theories, and work out the basic properties of the
categories of KF and NF sets. This motivates a speculative definition and study of a stratified topos, which we call an
\textit{SPE}. The abstract definition is motivated by our study of $\cat{N}$, and we proceed by proving their manifestations set theoretically in NF. The following section focuses on the internal category theory of $\cat{N}$. In the final section, we examine a modified notions of smallness in NF, which leads to desirable properties for both category and set theory.

\section{Background}
\subsection{New Foundations Set Theory}
There are two textbooks on New Foundations\footnote{A third book, due to Randall Holmes, introducing NFU (NF + Urelemente) is also an excellent resource}: \cite{forster2} is the only modern textbook, containing both introductory material and an advanced survey; \cite{lfm} is quite old, but remains an excellent, methodical introduction to the theory.

Before introducing the axioms of NF set theory, we must first define an
appropriate notion of \emph{stratification}. A \emph{stratification} of a
formula, in the language of set theory, is a function $\sigma$ from the (not necessarily free) variables of the
formula to the natural numbers satisfying:

i) Every occurrence of a given variable $v_i$ has the same value under $\sigma$

ii) If $v_i$ and $v_j$ appear in the context $v_i = v_j$, $\sigma (v_i) = \sigma
(v_j)$

iii) If $v_i$ and $v_j$ appear in the context $v_i \in v_j$, $\sigma (v_i) + 1 =
\sigma (v_j)$.

The \emph{stratifiable} formulae of (untyped) set theory are precisely the well-formed formulae of Russell's Simple Theory of Types (TST), dropping the typing indices.

NF can be axiomatised as extensionality, plus stratifiable
instances of the comprehension scheme $$ \forall \overrightarrow{x} \exists y
\forall z (z \in y \leftrightarrow \Phi(\overrightarrow{x},z))$$ with $y$ not
occurring free in $\Phi$. The mantra of NF could be: restrict by complexity, rather than size. While NF admits the largest set, $V = \{x | x = x \}$, it avoids Russell's paradox, as the formula expressing self-containment, $ x \notin x$, is unstratified. The restriction to stratified comprehension also allows NF to admit large sets like $On$, the set of all ordinals, without forming the paradox of Burali-Forti. 

Implementation of ordered pairs is a benign component of unstratified theories. For stratified theory, however, the type $\langle x,y \rangle$ receives, relative to `$x$' and `$y$' in a given formula is critical. By refuting Choice in NF, Specker proved the axiom of infinity in NF.\cite{specker} This allowed Quine to implement a type-level, surjective pairing function, which we utilize extensively. Regardless of implementation, however, one is not able to define evaluation (currying) as a function of NF.

\subsubsection{Results of Interest in NF}
Specker's proof of the failure of Choice and, as a direct corollary, the existence of an infinite set is probably the most well-known result of NF. Although his ``Dualitat'' is possibly the most beautiful, proving equiconsistency between NF and TST + Ambiguity \cite{specker2}.

Another side effect of working in NF is the absence of Replacement. This requires one to consider functions which are ``setlike.''\footnote{Permutations (possibly external) of the universe are an important component of the model theory of NF(U) and have a deep semantic connection to stratified formulae. The reader is referred to \cite{forster2}. }  

\begin{mydef}
	For some function $f$, we define $f``$ as $f$ ``acting one level down," thus $f``X = \{f(x)| x \in X \}$. For $f(x)$ we sometimes write $f`x$, when it is helpful to do so. $f``$ is often written $j(f)$ (for $j$ump), allowing one to generalize to $j^n(f)$, the function obtained by applying $f$ $n$ levels down.
\end{mydef}

If, despite the lack of replacement, $f``x$ always
exists, for some class function $f$, we say that $f$ is $1$-\textbf{setlike}. We say $f$ is $n$-\textbf{setlike} if $j(n)(f)`x$ exists for all $x$. If this holds for all $n$, $f$ is said to be \textbf{setlike}.

\begin{mydef}
	To denote the singleton of a set $x$, $\{x\}$, we write $\iota`x$, with $\iota``x$ denoting the set of singletons of elements of $x$, $\{\{y\}| y \in x \}$.
\end{mydef}

Regardless of implementation, a minimal requirement of ordered pairs is that $\langle x,y \rangle$ is such that $x$ and $y$ are type-level. If this is not the case, we are not able to compose relations. As a result, $\iota$ is not a function of NF, as $x$ and $\{x\}$ cannot receive the same type in a stratified formula.

\begin{mydef}
	We say that a set $x$ is \textbf{cantorian} if $|x| = | \iota `` x |$. 
\end{mydef}

\begin{mydef}
	A set is said to be \textbf{strongly cantorian} if the graph of $\iota
	\restric x$ is a set (hence, witnessing $Can(x)$).
\end{mydef}

The use of ``cantorian'' is motivated by another key result of NF, the stratified version of Cantor's theorem. While the diagonalization property is unstratified, we can invoke the external(!) bijection between $\iota``x$ and $x$ to state a stratified version, and modify Cantor's Theorem to prove: $\iota``x$ is strictly smaller than $\mathcal{P}X$. An immediate corollary of this result is the proof that the set of
singletons of elements of $V$ is strictly smaller than $V$ (as $V$ is its own powerset). Corollary to this is a key property of $\cat{N}$, the category of NF sets: the failure of the global sections functor $\cat{N}(1,-)$ to be essentially surjective.

$\cat{N}(1,-)$ is isomorphic to $\iota``$, viewed as functor. We refer to this as the $T-functor$, acknowledging its connection to Specker's T-operation.

\begin{mydef}
The \textbf{T-functor} is defined by its action on objects, $ x \mapsto \iota``x$, and its action on maps, $f:A \to B \mapsto Tf:TA \to TB$ defined as $\langle \{a\},\{b\}\rangle \in Tf$ where $\langle a,b \rangle$ is in $f$.
\end{mydef}

As with the proof of NF's version of Cantor's Theorem, the external bijection between $\iota``x$ and $x$ allows one to ``type-raise'' $x$ in a given formula that would otherwise be unstratified.\footnote{In the category theory of stratified sets, this manifests as the ubiquity of \emph{T-relative adjunctions}.} In particular, if $x$ is the same size as a set of singletons, $\iota``\bigcup x = x$. We introduce two axioms, which aim to exploit this.

\begin{mydef}
\textbf{IO} is the principle that every set is the same size as a set of singletons.
\end{mydef} 

While it clearly fails in full NF, IO is much weaker than the claim that every set is (strongly) cantorian. We later prove that the stronger assumption, added to the axioms of KF, yields Mac Lane set theory.

\begin{mydef}
\textbf{CE} is the principle that a family of pairwise disjoint sets is the same size as a set of singletons.
\end{mydef}

The CE principle allows us to deal with typing issues that arise in the construction of coequalizers. An immediate consequence is that any partition is the same size as a set of singletons.

\subsubsection{KF}

Kaye-Forster set theory is another stratified set theory examined in this
paper, and developed in \cite{KF}. A category theorist should be familiar with Mac Lane set theory (Zermelo
with $\Delta_0$-separation) as a natural set theoretic language for toposes. KF
is simply Mac Lane set theory with stratified $\Delta_0$-separation. KF is an interesting set
theory in which to work, for us, as both ZF and NF are extensions of it. NF is, in fact, simply KF + $\exists x \forall y (y \in x)$. 

\subsection{Category Theory}

The reader should be able to reference any introductory book on category theory, for questions related to its use in this paper. Where material is more obscure, we introduce in line with its use. Before moving to section 3, it is necessary to define a generalisation of adjoint functors, due to Ulmer.\cite{ulmer} A relative adjunction involves three categories and three functors,
\[
\begin{array}{c}
J:\mathscr E\to\mathscr D; F:\mathscr E\to\mathscr C; G:\mathscr C\to\mathscr D
\end{array}
\]

such that one has a correspondence of either of the following forms:
\[
\begin{array}{cc}
\dfrac{FA\to B}{JA\to GB} & \textrm{ written }F\;_J\negthickspace\dashv G\\
\,\\
\dfrac{B\to FA}{GB\to JA} & \textrm{ written }G\dashv_{J}F
\end{array}.
\]
In the former case we say that $F$ is a $J$\emph{-left adjoint to $G$}; in the
latter, that $F$ is $J$\emph{-right adjoint to $G$}. Clearly where
$J=id_{\mathscr E}=id_{\mathscr D}$, we recover an ordinary adjunction. It is
important to note that while adjunctions have both a unit ($\eta:id_{\mathscr
	D}\to GF$) and a counit ($\varepsilon:FG\to id_{\mathscr C}$), relative
adjunctions will generally have only one or the other (in this case $\zeta:J\to
GF$ or $\theta:GF\to J$).

\section{$\mathbf{N}$: The Category of NF Sets as an ``Almost Topos''}

In this section we provide a framework for studying $\cat{N}$ as a category of sets. Much of what we find (T-relative adjunctions, in place of standard adjunctions) that seems pathological, will be made to seem more natural in the following section on the internal category theory of NF. In the more general setting of an SPE, defined below, we simply require $T$ to be an endofunctor satisfying certain properties. In the remainder of this section, we prove the instance of $T$ in $\cat{N}$, defined earlier, satisfies these more abstract condition. We begin with the most basic categorical properties of stratified set theories (true in KF, as well as NF), and proceed to properties of greater complexity. We provide greater detail on relative adjunctions in line with our proof of pseudo cartesian closure in $\cat{N}$.

We define an SPE to generalize both a topos and the category of NF sets; in
particular, the characterization will make no appeal to the existence of a
``universe'' object, and will specialize to an NF-like category when one exists.

An SPE is a category $\mathscr{C}$ such that:
\begin{enumerate}
\item $\mathscr{C}$ is a regular category with finite coproducts and a subobject
  classifier,%
 
\item there is a full embedding $T:\mathscr{C}\to\mathscr{C}$ which creates
  finite limits%
  \footnote{We use the notion of creation of limits where for a diagram $F:\cat
    I\to\mathscr{C}$, a limit of $F$ exists whenever there is a limit of $T\circ
    F$, and $T$ preserves and reflects limits. This is somewhat weaker than the
    notion in \cite{Mac}, in that it doesn't require that all limiting cones lift.},
\item there is a bifunctor $\imp:\mathscr{C}^{op}\times\mathscr{C}\to\mathscr{C}$
with the following properties:

\begin{enumerate}
\item there is a natural isomorphism $i_{A}:TA\cong1\imp A$,
\item there is an extranatural transformation $\alpha_{A}:1\to A\imp A$,
\item there is a transformation $\beta_{A,B,X}:TA\imp TB\to(X\imp A)\imp(X\imp B)$
natural in $A,B$, extranatural in $X$,
\item there is a natural isomorphism $e_{A,B}:(TA\imp TB)\cong T(A\imp B)$,
\item and certain identities between these transformations hold%
  \footnote{We will not prove all of the extranaturality conditions for exponentiation, which are adjustments of the axioms for a closed category. It should become clear that it is an easy exercise  for a reader interested in working with stratification.%
  }.
\end{enumerate}
\item For all $f:A\to B$ there is a functor
  $\tilde{\Pi}_{f}:\mathscr{C}/A\to\mathscr{C}/TB$ for which
  $f^{*}\;_{T_{B}}\negthickspace\negmedspace\dashv\tilde{\Pi}_{f}$ (where
  $T_{B}:\mathscr{C}/B\to\mathscr{C}/TB$ is the obvious functor induced by $T$).
\item The functor $\mathsf{Sub}(TA\times-)$ is representable for any $A$%
  \footnote{Nathan Bowler should be acknowledged for pointing out this feature of NF to the authors.}
\end{enumerate}

\subsection{Basic Properties of Categories of Stratified Sets}
\subsubsection{Pairs, Finite Completeness, Finite Coproducts, and the Subobject Classifier}

Certain properties of the categories of KF and NF sets are independent of the implementation, but many are not. As stated above, the minimal requirement for a reasonable pairing function is that `$x$' and `$y$' receive the same type in $\langle x,y \rangle$. In full NF, we can implement Quine ordered pairs, where $\langle x,y \rangle$ receives the same type as both `$x$' and `$y$'.\cite{quine} In KF, however, such an implementation requires the existence of a Dedekind-infinite set.

Even in KF, one can prove certain straightforward properties of the category of sets. These are summarized in the following theorem:

\begin{thm}\
	
	\begin{enumerate}
		\item The category $\cat{K}$ of KF sets has an initial and terminal object, all equalizers, and a subobject classifier.
		
		\item If $\cat{K}$ is a model of KF + Inf, $\cat{K}$ is finitely complete and has finite coproducts.
	\end{enumerate}
	
\end{thm}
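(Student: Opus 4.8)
The plan is to take the objects of $\cat{K}$ to be KF-sets and the morphisms to be functions whose graph is a set, and to exhibit each universal object explicitly; the only real work is checking that the graphs of the structure maps arise from a \emph{stratified} instance of $\Delta_0$-separation. For part (1) I would work throughout with Kuratowski pairs, which are available in any model of KF. The empty set is initial, with the empty function as its unique outgoing map. A singleton $1=\{\emptyset\}$ is terminal: the unique map $!_A\colon A\to 1$ has graph $\{\langle a,\emptyset\rangle : a\in A\}$, and since a Kuratowski pair raises type by two, the defining condition $\exists a\in A\,(p=\langle a,\emptyset\rangle)$ is stratified and $\Delta_0$, so the graph exists by separation (from $A\times 1$, say). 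For equalizers of $f,g\colon A\to B$ I would take the subset $E=\{a\in A : f(a)=g(a)\}$ with its inclusion; the point is that $f(a)=g(a)$ can be written with the witnessing quantifier bounded by $B$, keeping the instance $\Delta_0$, while the inclusion's graph is the diagonal $\{\langle a,a\rangle : a\in E\}$, which is unproblematically stratified.

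For the subobject classifier I would take $\Omega=\{0,1\}$ with $\mathrm{true}\colon 1\to\Omega$. Given a mono $m\colon S\inj A$ with image $S'\subseteq A$, its characteristic map $\chi\colon A\to\Omega$ has a graph separated out of $A\times\Omega$; here it matters that the \emph{set} $A\times\Omega$ exists even with Kuratowski pairs, since it sits inside $\mathcal P\mathcal P(A\cup\Omega)$ and the comprehension $\exists a\in A\,\exists t\in\Omega\,(p=\langle a,t\rangle)$ is stratified and $\Delta_0$. The pullback condition is then routine. The recurring obstacle in part (1) is purely bookkeeping: every separation must be simultaneously bounded and stratified, so wherever a function value appears one must bound the witnessing quantifier by the codomain.

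For part (2) the decisive input is that a model of KF + Inf provides the Dedekind-infinite set needed for Quine's type-\emph{level} pairing function; I would switch to it for the remainder. Binary products are then the usual $A\times B$ with projections $\pi_1,\pi_2$, and finite completeness follows from the terminal object and equalizers of part (1) together with binary products. Finite coproducts come from the initial object of part (1) together with the tagged disjoint union $A+B$ and its injections. In every case the underlying set already exists in plain KF; what is missing are the \emph{arrows}, and this is exactly where the hypothesis bites and is the main obstacle to make precise. With Kuratowski pairs the graph of a projection, $\{\langle\langle a,b\rangle,a\rangle : a\in A,\,b\in B\}$, is \emph{unstratifiable}: the outer pair would force $\langle a,b\rangle$ and $a$ to share a type, while the inner pair has already raised the type of $\langle a,b\rangle$ above that of $a$. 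With a type-level pair both components sit at the same type and the graph becomes a legitimate stratified $\Delta_0$ instance; the analogous computation for the coproduct injections $a\mapsto\langle 0,a\rangle$ identifies the same obstruction and resolves it the same way. Once the structure maps are seen to be set functions, the universal properties are inherited by the familiar set-theoretic verifications.
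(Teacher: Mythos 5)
Your proposal is correct and follows essentially the same route as the paper: empty set and singleton for initial/terminal, the stratified set abstract $\{a\in A : f(a)=g(a)\}$ for equalizers, $2$ with the inclusion of true as subobject classifier, and the observation that part (2) hinges on Inf supplying a Dedekind-infinite set and hence Quine's type-level pairs, without which the graphs of the projections and coproduct injections are unstratifiable. Your explicit bookkeeping of the $\Delta_0$-bounding of witnesses and the precise identification of where Kuratowski pairs break stratification for $\pi_1$ is more detailed than the paper's proof but is the same argument.
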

\begin{proof}
	The existence of an initial and terminal object is trivial (the empty set and any singleton set will do). For any two functions $f$ and $g$, the graphs of which are sets in KF, $\{x| f(x)=g(x)\}$ is a stratified set abstract, of the same type as the domain and codomain of $f$ and $g$. The subobject classifier is just the $2 = \{\bot,\top\}$ with the specified inclusion of $\{\top\}$ as \emph{true}.
	
	The second part requires type-level ordered pairs, to prove the existence of the sets defining the graphs of the projection ($\pi_A: A\times B \to A$; $\langle a,b \rangle \mapsto a$) and inclusion ($v_A:A \to A \sqcup B$) functions. We prove that the existence of a Dedekind-infinite set suffices for Quine Pairs in KF, as the following lemma.
\end{proof}

\begin{lem}
Let $T$ be a theory extending KF.  If $T$ proves the existence of a
Dedekind-infinite set then there is a flat pairing function
definable with the Dedekind-infinite set as a parameter.

This pairing function supports the existence not only of
$\tuple{x,y}$ for all $x$ and $y$ but also $x \times y$, $x \to y$
and inverses (locally) to all these constructs.

\end{lem}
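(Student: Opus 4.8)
The plan is to reconstruct the Quine--Rosser flat pairing function, but driven by an arbitrary Dedekind-infinite set rather than by the Frege naturals, and then to read off $\times$, $\to$ and their local inverses using the stratified $\Delta_{0}$-separation available in KF. First I would unpack ``Dedekind-infinite'': from the hypothesis I extract a set $D$, an injection $f\colon D\to D$, and a distinguished point $d_{0}\in D\setminus f``D$ (which exists precisely because $f$ fails to be surjective). The whole construction carries $D,f,d_{0}$ as parameters, exactly as the statement permits. The one point that needs care here, and which I would emphasise, is that although $f$ is presented by a Kuratowski graph (type-raising by two), $f$ \emph{acts} type-level: in the atomic context ``$f(n)=m$'' the variables $n$ and $m$ must receive the same type, the type-raising being absorbed into the type of the graph. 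This is what makes the following shift operation type-level.

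Next I would define the shift $s(x)=(f``(x\cap D))\cup(x\setminus D)$ and check it is a type-level injection whose inverse $s^{-1}(w)=(f^{-1}``(w\cap D))\cup(w\setminus D)$ is likewise $\Delta_{0}$-definable from the parameters. Using the freed point $d_{0}$ I then define two injections with \emph{disjoint} ranges,
\[
\sigma(a)=\{d_{0}\}\cup s(a),\qquad \tau(b)=s(b),
\]
noting that $d_{0}\in\sigma(a)$ always while $d_{0}\notin\tau(b)$ always (since $d_{0}\notin f``D$ and $d_{0}\in D$). The flat pair is then $\tuple{a,b}=\sigma``a\cup\tau``b$. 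I would verify that each operation above is type-level, so that $\tuple{a,b}$ receives the same type as $a$ and $b$, and that the two pieces cannot collide; recovery is immediate by splitting on membership of $d_{0}$, giving projections $\fst(p)=s^{-1}``\{u\setminus\{d_{0}\}:u\in p,\ d_{0}\in u\}$ and $\snd(p)=s^{-1}``\{v\in p:d_{0}\notin v\}$. These are the type-level inverses to pairing.

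With a flat pair in hand, $x\times y$ and $x\to y$ follow by separation. For the product I would bound $\tuple{a,b}\subseteq W:=\sigma``\bigcup x\cup\tau``\bigcup y$ for all $a\in x$, $b\in y$, so that $x\times y=\{p\in\sc{W}:\exists a\in x\,\exists b\in y\,(p=\tuple{a,b})\}$ is carved from $\sc{W}$ by stratified $\Delta_{0}$-separation; a type count shows $x\times y$ sits at the same type as $x$ and $y$ (a type-level product). The function space $x\to y$ is then separated out of $\sc{x\times y}$ as the set of functional, total subsets; here a type count shows it sits one type \emph{above} $x$ and $y$. The projections $x\times y\to x,\ y$ and the local evaluation map are obtained as sets by restricting $\fst,\snd$ and application to the relevant domains; I would note that genuine (global) currying is unavailable for the familiar stratification reason flagged in the introduction, which is exactly why the statement hedges with ``locally''.

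The main obstacle is not the cleverness of the pair -- that is the classical room-making trick -- but the bookkeeping required to stay inside KF: I must check that ``$p=\tuple{a,b}$'' is both stratified and $\Delta_{0}$ (all quantifiers bounded by $p,a,b,D$ and the parameters $f,f^{-1},d_{0}$), so that the instances of separation producing $x\times y$ and $x\to y$ are legitimate, and that each operation is applied at a single type so that the parameter graphs need only one type assignment. The delicate sub-point, on which type-levelness of the pair ultimately rests, is the observation above that the Dedekind injection acts type-level; once that is granted the remaining verifications are routine stratification checks.
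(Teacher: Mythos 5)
Your construction is correct and is essentially the paper's approach: the paper implements $\mathbb{N}$ inside the Dedekind-infinite set (as the least inductive subset containing a point outside the range of the injection) and then invokes the standard Quine flat-pair construction, citing the literature for details, whereas you drive the Quine--Rosser shift directly by the Dedekind injection $f$ and the freed point $d_{0}$ without first extracting $\mathbb{N}$. The underlying trick is identical; you have merely written out the stratification and bounded-separation bookkeeping that the paper delegates to its reference.
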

\begin{proof}

	Suppose we have a Dedekind-infinite set, $X$ with an injection $f:X
	\inj X$ which is not surjective.  Then we have an implementation of
	$\Nn$ as follows.  Let $x \in X$ be anything not in the range of $f$
	and consider $\bigcap\{Y \subseteq X| x \in Y \wedge f``Y \subseteq
	Y\}$.  This object serves as $\Nn$, $x$ serves as $0$ and $f$ is
	successor.
	
	From here one can simply define the usual Quine pairing functions, as in NF. See \cite{forster} for the details.
	
\end{proof}

The axiom of infinity is a theorem of NF, and the implementation of a natural numbers object implies the existence of type-level pairs, as defined by Quine. In similar theories, such as NFU, the two have equivalent consistency strength. The situation in KF is more complex, as we have separation only for $\Delta_0$
formul{\ae}.

If we assume IO, then finite products and coproducts exist ``locally''. That is to say, if $A$ and $B$ are subsets of some large set $U$ we can form an
object that behaves like the product of $A$ and $B$, in the sense that the
{[}graphs of the{]} two projection functions are sets. This we do as follows. By
two applications of IO there will be a map $f$, defined on $U$, s.t. $(\forall
x\in U)(\exists y)(f(x)=\{\{y\}\})$.  The product of $A$ and $B$ (local to
$U$) will now be the set $\{\tuple{\bigcup^{2}(f(a)),\bigcup^{2}(f(b))}:a\in A\
\wedge\ b\in B\}$, where the pairs are Wiener-Kuratowski. This last fact ensures
that the two functions $\tuple{\bigcup^{2}(f(a)),\bigcup^{2}(f(b))}\mapsto a$ and
$\tuple{\bigcup^{2}(f(a)),\bigcup^{2}(f(b))}\mapsto b$ are defined by stratifiable
set abstracts.  Coproducts (disjoint unions) yield the same treatment.

\subsubsection{Coequalisers and Regularity}

Coequalisers are slightly more complicated.\label{sub:Coeq} In a standard set theory, the coequalizer $h:B \to C$ of arrows $f$ and $g:A \to B$, is formed as the quotient of $C$ by $\simeq$, the
$\subseteq$-least equivalence relation extending $$\{\tuple{b,b'}|(\exists a\in
A)(b=f(a)\wedge b'=g(a))\},$$ by taking $C$ to be $B/\simeq$, and $h$ to be
$\lambda_x(\imath y)(x\in y)$. To see why this strategy will not work in NF, we
need the following standard fact, provable in KF.
\begin{lem}
  \label{PTJ} For every set $B$ and any partition of $B$ we can find a set $A$
  and two maps $f,g:A\to B$ such that the partition is the set of equivalence
  classes of members of $B$ under the $\subseteq$-least equivalence relation
  extending $\{\tuple{b,b'}|(\exists a\in A)(b=f(a)\wedge b'=g(a))\}$.
\end{lem}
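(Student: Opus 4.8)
The plan is to realise the given partition as the equivalence relation generated by the most economical possible parallel pair, namely the two projections out of the equivalence relation itself. Given a partition of $B$, let $\sim$ be the equivalence relation whose classes are exactly the blocks, and set
$$E = \{\tuple{b,b'} : (\exists p)(p\ \textrm{a block}\ \wedge\ b \in p\ \wedge\ b' \in p)\}.$$
First I would check that $E$ is a legitimate object. Writing the blocks as ranging over the partition set $P$, the matrix $b \in p \wedge b' \in p$ is stratified (assigning $b,b'$ a common type and $p$ one higher), so $E$ is a stratified set abstract and hence a set. Since $P$ is a partition, a routine check shows $E$ is reflexive, symmetric and transitive, and that each element lies in exactly one block, so the equivalence classes of $E$ are precisely the blocks of $P$.

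Next I would take $A := E$ together with the two projection maps $f := \fst$ and $g := \snd$ restricted to $E$. By construction the relation
$$\{\tuple{b,b'} : (\exists a \in A)(b = f(a) \wedge b' = g(a))\} = \{\tuple{\fst(e),\snd(e)} : e \in E\}$$
is literally $E$ itself. Because $E$ is already an equivalence relation and any equivalence relation extending it must contain it, $E$ is its own $\subseteq$-least equivalence extension; hence the equivalence relation appearing in the statement is $E$, and its classes are the blocks of the partition, as required.

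The only genuine content here is stratificational, and this is where I expect the one point of care. For $f$ and $g$ to count as maps their graphs $\{\tuple{e,\fst(e)} : e \in E\}$ and $\{\tuple{e,\snd(e)} : e \in E\}$ must be stratified set abstracts, which forces the ordered pair $\tuple{e,\fst(e)}$ to place $e$ and its first coordinate at the same type. This is available precisely because we are using a type-level pairing function (Quine pairs in NF, or those supplied in KF by a Dedekind-infinite set via the previous lemma); with Wiener--Kuratowski pairs the projections would drop type and the construction would fail. This is itself the instructive point: the same type-levelness that makes $E$, $f$ and $g$ legitimate is what then forces the quotient $B/{\sim}$ up a type, and it is that upward shift, rather than anything about $A$, $f$, $g$, which obstructs coequalisers in $\cat{N}$ and motivates the CE principle.
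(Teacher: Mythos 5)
Your proposal is correct and takes essentially the same approach as the paper: the paper's one-line proof likewise takes $A$ to be the equivalence relation $\{\tuple{x,y}\mid \exists b\in B'(x\in b\wedge y\in b)\}$ induced by the partition, with $f,g$ the two projections. Your added remarks on stratification and the need for type-level pairs are accurate elaborations rather than a departure from the paper's argument.
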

\begin{proof}
  Given a partition $B'$ of $B$, form the (equivalence) relation $\{\tuple{x,y}|
  \exists b\in B'(x\in b\,\wedge \, y\in b)\}$ and take this to be our
  $A$. The pair of morphisms needed will just be the projections $A\to B$.
\end{proof} 
\begin{lem}
  NF refutes that every pair of projections in the above situation can be
  coequalised by the quotient set equipped with the morphism $\lambda_x(\imath
  y)(x\in y)$.\end{lem}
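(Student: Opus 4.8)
The plan is to refute the universal claim by producing a single partition for which the proposed coequaliser fails, and the cleanest choice is the partition of the universe $V$ into singletons. First I would apply Lemma \ref{PTJ} to $B = V$ with the partition $\iota``V = \{\{x\} : x \in V\}$, obtaining a relation $A$ together with the two projections $f,g : A \to B$. Because every block here is a singleton, the clause $x \in b \wedge y \in b$ (with $b$ a block) holds only when $x = y$; hence $A$ is the diagonal, $f = g = \mathrm{id}_V$, and the $\subseteq$-least equivalence relation $\simeq$ extending the image relation is just equality.

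With this in hand I would compute the proposed coequaliser explicitly. Since $\simeq$ is equality, the quotient $B/\simeq$ is precisely $\iota``V$, and for each $x$ the unique block $y$ with $x \in y$ is $\{x\}$. Thus the candidate morphism $h = \lambda_x(\imath y)(x \in y)$ is the assignment $x \mapsto \{x\}$, i.e. $h = \iota$ itself. The key observation is then that $\iota$ is not an arrow of $\cat{N}$ at all, because its graph is not a set: in the defining condition $x \in y$ the variable $y$ is forced one type above $x$, so $h$ raises type by one and cannot be captured by any stratified set abstract under type-level pairing. To convert this typing clash into an outright contradiction I would note that a set graph for $\iota \restric V$ is exactly a witness that $V$ is strongly cantorian, hence cantorian, giving $|V| = |\iota``V|$; but the stratified Cantor theorem recorded above yields $|\iota``V| < |\mathcal{P}V| = |V|$ (using $\mathcal{P}V = V$), a contradiction. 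Therefore $h$ is not a legitimate morphism, and the quotient set equipped with $\lambda_x(\imath y)(x \in y)$ cannot coequalise $f$ and $g$.

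The main obstacle is conceptual rather than computational: one must locate precisely where stratification breaks and argue at the right level. It is not enough to observe that the \naive{} abstract for $h$ is unstratified, since that shows only that this particular definition is unavailable; the cardinality argument via $|\iota``V| < |V|$ is what shows that \emph{no} set whatsoever can serve as the graph of $h$. I would also remark that the same type-raising in $\lambda_x(\imath y)(x \in y)$ afflicts an arbitrary partition, so the singleton case is not a degenerate accident but the sharpest instance of a general phenomenon.
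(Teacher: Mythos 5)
Your proposal is correct, but it reaches the contradiction by a genuinely different route from the paper. The paper instantiates Lemma \ref{PTJ} with $B$ the set of all wellorderings and $A$ the order-isomorphism relation, so that the quotient is $NO$; it then shows that if $c=\lambda_x(\imath y)(x\in y):B\to NO$ were a set, $j^2(c)$ would be the singleton function on $NO$, and derives the Burali-Forti paradox. You instead take the partition of $V$ into singletons, observe that the induced relation is the diagonal and the candidate map is literally $\iota\restric V$, and kill it with the stratified Cantor theorem: a set graph for $\iota\restric V$ would give $|V|=|\iota``V|<|\mathcal{P}V|=|V|$. Both are valid one-counterexample refutations of the universal claim. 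Your version is more elementary, relying only on material already established in the background section, and you are right (and arguably more careful than the paper's closing sentence) to insist that unstratifiability of the defining abstract is not by itself a proof of nonexistence --- the cardinality argument is what rules out \emph{any} set serving as the graph. What the paper's choice buys is continuity with the rest of the text: the wellordering example is reused for the Burali-Forti argument on page \pageref{bfort} and directly motivates the subsequent corollary relating membership morphisms $\bigcup A\to A$ to strong cantorianness of $A$, which your degenerate (singleton-blocks) partition does not illustrate as vividly. One cosmetic remark: in your example $f=g$, so the pair does in fact have a coequaliser (namely $id_V$); this is harmless, since the lemma asserts only that the \emph{specific} quotient-plus-$\lambda_x(\imath y)(x\in y)$ recipe fails, but it is worth flagging so a reader does not misread your counterexample as showing that no coequaliser exists.
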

\begin{proof}
  Let $B$ be the set of all wellorderings and $A$ be
$$\left\{\tuple{x,y}|x,y\in B\,.\, x\textrm{ is order-isomorphic to
  }y\right\}.$$ Then the quotient of $A$'s projections is just $NO$, the ordinal
  numbers. If $c:=\lambda x.(\imath y)(x\in y):B\to NO$ is a set, then then so
  is the function $j^2(c):NO\to\varusc{NO}$. But because each ordinal would be
  the sole value of its members under $c$, this means that $j^2(c)$ is simply
  the singleton function. But, by arguments the reader can find in \cite{lfm},
  such a singleton function would allow one to prove the Burali-Forti paradox;
  we present a similar argument in more detail on page \pageref{bfort}. Since
  the defining condition for such a function is unstratifiable, we have
  disproved its existence.\end{proof}
\begin{cor}
  If $A$ is a family of non-empty, disjoint sets, then the existence of a
  membership morphism $\bigcup A\to A$ implies that $A$ is strongly
  cantorian.\end{cor}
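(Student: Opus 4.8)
The plan is to read this off from the two preceding lemmas as their positive counterpart: there we observed that a membership morphism on a partition cannot, in general, be a set, because its jump would be a singleton function; here we run the very same computation forwards, as an implication. Write $c$ for the membership morphism $\bigcup A\to A$, that is, the function $\lambda x.(\imath y)(x\in y)$ carrying each point of $\bigcup A$ to the block of $A$ in which it lies. Because the members of $A$ are pairwise disjoint this block is unique, so $c$ is genuinely single-valued, and because they are non-empty every block is the value of at least one point.

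First I would note that $c$ is constant on each block: if $a\in A$ and $x\in a$ then $c`x=a$, whence $c``a=\{a\}$, a genuine singleton by non-emptiness of $a$. Since every $a\in A$ is a subset of $\bigcup A$ we have $A\subseteq\mathcal{P}(\bigcup A)=\mathrm{dom}(c``)$, and restricting the jump $j(c)=c``$ to $A$ gives
\[
j(c)\restric A=\{\tuple{a,c``a}:a\in A\}=\{\tuple{a,\{a\}}:a\in A\}=\iota\restric A .
\]
So the graph of $\iota\restric A$ is literally the jump of $c$ cut down to $A$.

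It then remains only to see that $j(c)$ is a set whenever $c$ is. This is exactly the manoeuvre already used in the lemma above: the graph of $j(c)$ is separated out of that of $c$ by the abstract $\{\tuple{X,Y}:\forall z(z\in Y\bic(\exists x\in X)\,\tuple{x,z}\in c)\}$, and with the graph of $c$ supplied as a parameter this abstract is stratified, the type-level pairing absorbing precisely the one level by which the singleton operation would otherwise raise type. Intersecting the resulting set with the set $A$ keeps it a set, so $\iota\restric A$ has a set as its graph --- which is the definition of $A$ being strongly cantorian.

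The one genuinely delicate point, and the step I would check most carefully, is the stratification bookkeeping just invoked: one must confirm that forming $c``$ out of $c$ spends no extra type, so that the apparent clash in $\tuple{a,\{a\}}$ is cancelled exactly by the hypothesis that $c$ is a set. Everything else is forced --- disjointness makes $c$ a function, and non-emptiness makes $c``a$ the singleton $\{a\}$ rather than the empty set.
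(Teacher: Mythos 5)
Your proof is correct and is precisely the argument the paper intends: the corollary is stated without proof as the positive reading of the preceding lemma, whose computation (pass from $c$ to its jump, observe that disjointness and non-emptiness force $c``a=\{a\}$, and conclude that the jump restricted to $A$ is $\iota\restric A$) is exactly what you carry out. The stratification check you flag does go through --- with type-level pairs and $c$ as a parameter, $X$ and $Y$ receive the same type in the abstract defining $j(c)$ --- so nothing is missing.
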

\begin{thm}
(KF)

(i) ``Every coequaliser diagram can be completed'' is equivalent to

(ii) ``Every set of pairwise disjoint sets is the same size as a set of
  singletons''
\end{thm}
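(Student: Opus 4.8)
The plan is to prove the two implications separately, in each case translating the categorical assertion about coequalisers into the set-theoretic language of partitions, so that the typing obstruction becomes exactly the content of CE. The conceptual point is that the \naive\ quotient map $b\mapsto[b]$, sending an element to its $\simeq$-class, raises type by one, which is precisely the obstruction exhibited in the preceding lemma. CE supplies, for any partition $P$, a set of singletons equinumerous with $P$; since a singleton sits one type above its element, matching $P$ with $\iota``X$ rather than with $X$ itself exactly cancels the unwanted type raise and lets us realise the quotient by a stratified map into $X$, which lives at the type of $B$.

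For (ii)$\Rightarrow$(i), given parallel arrows $f,g\colon A\to B$, I would first form the generating relation $R=\{\tuple{f(a),g(a)}\mid a\in A\}$ and let $\simeq$ be the least equivalence relation on $B$ extending it (this exists in KF by stratified $\Delta_0$-separation from $\mathcal P(B\times B)$), with associated partition $P=B/\simeq$. Applying CE to $P$ yields a bijection $\psi\colon P\to S$ onto a set of singletons $S=\iota``X$, where $X=\bigcup S$ lies at the type of $B$. I would then define the candidate coequaliser $q\colon B\to X$ by setting $q(e)=x$ iff the $\simeq$-class of $e$ is sent by $\psi$ to $\{x\}$. The only real work is checking that the defining abstract $\{\tuple{e,x}\mid\exists b(e\in b\wedge\tuple{b,\{x\}}\in\psi)\}$ is stratified: assigning $e,x$ type $0$, the class $b$ type $1$, and $\psi$ type $2$ is consistent and bounded, so $q$ is a genuine arrow of $\cat K$. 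Finally I would verify $qf=qg$ and the universal property, the factorisation of any coequalising $r\colon B\to Z$ through $q$ being given by the analogously stratified abstract and unique because $q$ is onto.

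For (i)$\Rightarrow$(ii), let $P$ be a set of pairwise disjoint nonempty sets and $B=\bigcup P$, so $P$ is a partition of $B$. By Lemma~\ref{PTJ} I obtain $A$ and projections $f,g\colon A\to B$ whose induced least equivalence relation has exactly $P$ as its set of classes. Hypothesis (i) then furnishes a coequaliser $q\colon B\to C$. The key step is to show the fibres of $q$ are precisely the blocks of $P$: that each fibre is a union of blocks is immediate from $qf=qg$, and to rule out distinct blocks being merged I would use the subobject classifier, testing against the characteristic map $\chi_{b}\colon B\to 2$ of a block $b$ (a legitimate arrow of $\cat K$ with $b$ as a parameter), which is constant on blocks and hence coequalises $f,g$, so factors through $q$ and separates $b$ from every other block. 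With the fibres identified as blocks and $q$ surjective (coequalisers being epi), the assignment $\{c\}\mapsto q^{-1}(c)$ is a type-respecting, stratified bijection $\iota``C\to P$, exhibiting $P$ as the same size as the set of singletons $\iota``C$.

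The main obstacle is twofold and lies entirely in the interaction of stratification with the universal property. First, the delicate bookkeeping that shows the quotient map $q$ and the induced factorisations are stratified (and $\Delta_0$, hence available in KF) set abstracts is where CE does its work and must be carried out exactly. Second, in the forward direction one must pin down that the abstractly given coequaliser really does identify only $\simeq$-related elements; I expect the cleanest route to be the subobject-classifier argument sketched above, since the obvious alternative of exhibiting a separating quotient map by hand is precisely the construction whose existence we are trying to establish, and would be circular.
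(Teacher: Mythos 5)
Your proof is correct and follows essentially the same route as the paper: the same reduction via Lemma \ref{PTJ}, and the same construction $h(b)=\bigcup H([b]_{\simeq})$ in the direction (ii)$\Rightarrow$(i). The only difference is that you explicitly verify, via the characteristic map of a block, that the coequaliser's fibres are exactly the partition blocks --- a detail the paper's proof leaves implicit when it asserts that $j^{2}(h)(\Pi)$ is in 1--1 correspondence with $\Pi$.
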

\begin{proof}

  (ii) $\to$ (i). Let $B$ be the target set in a coequaliser diagram, and let
  $\simeq$ stand for the $\subseteq$-least equivalence relation extending
  $\{\tuple{b,b'}|(\exists a\in A)(b=f(a)\wedge b'=g(a))\}$.  By assumption
  there is a bijection $H$ from the set $\{[b]_{\simeq}|b\in B\}$ of equivalence
  classes to some set $\iota``C$ of singletons. Now
  define $h(b)$ to be $\bigcup H([b]_{\simeq})$.

\smallskip{}

(i) $\to$ (ii). Let $\Pi$ be a set of pairwise disjoint sets, and let
$B=\bigcup\Pi$. By lemma \ref{PTJ} we can find a set $A$ and maps $f,g:A\to B$
such that $\Pi$ is the induced partition. What we want is a set $C$ such that
there exists one element for every equivalence class of $\Pi$ and a function $f$
mapping each element of $B$ to the corresponding element. Note that this doesn't
necessarily mean the correspondence between $\Pi$ and $C$ is a \emph{set}! Such
a $C$ and such a map would constitute a coequaliser, and $j^2(f)(\Pi)$ is a set of
singletons in 1-1 correspondence with $\Pi$.

\end{proof}

It is not evident, whether or not NF proves the existence of arbitrary coequalizers; the result would immediately imply $\cat{N}$ is finitely cocomplete. In $\cat{N}$, cocompleteness is equivalent to the property CE, defined above. 

Clearly even a weakened version of Choice (for partitions) would suffice for one to pick representatives of each equivalence class, allowing us to form the graph which defines the coequalizer:
\begin{cor}
NFU + Choice has coequalizers.
\end{cor}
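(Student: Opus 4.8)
The plan is to exploit the equivalence established in the preceding Theorem: in any extension of KF, ``every coequaliser diagram can be completed'' is equivalent to CE, the assertion that every set of pairwise disjoint sets is the same size as a set of singletons. Since NFU extends KF, it suffices to prove that NFU + Choice establishes CE. I would invoke Choice in its partition form, guaranteeing for every set $\Pi$ of nonempty pairwise disjoint sets a choice set $C\subseteq\bigcup\Pi$ meeting each member of $\Pi$ in exactly one point. The existence of such a $C$ is itself a stratified assertion (the relation ``$x\in C\wedge x\in p$'' with $C$ and $p$ of a common type is unproblematic), so this is a legitimate instance of Choice available in NFU, and it is the only use of Choice the argument requires.

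Given such a $C$, I would exhibit a set bijection between $\Pi$ and the set of singletons $\iota``C$. The map to consider sends each part $p\in\Pi$ to $\{x\}$, where $x$ is the unique element of $C\cap p$; its graph is $\{\tuple{p,\{x\}}:x\in C\wedge x\in p\}$. Because the members of $\Pi$ are pairwise disjoint, distinct parts determine distinct representatives, so this is a bijection onto $\iota``C$, witnessing that $\Pi$ is the same size as a set of singletons. Feeding this back through the Theorem yields the completion of every coequaliser diagram, which is precisely the claim.

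The step deserving the most care --- and the reason Choice alone does not trivialise the problem --- is the stratification of this graph. The \naive{} choice function $p\mapsto x$ with $x\in p$ is not setlike: in a flat pair $\tuple{p,x}$ the two coordinates must share a type, yet $x\in p$ forces $x$ one type below $p$, so its graph is defined by an unstratified abstract. This is exactly the obstruction underlying the earlier Lemma refuting coequalisers in full NF. Replacing the representative $x$ by its singleton $\{x\}$ repairs the typing precisely: in $\tuple{p,\{x\}}$ one has $\sigma(p)=\sigma(\{x\})=\sigma(x)+1$, which is now consistent with $x\in p$. Verifying this single stratification computation is the crux of the proof; the bijection properties and the reduction to CE are then routine.

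Equivalently, one can perform the construction directly rather than through CE: the choice set $C$ serves as the coequaliser object, and the map $h:B\to C$ sending each $b$ to the unique element of $C$ equivalent to it has a stratifiable graph for the same reason, with the universal property following because any morphism coequalising $f$ and $g$ is constant on $\simeq$-classes and $h$ is surjective. I would present the CE route as the main argument, since it reuses the Theorem already in hand, and remark only that the direct construction makes transparent where the chosen singletons restore stratification.
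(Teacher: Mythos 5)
Your proof is correct and matches the paper's (very brief) argument: the paper likewise observes that Choice for partitions lets one pick a representative of each equivalence class and that the resulting graph is a stratifiable set abstract, which is exactly your choice set $C$ together with the map $p\mapsto\{x\}$ (equivalently your direct $h:B\to C$). Routing the argument through the CE equivalence of the preceding theorem and spelling out the stratification of $\tuple{p,\{x\}}$ are just welcome elaborations of what the paper leaves as a one-line remark.
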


We can prove NF has certain coequalisers and, in a similar manner, that all epimorphisms in $\cat{N}$ are regular. 

\begin{lem}
	NF has coequalizers of kernel pairs.
\end{lem}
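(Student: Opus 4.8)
The plan is to identify the coequaliser of a kernel pair with the \emph{image} of the map, thereby sidestepping exactly the representative-choosing step that obstructed general coequalisers. Given $f:A\to B$, its kernel pair is the pullback of $f$ along itself, i.e. the pair of projections $p_1,p_2:K\to A$ where $K=\{\tuple{a,a'}\in A\times A\,|\,f(a)=f(a')\}$; this exists because $\cat{N}$ is finitely complete (NF proves Inf, so by the theorem above $\cat{N}$ has all pullbacks, hence all kernel pairs). My claim is that the coequaliser is the corestriction $h:A\to f``A$ of $f$ onto its image $f``A=\{b\,|\,\exists a\in A\,(\tuple{a,b}\in f)\}\subseteq B$.

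First I would check that $f``A$ is a genuine set and that $h$ coequalises the pair. Because Quine pairs are type-level, the abstract defining $f``A$ is stratified --- $b$ and $a$ receive the same type and $f$ the next one up, so $f``A$ is a subset of $B$ sitting at the very type of $B$. That $h\circ p_1=h\circ p_2$ is then immediate, since both composites send $\tuple{a,a'}\in K$ to $f(a)=f(a')$. For the universal property, let $g:A\to C$ satisfy $g\circ p_1=g\circ p_2$, which unwinds to the fork condition $f(a)=f(a')\imp g(a)=g(a')$. I would define the mediating map by the abstract $u=\{\tuple{b,c}\,|\,\exists a\,(\tuple{a,b}\in f\wedge\tuple{a,c}\in g)\}$: the fork condition makes $u$ single-valued, it is total on $f``A$ by construction, and $u\circ h=g$; uniqueness follows because $h$ is surjective onto $f``A$ and hence epic.

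The crucial --- and genuinely easy --- point is that this $u$ is definable by a \emph{stratified} abstract: with type-level pairs, $a$, $b$, and $c$ all take one type and $f$, $g$ the next, so the displayed condition is stratifiable and $u$ exists as a set. This is precisely where the construction departs from the general case. We never form the set of $\simeq$-equivalence classes of $A$, and so never need the singleton-like selection $\lambda x.(\imath y)(x\in y)$ whose unstratifiability drove the Burali-Forti obstruction of the preceding lemma; because the image $f``A$ already lives at the type of $B$, the coequalising object is handed to us with no type-raising at all. The only things demanding care are therefore the bookkeeping of the stratification of $u$ and the verification of single-valuedness from the fork condition --- once those are in hand, the universal property is routine, and the lemma follows.
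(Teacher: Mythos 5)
Your proposal is correct and is essentially the paper's argument: the paper observes that the set of fibres of $f$ is in bijection with $\iota``(f``A)$, which (via the earlier partition-to-singletons construction) makes the coequalising map the corestriction of $f$ onto its image --- exactly your $h:A\to f``A$. Your version just carries out the universal-property verification explicitly, with the same key point that the image and the mediating map $u$ are given by stratified abstracts, so no choice of representatives is ever needed.
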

\begin{proof}
The basic construction is to show that the collection of fibres, of any function, is the same size as the set of singletons of elements of the image. To a student of NF, this allows us to subvert typing issues. In the language of category theory, we use the isomorphism between the fibres and an object in the image $T$, which is shown to be full and faithful.
\end{proof}

\begin{cor}
	$\cat{N}$, the category of NF sets is a regular category.
\end{cor}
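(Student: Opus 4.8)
The plan is to verify the standard (Barr) characterisation of regularity directly: that $\cat{N}$ is finitely complete, that it has coequalisers of kernel pairs, and that regular epimorphisms are stable under pullback. The first two are already in hand. Finite completeness is the second clause of Theorem~1 together with Specker's theorem that NF proves the axiom of infinity, which supplies the type-level Quine pairs needed for products. Coequalisers of kernel pairs are the content of the preceding lemma. From these two it follows formally that every morphism $f:A\to B$ admits an image factorisation $f = m\circ q$, with $q$ the coequaliser of the kernel pair of $f$ (a regular epimorphism) and $m$ a monomorphism, so no separate argument for the factorisation system is required.

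The one condition left to check is that the pullback of a regular epimorphism is again a regular epimorphism. Here I would first record that in $\cat{N}$ the epimorphisms are exactly the surjections: a non-surjective $f:A\to B$ is separated by the constant map $B\to 2$ at $\top$ and by the characteristic function $\chi_{\mathrm{rng}(f)}:B\to 2$ of its range, both genuine morphisms of $\cat{N}$ since $\mathrm{rng}(f)$ is a set. Combined with the observation, noted above, that every epimorphism of $\cat{N}$ is regular, it therefore suffices to show that surjections are pullback-stable. Given a surjection $f:A\to B$ and any $g:C\to B$, the pullback is the stratified abstract $\{\tuple{a,c}\mid a\in A\,\wedge\,c\in C\,\wedge\,f(a)=g(c)\}$, which is a set because Quine pairs are type-level, so that $a$ and $c$ receive a common type and the equation $f(a)=g(c)$ is stratified with $f,g$ as parameters one type higher. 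The projection $\tuple{a,c}\mapsto c$ is surjective by the usual fibrewise argument — given $c\in C$, surjectivity of $f$ yields some $a$ with $f(a)=g(c)$ — and hence is again a regular epimorphism.

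The main obstacle is not in the corollary but in the lemma it rests on: the genuine difficulty for stratified set theory is that the \naive{} coequaliser construction invokes the (unstratifiable) singleton function, and this is precisely what the T-functor was deployed to circumvent. For the corollary itself the only residual subtlety is sethood of the constructions, and for pullbacks this is unproblematic once type-level pairing is available. Assembling the three verified conditions — finite completeness, coequalisers of kernel pairs, and pullback-stability of regular epimorphisms — yields that $\cat{N}$ is a regular category.
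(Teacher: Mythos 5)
Your proof is correct and follows essentially the route the paper (implicitly) takes: finite completeness, coequalisers of kernel pairs from the preceding lemma, and pullback-stability of regular epimorphisms --- indeed you supply the pullback-stability step (epimorphisms coincide with surjections, and surjections are stable under pullback because the pullback object is a stratified set abstract with type-level pairs) that the paper leaves entirely unstated. One pedantic caveat: monicity of the second factor in the image factorisation is \emph{not} a formal consequence of finite completeness plus coequalisers of kernel pairs alone --- in general it requires pullback-stability, or in $\cat{N}$ the concrete observation that the map induced out of the quotient of $A$ by $f(a)=f(a')$ is injective --- but since you verify all three conditions of the definition anyway, this does not affect the argument.
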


Additionally, a parallel pair of arrows $Tf,Tg:TA\rightrightarrows
TB$ will always have a coequaliser. This is the first example of the ubiquity of \emph{relative} adjoints in the category theory of stratified sets.

\begin{example}
	In the classical case, a category $\mathcal{C}$ has coequalizers precisely when there is a left adjoint to the functor $\Delta$ sending an object $X \in \mathcal{C}$ to the constant functor $\Delta_X:(\cdot\rightrightarrows\cdot)\to \mathcal{C}$. Said left adjoint sends a pair of arrows to their coequalizer.
	
	In $\cat{N}$, this corresponds precisely to the relative adjunction $\overline{G} \;_{T^{\cdot \rightrightarrows \cdot}}\negthickspace \dashv \Delta$, where $\overline{G}$ is the $T^{\cdot \rightrightarrows \cdot}$-left adjoint of $\Delta$, where $T^{\cdot \rightrightarrows \cdot}: \cat{N}^{\cdot \rightrightarrows \cdot} \to \cat{N}^{\cdot \rightrightarrows \cdot}$ is the functor given by post-composition with $T$:
	$$ \cat{N}^1(coeq(Tf,Tg),C) \cong \cat{N}^{\cdot \rightrightarrows \cdot}((Tf,Tg),\Delta_C)$$
\end{example}

\subsection{Pseudo-Cartesian Closure and Local Pseudo-Cartesian Closure}

It is well known that $\cat{N}$ is not cartesian closed. In KF, however, the implication is not inconsistency, but something category theorists know quite well.

\begin{thm} KF + ``The category of sets is cartesian closed'' is equivalent to Mac Lane set theory. \end{thm}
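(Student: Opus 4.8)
The plan is to prove the two implications separately, with the substance lying in the direction from KF together with cartesian closure to full $\Delta_0$-separation. For the easy direction I would argue that Mac Lane set theory proves both halves of the hypothesis. Stratified $\Delta_0$-separation is a special case of $\Delta_0$-separation, so every axiom of KF is a theorem of Mac Lane. Moreover, in Mac Lane the exponential $B^{A}$ may be carved out of $\mathcal P(A\times B)$ by the $\Delta_0$ condition ``is the graph of a function from $A$ to $B$'', and evaluation and currying are then definable by $\Delta_0$ abstracts; this makes $\cat{K}$ cartesian closed (indeed a well-pointed Boolean topos). Hence Mac Lane proves KF + ``the category of sets is cartesian closed.''

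For the converse I would first observe that cartesian closure is in particular cartesian, so binary products with set-valued projections exist; by the argument in the proof of the earlier theorem this forces a type-level pairing function, and by the pairing lemma above such a function is available precisely when a Dedekind-infinite set exists. Thus KF plus cartesian closure proves Inf, and so $\cat{K}$ is finitely complete, has finite coproducts and a subobject classifier $\Omega=2$, and now exponentials --- that is, $\cat{K}$ is an elementary topos. By extensionality the global elements $1\to A$ are exactly the members of $A$, and agreement on them forces equality of maps, so $\cat{K}$ is well-pointed; $\Omega=2$ makes it Boolean, with a natural numbers object supplied by Inf.

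The heart of the proof is then to recover full (unstratified) $\Delta_0$-separation from this topos structure. I would show by induction on the construction of a $\Delta_0$ formula $\phi(x,\vec p)$ that, for any set $a$, the class $\{x\in a:\phi(x,\vec p)\}$ is the extension of a morphism $a\to\Omega$, hence a subobject, hence a set. Atomic formulae $x=p$ and $x\in p$ are handled by the diagonal and by the membership morphism $\mathord{\in}_{A}\colon\Omega^{A}\times A\to\Omega$ obtained from the power object $\Omega^{A}=\mathcal P A$; Boolean combinations use the internal Boolean-algebra structure of $\Omega$; and the bounded quantifiers $\exists y\in x$ and $\forall y\in x$ are realised by the image factorisations of a regular category together with universal quantification along a projection, both of which exist once $\cat{K}$ is a topos. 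It is exactly here that cartesian closure does the work stratification forbids: the evaluation map of $\Omega^{A}$ is the very currying/application operation that, as noted in the introduction, is not a function of NF, and its availability as a set is what licenses separation instances whose defining condition is unstratifiable.

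The main obstacle I anticipate is the variable-bound quantifier step, since it is precisely bounded quantification over the separated variable (as in $\exists y\in x\,(x\in y)$) that lets a $\Delta_0$ formula be unstratified; forming the corresponding subobject by transposition through $\Omega^{A}$ and checking that the resulting map is genuinely a set-morphism is the delicate point, which I would treat carefully while leaving the Boolean and atomic cases as routine. Assembling these instances with the remaining Mac Lane axioms --- extensionality, pairing, union, power set, and the Inf obtained above --- yields Mac Lane set theory, completing the equivalence.
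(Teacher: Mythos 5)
Your overall strategy for the hard direction is genuinely different from the paper's, and in outline it can be made to work, but one step as written is wrong and the decomposition is much heavier than necessary. The paper does not derive $\Delta_0$-separation by an induction through the internal logic of a topos; it uses the identity Mac~$=$~KF~$+$~``every set is strongly cantorian'' and then extracts, from the assumed set-hood of the local \verb#curry# maps, an explicit composite $f_{3}\circ f_{1}\circ f_{2}$ of three stratifiably-defined functions whose net effect is $\{x\}\mapsto\{\{x\}\}$, i.e.\ $\iota\restric\iota``A$, so that every $A$ is strongly cantorian. That is a three-line argument where yours is a Cole--Mitchell--Osius-style soundness induction. Your route is not wasted, though: the crux of your induction is not where you place it (the bounded-quantifier step, which is just images and dual images with an already-constructed relation as a stratified parameter), but the atomic case $x\in y$ for two separated variables. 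The local membership relation $\{\tuple{x,f}\in A\times 2^{A}: f(x)=\top\}$ is exactly the unstratifiable object that cartesian closure hands you as a set via the evaluation arrow, and that observation is morally identical to the paper's use of \verb#curry# to manufacture the unstratifiable singleton function. You should say this explicitly, and you should also verify that the categorical interpretation agrees with real membership (well-pointedness plus the stratified bijection $\sc{A}\cong 2^{A}$), which your sketch takes for granted.

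The concrete error is the claim that KF plus cartesian closure proves Inf. The pairing lemma in the paper is one-directional: a Dedekind-infinite set yields a flat pairing function; it does not assert that products with set-valued projections require one. Indeed the hereditarily finite sets model KF, their category of sets is cartesian closed (it is a well-pointed Boolean topos), and Infinity fails there; so no correct argument of the kind you propose can exist, and the theorem must be read as an equivalence of separation schemes with Infinity either present on both sides or absent on both. Fortunately you do not need Inf anywhere else: finite limits come from cartesian closure together with the equalizers and terminal object KF already provides, and the subobject classifier is $2$, so $\cat{K}$ is a topos without any appeal to an infinite set. Delete the Inf paragraph, drop the NNO claim, and either carry out the atomic-case argument carefully or--- more economically ---replace the whole induction by the paper's reduction to strong cantorianness.
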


\begin{proof}

Mac = KF + ``every set is strongly cantorian''. We prove that the latter is equivalent to cartesian closure.

If the graph of \verb#curry# is, locally, a set, so too is
the graph (call it $f_{1}$) of the function that for each $x\in A$ sends
$(\{\emptyset\}\times\{\emptyset\})\to x$ to $\{\emptyset\}\to(\{\emptyset\}\to
x)$.  Now $\{\emptyset\}\to x$ is one type higher than $x$ so---by stratified
comprehension---the graph (call it $f_{2}$) of the function sending $\{x\}$ to
$(\{\emptyset\}\times\{\emptyset\})\to x$ is a set. By the same token
$\{\emptyset\}\to(\{\emptyset\}\to x)$ is two types higher than $x$, and---by
stratified comprehension again---the graph (call it $f_{3}$) of the function
sending $\{\emptyset\}\to(\{\emptyset\}\to x)$ to $\{\{x\}\}$ is also a set.

Then the composition $f_{3}\circ f_{1}\circ f_{2}$ sends $\{x\}$ to
$\{\{x\}\}$. Thus $f_{3}\circ f_{1}\circ f_{2}$ is $\iota\restriction\iota``A$,
which is to say that $A$ is strongly cantorian. But $A$ was arbitrary.

\end{proof}

KF + Cartesian Closure gives us an example of an SPE where $T$ is naturally isomorphic to the identity functor. In NF, the $T$ functor described earlier (naturally isomorphic to $\cat{N}(1,-)$) is sufficient to describe the aspect of an SPE we call \emph{pseudo} cartesian closure. 

\begin{mydef}
	$A\imp B$ denotes the set of functions between sets A and B. If $A\imp B$ satisfies the conditions of an \emph{exponential} we denote it $B^A$.\footnote{In this sense, NF gives us a stark example of a situation where an object satisfying a universal property may be very different from the object which is the predicate-in-extension of the intuitive set-theoretic definition.}
\end{mydef}

For any two sets $A,B$ of NF, the set $A\imp B$ exists; cartesian closure fails because the evaluation
arrow $ev_{A,B}:A\times(A\imp B)\to B$, ``$ev(\tuple{x,f})=f(x)$,'' cannot be stratified. We can instead choose a proxy
definition which tells us indirectly what we need to know about $\hom(A,B)$ by
observing that the expression ``$\tuple{\{x\},f}=\{f(x)\}$'' is stratifiable;
thus if $f:A\to B$, we do always have an evaluation arrow, $ev'_{A,B}$ with the
expected property from $TA\times(A\imp B)$ to $TB$.\footnote{ Nathan Bowler and Morgan Thomas have formulated this property independently. The novelty of this paper is the proof that it forms a relative adjunction, and the development of a localized version.}
\begin{prop}
  for all $A,B$, there is a $(TA\times-)$-co-universal arrow $\tuple{A\imp
    B,ev'_{A,B}}$ to $TB$; equivalently, we have the relative adjunction
  $(TA\times-)\dashv_{T}(A\imp-)$ with co-unit $ev'$.
  \end{prop}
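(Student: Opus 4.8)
The plan is to establish the couniversal property pointwise and then read off the relative adjunction from the pointwise characterisation of relative adjunctions. Fix $A$ and $B$. First I would check that $ev'_{A,B}\colon TA\times(A\imp B)\to TB$, given on points by $\tuple{\{x\},f}\mapsto\{f(x)\}$, really is an arrow of $\cat{N}$: its graph is carved out by a stratifiable abstract, since giving $x$ and $f(x)$ type $0$, the singletons $\{x\}$ and $\{f(x)\}$ type $1$, and the function $f$ (a set of type-level pairs from $A$ to $B$) type $1$, makes both the pair $\tuple{\{x\},f}$ and the value $\{f(x)\}$ sit at type $1$.

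For the couniversal property I would fix an arbitrary $Z$ together with an arbitrary $g\colon TA\times Z\to TB$ and build its transpose $\hat g\colon Z\to(A\imp B)$. As $g$ lands in $TB=\varusc{B}$, each value $g(\tuple{\{x\},z})$ is a singleton, so I let $\hat g(z)$ be the function $A\to B$ sending $x$ to the unique element $\bigcup g(\tuple{\{x\},z})$ of that singleton. The factorisation $ev'_{A,B}\circ(TA\times\hat g)=g$ is then a one-line computation, $ev'(\tuple{\{x\},\hat g(z)})=\{\hat g(z)(x)\}=\{\bigcup g(\tuple{\{x\},z})\}=g(\tuple{\{x\},z})$, and uniqueness is immediate, since any $h$ with $ev'\circ(TA\times h)=g$ must satisfy $\{h(z)(x)\}=g(\tuple{\{x\},z})$ and hence $h(z)(x)=\bigcup g(\tuple{\{x\},z})$ for all $x,z$.

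The step I expect to be the real obstacle --- and the reason a $T$ appears at all --- is verifying that the graph of $\hat g$ is a set, i.e.\ that its defining abstract is stratifiable. The \naive{} transpose $Z\to(TA\imp TB)$ is \emph{not} stratifiable: there $z$ is forced to carry the type of $\{x\}$ while the curried value lives one type higher, so the pair $\tuple{z,-}$ cannot be typed. The trick is that de-singletoning the values of $g$, passing from $\{b\}\in TB$ back to $b\in B$ by $\bigcup$, lowers the type of the transpose by one, so that $\hat g(z)$, as a function $A\to B$, sits at exactly the type of $z$. I would make this precise with the explicit assignment giving $x,b$ type $0$; giving $\{x\}$, $z$, $\hat g(z)$ and the entries of $g$ type $1$; and giving $g$, $Z$, $A\imp B$ type $2$, checking that every membership and equality in the abstract is then correctly stratified. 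This type-lowering is exactly dual to the type-raising concealed in $TA$, which is why the adjunction must be $T$-relative rather than ordinary.

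Finally I would observe that the assignment $g\mapsto\hat g$ is precisely the bijection $\cat{N}(TA\times Z,TB)\cong\cat{N}(Z,A\imp B)$ demanded by Ulmer's correspondence, natural in $Z$ by construction and in $B$ by a routine check. A couniversal arrow to $TB$ for each $B$ therefore assembles into the relative adjunction $(TA\times-)\dashv_{T}(A\imp-)$ with co-unit $ev'$, giving the stated equivalence.
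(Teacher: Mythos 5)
Your proof is correct and takes essentially the same route as the paper's: the key step in both is the de-singletoned transpose $\hat g(z)(x)=\bigcup g(\tuple{\{x\},z})$, which is exactly the paper's $\overline{f}=\lambda c.\lambda a.\bigl(\bigcup f(\{a\},c)\bigr)$. The only difference is packaging --- you verify the couniversal-arrow formulation (factorisation plus uniqueness) while the paper exhibits the natural bijection $\hom(TA\times C,TB)\cong\hom(C,A\imp B)$ and checks naturality in $C$ directly; these are the two equivalent formulations the proposition itself names, and your explicit stratification bookkeeping for $\hat g$ is a welcome addition the paper leaves implicit.
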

  
\begin{proof}\footnote{For economy of language, we use $\lambda$ notation in the following proofs, putting ``$\lambda x.\phi$'' where one might write ``$x\mapsto\phi$''.}
  We want a natural isomorphism $\theta:\hom(TA\times C,TB)\cong\hom(C,B^{A})$.
  We let $\theta$ take $f:TA\times C\to TB$ to the function
\[
\overline{f}:=\lambda c.\lambda a.\left(\bigcup f(\{a\},c)\right).
\]
To see naturality, observe that given a function $h:C\to D$ and a $g:TA\times
D\to TB$ we can form the function $\tuple{\{a\},c}\mapsto g(\{a\},h(c))$ formed
by the composite $g\circ(id_{TA}\times h)$. We then take
$\theta_{C}(g\circ(id_{TA}\times h))$, or
\[
\lambda c.\lambda a.\left(\bigcup g(\{a\},h(c))\right).
\]
 If we instead take $g$ to $\theta_{D}(g)=\overline{g}:D\to B^{A}$, we can then
 take the composite $\overline{g}\circ h:C\to B^{A}$, which is again
\[
\lambda c.\lambda a.\left(\bigcup g(\{a\},h(c))\right),
\]
establishing naturality of $\theta$. For the inverse we have $\theta^{-1}$ take
$m:C\to B^{A}$ to
\[
\underline{m}:=\lambda{\tuple{\{a\},c}}.\left(m(c)(a)\right).
\]
Given $n:D\to B^{A}$, we can form the composite $n\circ h$ and apply
$\theta_{C}^{-1}$ to obtain
\[
\underline{n\circ h}=\lambda{\tuple{\{a\},c}}.\left(n\circ h(c)(a)\right).
\]
Alternatively, we can apply $\theta_{D}^{-1}$ to $n$ and get
$\underline{n}:TA\times D\to TB$.  Taking the composite
$\underline{n}\circ(id_{TA}\times h)$, we get exactly $\underline{n\circ h}$,
establishing that $\theta^{-1}$ is also natural. To see that these are in fact
inverse to one another, consider
\[
\theta_{D}(\underline{m})=\overline{(\underline{m})}=\lambda d.\lambda a.\left(\bigcup(\lambda{\tuple{\{a\},d}}.\left(m(d)(a)\right)(\{a\},d))\right)
\]
which reduces to $\lambda d.\lambda a.\left(m(d)(a)\right)$. Since
$\lambda d.\lambda a.(m(d)(a))(d)(a)=m(d)(a)$ and functions in NF are
extensional, we have that this has given us back $m$ again.  This gives us that
$\theta\circ\theta^{-1}=id_{\hom(-,B^{A})}$. To show
$\theta^{-1}\circ\theta=id_{\hom(TA\times-,TB)}$ we
reduce
\[
\lambda{\tuple{\{a\},d}}.\left(\lambda d.\lambda a.\left(\bigcup
g(\{a\},d)\right)(d)(a)\right),
\]
which is readily seen just to be $g:TA\times D\to TB$. We omit the proof of naturality in $B$ to save space - it is easier to see than naturality
in $C$.
\end{proof}
The asymmetry of relative adjunctions results in second relative adjoint, which captures a qualified form of the unit. 

\begin{prop}
	There exists a relative adjunction $(A\times-)_{\;T}\negthickspace\dashv(A\imp-)$.
\end{prop}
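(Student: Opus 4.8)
The plan is to obtain this relative adjunction not by a fresh currying construction but as a formal consequence of the proposition just proved, leveraging the two structural facts about $T$ already established: that $T$ is full and faithful, and that $T$ creates (hence preserves) finite limits, so in particular $T(A\times E)\cong TA\times TE$ naturally in $A$ and $E$. Unwinding Ulmer's notation, the claim $(A\times-)\,_{T}\!\dashv(A\imp-)$ asks for a bijection
\[
\hom(A\times E,\,C)\;\cong\;\hom(TE,\,A\imp C)
\]
natural in $E$ and $C$, with left adjoint $F=(A\times-)$, comparison functor $J=T$, and right adjoint $G=(A\imp-)$; note the orientation is that of a $J$-\emph{left} adjoint, so it is the unit (not the counit) that we expect to recover.

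I would assemble the bijection as a composite of three natural isomorphisms. First, since $T$ is full and faithful, $\hom(A\times E,C)\cong\hom\!\big(T(A\times E),\,TC\big)$. Second, product-preservation gives $T(A\times E)\cong TA\times TE$, whence $\hom(T(A\times E),TC)\cong\hom(TA\times TE,\,TC)$. Third -- and here the previous result does the real work -- I apply the preceding proposition with its free variable ``$C$'' instantiated to $TE$ and its ``$B$'' to $C$, obtaining $\hom(TA\times TE,\,TC)\cong\hom(TE,\,A\imp C)$. Composing the three yields the asserted correspondence, and naturality in $E$ and $C$ is inherited from the factors (the middle from functoriality of $T$ together with the naturality of the product-preservation isomorphism, the last from the naturality of $\theta$ established in the previous proof).

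It then remains to read off the unit. Tracing $\mathrm{id}_{A\times E}$ through the composite, one finds the ``$T$-shifted unit'' $\zeta_E:TE\to\big(A\imp(A\times E)\big)$ sending $\{e\}\mapsto\big(a\mapsto\tuple{a,e}\big)$; this is exactly the qualified form of the unit alluded to above. Its definition is stratifiable precisely because replacing $e$ by the singleton $\{e\}$ raises its type by one, matching the extra type carried by the graph of $a\mapsto\tuple{a,e}$ -- the same type-raising trick that makes the whole adjunction go through.

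The only genuine content beyond bookkeeping is the product-preservation step: one must check that in $\cat{N}$ the map $\{\tuple{a,e}\}\mapsto\tuple{\{a\},\{e\}}$ is a stratified isomorphism $T(A\times E)\xrightarrow{\sim}TA\times TE$, natural in both arguments. This is where type-level pairing is essential and where a careless stratification would fail, so I expect it to be the main (if modest) obstacle. For a reader preferring an explicit construction I would also record the direct route, mirroring the previous proof: send $f:A\times E\to C$ to $\lambda\{e\}.\lambda a.f\tuple{a,e}$, with inverse $m\mapsto\lambda\tuple{a,e}.m(\{e\})(a)$, and verify both legs are stratifiable and mutually inverse exactly as before. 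The composite-of-known-isomorphisms argument is shorter, however, and reuses the stratification checks already completed.
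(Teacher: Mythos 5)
Your argument is correct, but it takes a different route from the paper's own proof of this proposition. The paper proceeds by direct construction: given $f:TC\to A\imp B$ it forms $f':\tuple{a,c}\mapsto f(\{c\})(a)$, exhibits the unit $k_{C}(\{c\})=\lambda a.\tuple{a,c}$ explicitly, and recovers $f$ as $(A\imp f')\circ k$, deferring the remaining verifications to the pattern of the previous proof. You instead derive the bijection $\hom(A\times E,C)\cong\hom(TE,A\imp C)$ formally as the composite of fullness-and-faithfulness of $T$, product preservation $T(A\times E)\cong TA\times TE$, and the already-established isomorphism $\theta$, and then read off the same unit $\{e\}\mapsto(a\mapsto\tuple{a,e})$ by tracing the identity. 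This is a legitimate and arguably cleaner derivation; indeed the paper itself records exactly your chain $\hom(A\times C,B)\cong\hom(T(A\times C),TB)\cong\hom(TA\times TC,TB)\cong\hom(TC,A\imp B)$ a little later, in the proposition relating $T$-preservation of exponentials to the interderivability of the two relative adjunctions. What your route buys is economy (no fresh stratification check for the currying map) and a clear display of which structural properties of $T$ are actually responsible for the second adjunction; what it costs is a dependence on fullness, faithfulness, and product preservation of $T$ in $\cat{N}$, which at this point in the paper are asserted rather than verified in detail---you rightly flag the stratified isomorphism $\{\tuple{a,e}\}\mapsto\tuple{\{a\},\{e\}}$ (requiring type-level pairs) as the one point of genuine set-theoretic content. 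The paper's direct construction is self-contained in that respect. Both proofs yield the same unit, so the two are fully compatible.
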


\begin{proof}
	
From a function $f:TC\to A\imp B$ one can form
a function $f':\tuple{a,c}\mapsto f(\{c\})(a)$.  If we let
\[
k_{C}(\{c\}):=\lambda a.(\tuple{a,c}):A\imp(A\times C),
\]
then $k$ is the unit of the relative adjunction $(A\times-)_{\;
  T}\negthickspace\dashv(A\imp-)$, returning $f$ as $A\imp f'\circ k$
(where $A\imp f':A\imp(A\times C)\longrightarrow A\imp B$ is just
postcomposition by $f'$). The arguments to establish this follow the lines of those above. 
\end{proof}
The presence of these relative
adjunctions proves:

\begin{cor}
	$\cat{N}$, the category of NF Sets, is pseudo-cartesian closed.
\end{cor}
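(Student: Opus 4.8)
The plan is to read ``pseudo-cartesian closed'' in the sense developed across this subsection: for every object $A$ the product functor admits a $T$-relative right adjoint of exponential form, in both of the two asymmetric senses that relative adjunctions make available. Concretely I would take the content of the corollary to be exactly the conjunction of the two relative adjunctions established in the immediately preceding Propositions, namely $(TA\times-)\dashv_{T}(A\imp-)$ with counit $ev'$, and $(A\times-)_{\;T}\negthickspace\dashv(A\imp-)$ with unit $k$. Since both halves have already been verified, the first and main step is simply to observe that, taken together, these two statements are precisely the assertion of pseudo-cartesian closure: the failure of genuine cartesian closure — the non-stratifiability of $ev$ — is repaired by inserting $T$ on exactly one side of each hom-set correspondence, and the two Propositions supply the counit and the unit data that an ordinary adjunction would package into a single pair $(\eta,\varepsilon)$. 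Thus the bulk of the proof is a bookkeeping assembly of results already in hand.

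Next I would check that $A\imp-$ organises into a genuine bifunctor $\imp:\cat{N}^{op}\times\cat{N}\to\cat{N}$, contravariant in the first argument by precomposition and covariant in the second by postcomposition; this is immediate in $\cat{N}$, since on graphs of functions both operations are given by stratifiable set abstracts. With the bifunctor in hand I would produce the remaining exponential data of the SPE definition from explicit type-shifting bijections. For the isomorphism $TA\cong 1\imp A$ one uses that a function out of a terminal singleton is determined by its single value, so that $1\imp A$ is in canonical bijection with $A$ one type up, i.e.\ with $\iota``A=TA$. For $TA\imp TB\cong T(A\imp B)$ one invokes the external bijection between $\iota``A$ and $A$ to see that a function $\iota``A\to\iota``B$ is, after applying $\bigcup$ pointwise, a singleton's worth of a function $A\to B$; this is the very type-raising device driving the proof of the first Proposition. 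The transformation picking out the identity, and the transformation $TA\imp TB\to(X\imp A)\imp(X\imp B)$ internalising the functorial action of $X\imp-$, are then defined by the evident stratifiable abstracts.

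The hard part is not the construction of any single piece of data but the verification of the coherence identities together with the extranaturality of these transformations — the conditions that make $\imp$ behave as the internal hom of a closed category relative to $T$. I expect the recurring obstacle throughout to be the tracking of types: the naturality and coherence squares one would write for an ordinary closed category typically fail to stratify on the nose in $\cat{N}$, and become stratifiable only after a $T$ is inserted on the appropriate side, which is exactly why every adjunction here is $T$-relative. I would therefore verify each identity by writing both legs as explicit $\lambda$-expressions on tupled singleton arguments, in the style of the proof of the first Proposition, and checking that the two reduce to the same stratifiable abstract. As the footnote to the SPE definition indicates, these checks are routine once one is fluent in the stratification calculus, and none introduces mathematical content beyond the two relative adjunctions already established; hence the corollary follows.
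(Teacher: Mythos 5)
Your reading matches the paper exactly: the corollary is presented as an immediate consequence of the two preceding propositions, the relative adjunctions $(TA\times-)\dashv_{T}(A\imp-)$ and $(A\times-)_{\;T}\negthickspace\dashv(A\imp-)$, and the paper gives no further argument beyond the sentence ``The presence of these relative adjunctions proves.'' Your additional verification of the $\imp$ bifunctor and the coherence/extranaturality data is consistent with the SPE definition but goes beyond what the paper records, which relegates those checks to a footnote as an exercise for the reader.
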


The connection to cartesian closure and, in some sense, the symmetry of these relative adjunctions is provided by the following result.

\begin{prop} If $T$ preserves exponentials (in the sense of $T(A\imp B)$
being isomorphic to $TA\imp TB$ in a natural way), is full and faithful, and
creates limits, then each implies the other.
\end{prop}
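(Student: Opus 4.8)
The plan is to read the statement as asserting that, under the three stated hypotheses on $T$, the two relative adjunctions established above --- the relative adjunction $(TA\times-)\dashv_{T}(A\imp-)$ with counit $ev'$, and the relative adjunction $(A\times-)_{\;T}\negthickspace\dashv(A\imp-)$ with unit $k$ --- each imply the other. The engine in both directions is the same: translate the defining hom-set isomorphism of one relative adjunction into that of the other by passing through $T$. Three consequences of the hypotheses are all that is needed. First, full faithfulness of $T$ gives a natural bijection $\hom(X,Y)\cong\hom(TX,TY)$. Second, since $T$ creates, and in particular preserves, finite limits, it preserves binary products: $T(A\times C)\cong TA\times TC$. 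Third, the exponential-preservation hypothesis gives $T(A\imp B)\cong TA\imp TB$, naturally.

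For the implication from the first relative adjunction to the second, I would start from $\hom(A\times C,B)$ and compute
\[
\hom(A\times C,B)\;\cong\;\hom\bigl(T(A\times C),TB\bigr)\;\cong\;\hom(TA\times TC,TB)\;\cong\;\hom(TC,A\imp B),
\]
where the first isomorphism is full faithfulness of $T$, the second is preservation of products, and the third is the defining isomorphism $\theta$ of the first relative adjunction applied with the variable object $TC$ in place of $C$. The composite is exactly the hom-set isomorphism defining $(A\times-)_{\;T}\negthickspace\dashv(A\imp-)$.

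For the converse, I would start from $\hom(C,A\imp B)$ and compute
\[
\hom(C,A\imp B)\;\cong\;\hom\bigl(TC,T(A\imp B)\bigr)\;\cong\;\hom(TC,TA\imp TB)\;\cong\;\hom(TA\times C,TB),
\]
where the first isomorphism is again full faithfulness, the second is preservation of exponentials, and the third is the defining isomorphism of the second relative adjunction applied with $TA$, $TB$, and $C$ in the roles of $A$, $B$, and the variable. The composite is precisely the isomorphism defining $(TA\times-)\dashv_{T}(A\imp-)$.

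The only real work, and the place I expect the main obstacle, is the bookkeeping of naturality: each displayed chain is a composite of isomorphisms that are individually natural in the relevant variables ($C$, and naturally/extranaturally in $A$ and $B$), and I must check these naturalities are compatible, so that the composite assembles genuine relative-adjunction data rather than a mere family of hom-set bijections. I would confirm this by tracking the image of an identity arrow through each chain, which simultaneously verifies that the transported counit (resp.\ unit) is the expected $ev'$ (resp.\ $k$). Finally, specialising to $T\cong\mathrm{id}$ collapses both chains to the single ordinary adjunction $(A\times-)\dashv(A\imp-)$, which is the promised connection to cartesian closure.
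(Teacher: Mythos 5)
Your proposal is correct and follows essentially the same route as the paper: both directions are obtained by the same two chains of hom-set isomorphisms (full faithfulness of $T$, preservation of products resp.\ exponentials, then the defining isomorphism of the other relative adjunction with the appropriate objects substituted). The only difference is that the paper also pauses to verify, via the stratifiability of ``$Th=\{h\}$'', that the exponential-preservation hypothesis actually holds in $\cat{N}$, whereas you treat it purely as a hypothesis, which is consistent with the statement as given.
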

\begin{proof}
 Let $f\imp g:A\imp B\to A'\imp B'$, for $f:A'\to
A$ and $g:B\to B'$, be the map that takes $h\in A\imp B$ to $g\circ h\circ f\in
A'\imp B'$. Then $T(f\imp g)$ takes $\{h\}\to\{g\circ h\circ f\}$ while $Tf\imp
Tg$ takes $Th$ to $Tg\circ Th\circ Tf$. Since ``$Th=\{h\}$'' is stratifiable,
the desired natural transformation exists and can be shown to be an
isomorphism. As $T(A\imp B) \cong TA\imp TB$, one can show that $$\hom(C,A\imp B)\cong\hom(TC,T(A\imp B)) \cong\hom(TA\times C,TB)$$ where the first isomorphism comes from the fullness and faithfulness of $T$. The
only relative adjunction used in this string of isomorphisms is $(A\times-)_{\;T}\negthickspace\dashv(A\imp-)$. A similar one using only the $(TA\times-)\dashv_{T}(A\imp-)$ relative adjunction is even easier, since $$\hom(A\times C,B)\cong\hom(T(A\times C),TB)\cong\hom(TA\times TC,TB)\cong\hom(TC,A\imp B).$$
\end{proof}

\subsubsection{Connecting Subobjects and Powerobjects, in NF}
From pseudo-cartesian closure, we can obtain a pseudo-powerobject.\footnote{This is, effectively, Nathan Bowler's observation on representability of $Sub(TA \times -)$.} We wish to make this situation precise, further strengthening the link between an SPE and an elementary topos. 

\begin{mydef}
	We give a precise definition of \emph{pseudo-powerobjects}, satisfying the following universal property: for a subobject of $R \hookrightarrow TA \times B$, there is a unique morphism $\hat{r}: B \to PA$ such that the following diagram commutes, where $r$ is the characteristic morphism of $R$:

$$ \xymatrix{B \ar[d]^{\hat{r}} & TA \times B \ar[r]^<<<<{r} \ar[d]^{1_{TA} \times \hat{r}} & 2 \ar@{=}[d] \\ 
	PA & TA \times PA \ar[r]_<<<<{\in_A} & 2} $$ 

We refer to $\hat{r}$ as the \emph{P-transpose of $r$}.
\end{mydef}

The following lemmas hold for an arbitrary SPE, as well as the specific case of $\cat{N}$.

\begin{lem}\

$(TA\times-)\dashv_{T}(A\imp-)$ and the existence of a subobject classifier is equivalent to $Sub(TA \times -)$ being representable.
\end{lem}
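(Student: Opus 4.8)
The plan is to treat this as the stratified analogue of the standard topos-theoretic fact that a finitely complete, cartesian closed category with a subobject classifier has power objects, with the ordinary exponential replaced by the $T$-relative one. The representing object will be $PA:=(A\imp 2)$, where $2$ is the subobject classifier, so that in $\cat N$ this is literally the powerset $\mathcal P A$.

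For the forward direction (relative adjunction plus classifier $\Rightarrow$ representability) I would produce a chain of isomorphisms natural in $B$:
$$\mathsf{Sub}(TA\times B)\;\cong\;\hom(TA\times B,2)\;\cong\;\hom(TA\times B,T2)\;\cong\;\hom(B,A\imp 2)\;=\;\hom(B,PA).$$
The first isomorphism is the defining property of the classifier (each subobject of $TA\times B$ has a unique characteristic map to $2$), natural in $B$ because pullback of the generic subobject along $1_{TA}\times(-)$ is functorial. The third is the transpose of the relative adjunction $(TA\times-)\dashv_T(A\imp-)$ proved above, instantiated at the object $2$. The middle step is the one requiring care: it uses $T2\cong 2$, that the classifier is $T$-stable. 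In $\cat N$ this is immediate, since $2$ is finite and hence strongly cantorian, so $\iota\restric 2$ is a set and $T2=\iota``2\cong 2$; in a general SPE I would obtain it from axiom (3a), $T2\cong 1\imp 2$, together with the identification of $1\imp 2$ with the classifier via $\mathsf{Sub}(1)$. Chasing the generic subobject $\in_A\hookrightarrow TA\times PA$ through these isomorphisms then exhibits exactly the universal property in the definition of pseudo-powerobject.

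For the converse (representability $\Rightarrow$ relative adjunction plus classifier) I would first recover the classifier by specialising to $A=1$. Since $T$ creates finite limits it preserves the terminal object, so $T1\cong 1$ and hence $\mathsf{Sub}(B)\cong\mathsf{Sub}(T1\times B)\cong\hom(B,P1)$ naturally in $B$. By the contravariant Yoneda lemma this natural isomorphism is induced by a universal element, namely the subobject of $P1$ corresponding to $\mathrm{id}_{P1}$; tracing the element $\mathrm{id}_1\in\mathsf{Sub}(1)$ shows this universal subobject is a mono $\top\colon 1\to P1$ with the generic-pullback property, so $2:=P1$ is a genuine subobject classifier. With $2=P1$ in hand, representability reads $\hom(TA\times B,2)\cong\hom(B,PA)$ naturally in $B$, which together with $T2\cong 2$ is precisely the transpose bijection of the relative adjunction evaluated at the classifier, identifying $PA$ with $A\imp 2$.

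The hard part is the converse passage from this classifier-valued statement to the relative adjunction for an arbitrary target $TB'$: representability of $\mathsf{Sub}(TA\times-)$ only controls maps into $2$, whereas $(TA\times-)\dashv_T(A\imp-)$ concerns maps into every $TB'$. I would bridge this using the ambient $\imp$-structure of the SPE, in particular the natural isomorphisms $e_{A,B}\colon(TA\imp TB)\cong T(A\imp B)$ and $i_A\colon TA\cong 1\imp A$ from axiom (3), to transport the transpose from the classifier to general function-set objects; so the equivalence is really between representability and the classifier together with the full relative adjunction in the presence of the given exponential bifunctor. Verifying naturality (and extranaturality in $A$) at each stage, and the $T$-stability $T2\cong 2$ outside of $\cat N$, are the points demanding the most attention; everything else is a diagram chase.
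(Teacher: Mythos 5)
Your forward direction is essentially the paper's argument in hom-set rather than pullback form: both hinge on transposing at the classifier via $(TA\times-)\dashv_{T}(A\imp-)$ and on the $T$-stability $T2\cong 2$ (in $\cat{N}$ because $2$ is concretely finite, hence strongly cantorian), and your recovery of the classifier as $P1$ using $T1\cong 1$ is a clean way to handle a step the paper leaves implicit. That half is fine.

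The converse is where you have a genuine gap. The lemma asks you to \emph{produce} the relative adjunction $(TA\times-)\dashv_{T}(A\imp-)$ from representability of $\mathsf{Sub}(TA\times-)$, and representability only controls maps into $2$; you correctly identify this as the hard part, but your proposed bridge --- transporting the transpose to arbitrary targets $TB'$ using $e_{A,B}$ and $i_A$ from axiom (3) --- is not a construction, and you concede as much when you say the equivalence becomes one ``in the presence of the given exponential bifunctor.'' That presupposes the function-space objects and (essentially) the adjunction you are supposed to derive, so the converse as you have written it is circular. What is actually needed, and what the paper does, is the stratified version of the Mac Lane--Moerdijk argument that power objects yield exponentials: carve $A^{B}$ out of $P(A\times B)$ as the subobject of single-valued relations, using the singleton map $\{\cdot\}\colon TA\to PA$ and the predicate $\sigma_{A}\colon PA\to 2$ classifying singletons, and then obtain the pseudo-evaluation $ev'\colon TB\times A^{B}\to TA$ as the unique factorization through the pullback of $t\colon 1\to 2$ along $\sigma_{A}$. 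Only after that construction do you have an $\imp$ for which the transpose bijection at arbitrary targets can be verified. Without this step your proof establishes representability $\Leftrightarrow$ (classifier $+$ the adjunction restricted to target $2$), which is weaker than the stated lemma.
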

\begin{proof}
We simply mirror the traditional construction, in an elementary topos, with appropriate typing. Whereas the generalized membership element $\in_A \hookrightarrow A \times PA$ arises as the pullback of the generic monomorphism $t: 1 \to 2$, along $ev_{A}: A \times 2^{A} \to 2$, NF permits the pullback of $t$ along $ev'_{A}: TA \times 2^{A} \to T2$. Thus for any relation $R \hookrightarrow TA \times B$, obtain the following double pullback:

$$ \xymatrix{R \ar[d] \ar[r] & \in_{A} \ar[d] \ar[r]^{!} & 1 \ar[d]^{t} \\
TA \times B \ar[r]_{1_{TA} \times \hat{r}} & TA \times PA \ar[r]_<<<<{ev'_{A}} & T2} $$

We know, however, that 2 is concretely finite, so strongly cantorian, hence $T2 \cong 2$. Therefore, we conclude that R is uniquely classified by the subobject classifier, as well as the representability of $Sub(TA \times -)$.

$$ \xymatrix{R \ar[d] \ar[r] & \in_{A} \ar[d] \ar[r]^{!} & 1 \ar[d]^{t} \\
TA \times B \ar[r]_{1_{TA} \times \hat{r}} & TA \times PA \ar[r]_<<<<{ev'_{A}} & 2} $$

In the other direction, we construct the stratified analogue of a standard proof (that can be found in \cite{mm}), substituting our definition of pseudo-powerobjects for the classical one. 

$$\xymatrix{TB \times A^B \ar[r]_<<<{1 \times m} \ar[d]_{1 \times !} \ar@/^2pc/[rrr]^{ev'} & TB \times P(A \times B) \ar[r]_<<<<<{v} \ar[d]_{1 \times u} & PA \ar[d]_{\sigma_A} & TA \ar[l]^{\{\cdot\}} \ar[d] \\
TB \times 1 \ar[r]^{1 \times \ulcorner t_B \urcorner} \ar@/_2pc/[rrr]_{!} & TB \times PB \ar[r]^{\in_B} & 2 & 1 \ar[l]_{t}}$$

The unique factorization through the pullback is the pseudo-evaluation morphism. 
\end{proof}

\subsubsection{Pseudo Dependent Products}

A locally cartesian closed category is defined by the existence of left and right adjoints to the pullback functor: $\Sigma_{f} \dashv f^{*} \dashv \Pi_{f}$. As NF is finitely complete, and $\Sigma_{f}$ is just post-composition, $\cat{N}$ has $\Sigma_{f} \dashv f^{*}$, for any $f$. The right adjoint, however, cannot always exist in any consistent model of NF, as it would imply cartesian closure. Nonetheless, we can exhibit for any
pullback functor $f^{*}:\cat N/B\to\cat N/A$ a functor $\tilde{\Pi}_{f}:\cat
N/A\to\cat N/TB$ for which $f^{*}$ is a left adjoint relative to
$\tilde{T}_{B}:\cat N/B\to\cat N/TB$.  Moreover, in the case of $\alpha:A\to1$
(for whatever $A$) we can show that the definition of $\tilde{\Pi}_{\alpha}$
causes $\tilde{\Pi}_{\alpha}\alpha^{*}C$ to coincide with $A\imp C$ analogously
to how the usual composite corresponds to true exponentials.

For $\gamma:D\to A$ (or $(D,\gamma)$ for brevity) in $\cat N/A$,
the object component of $\tilde{\Pi}_{f}(D,\gamma)$ is defined as
\[
\left\{ \tuple{g,\{b\}}|b\in B\,\wedge \, g:(f^{-1}``\{b\})\to
D\,\wedge \,\gamma\circ g=id_{f^{-1}``\{b\}}\right\} \textrm{;}
\]
the map to $TB$ that makes it part of the intended slice category is just the
right projection. One must assign $g$ the same type as its domain,
$f^{-1}``\{b\}$, which must in turn receive the same type as $\{b\}$, which gets
the same type as $B$. It is functorial because any map
$h:(D,\gamma)\to(D',\gamma')$ can be carried to the map $\tilde{h}_{f}$, which
carries $\tuple{g,\{b\}}$ to $\tuple{h\circ g,\{b\}}$, which is well behaved by
virtue of $h$ being a morphism in the slice category.
\begin{prop}
  The pullback functor is a relative left adjoint to
  $\tilde{\Pi}_{f}$.\end{prop}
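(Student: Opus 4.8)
The plan is to establish the relative adjunction by exhibiting its hom-set bijection directly, exactly as was done for the exponential adjunctions above. Write $f^{*}(Y,\psi)=(A\times_{B}Y,\pi_{A})$, where $A\times_{B}Y=\{\tuple{a,y}\mid f(a)=\psi(y)\}$ and $\pi_{A}$ is the left projection, and write $\tilde T_{B}(Y,\psi)=(TY,T\psi)$. I want a bijection
\[
\cat N/A\,\bigl(f^{*}(Y,\psi),(D,\gamma)\bigr)\;\cong\;\cat N/TB\,\bigl(\tilde T_{B}(Y,\psi),\tilde\Pi_{f}(D,\gamma)\bigr)
\]
natural in $(Y,\psi)\in\cat N/B$ and $(D,\gamma)\in\cat N/A$. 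A morphism on the left is a map $k\colon A\times_{B}Y\to D$ with $\gamma(k\tuple{a,y})=a$; a morphism on the right is a map over $TB$, hence of the form $m(\{y\})=\tuple{g_{y},\{\psi(y)\}}$ where each $g_{y}$ is a $\gamma$-section defined on the fibre $f^{-1}``\{\psi(y)\}$. I would transpose these by
\[
\bar k(\{y\})=\tuple{\lambda a.\,k\tuple{a,y},\ \{\psi(y)\}},\qquad\underline m\,\tuple{a,y}=\bigl(\fst\,m(\{y\})\bigr)(a).
\]

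The hard part, as always, is stratification, and checking it is where one sees why $\tilde\Pi_{f}$ must land in $\cat N/TB$. Give the elements of $A$, $B$, $Y$, $D$ a common type $n$ (forced by $f$, $\psi$, $\gamma$ being graphs of functions). Then $k$ is a type-$(n{+}1)$ object; a fibre $f^{-1}``\{\psi(y)\}\subseteq A$ and any section $g$ over it are again type $n{+}1$, and $\{\psi(y)\}$ is type $n{+}1$ as well, so $\tuple{g,\{\psi(y)\}}$ is a genuine type-level pair and $\tilde\Pi_{f}(D,\gamma)$ is type $n{+}2$. Reading off the defining abstract of $\bar k$, whose graph is $\{\tuple{\{y\},\tuple{g_{y},\{\psi(y)\}}}\mid y\in Y\}$ with $g_{y}=\{\tuple{a,k\tuple{a,y}}\mid f(a)=\psi(y)\}$, both coordinates $\{y\}$ and $\tuple{g_{y},\{\psi(y)\}}$ sit at type $n{+}1$, so the abstract is stratified and $\bar k$ is a set. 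The point is that the \naive{} base point $\psi(y)\in B$ lives at type $n$, one level below the section $g_{y}$; raising it to $\{\psi(y)\}$ repairs the mismatch, which is precisely why the codomain slice is taken over $TB$ and the source object over $TY$ --- that is, why the adjunction is relative to $\tilde T_{B}$.

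That the two assignments are mutually inverse is then immediate from extensionality: $\underline{\bar k}\,\tuple{a,y}=k\tuple{a,y}$ directly, while $\overline{\underline m}(\{y\})=\tuple{\lambda a.\,g_{y}(a),\{\psi(y)\}}=m(\{y\})$, the second coordinate being forced since $m$ lies over $TB$. Naturality in $(D,\gamma)$ amounts to post-composing with the action of $\tilde\Pi_{f}$ on a slice map $h$, namely $\tuple{g,\{b\}}\mapsto\tuple{h\circ g,\{b\}}$, and naturality in $(Y,\psi)$ to pre-composing with $f^{*}$ of a base map and, on the other side, with its $T$-image; after unwinding the $\lambda$-abstractions both squares collapse to a single expression, exactly as in the proof of $(TA\times-)\dashv_{T}(A\imp-)$, so I would only record the reductions rather than belabour them.

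Finally, I would note the unit $\zeta\colon\tilde T_{B}\to\tilde\Pi_{f}f^{*}$ obtained by transposing the identity, $\zeta_{(Y,\psi)}(\{y\})=\tuple{\lambda a.\,\tuple{a,y},\{\psi(y)\}}$. This generalises the unit $k_{C}$ of the proposition $(A\times-)_{\;T}\negthickspace\dashv(A\imp-)$: specialising to $B=1$ and $f=\alpha\colon A\to1$ collapses every fibre to all of $A$, the redundant second coordinate drops out over $T1\cong1$, and one recovers $\zeta_{(C,!)}(\{c\})=\lambda a.\tuple{a,c}=k_{C}(\{c\})$ together with the identification of $\tilde\Pi_{\alpha}\alpha^{*}C$ with $A\imp C$. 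This special case is also a useful correctness check on the general formulae above.
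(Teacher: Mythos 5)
Your proof is correct and follows essentially the same route as the paper's: your transposes $\bar k$ and $\underline m$ are exactly the paper's $\tilde{\Pi}_{f}(k)\circ\sigma$ and $\acute{m}$, the paper merely packaging the bijection as the universal property of the unit $\sigma$ (your $\zeta$) before recording the natural isomorphism $\xi$. Your explicit stratification check and the specialisation to $\alpha:A\to1$ recovering $(A\times-)_{\;T}\negthickspace\dashv(A\imp-)$ are welcome elaborations of points the paper only states informally.
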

\begin{proof}
  We prove that for all $(C,\rho)\in\cat N/B$,
  there is a $\tilde{\Pi}_{f}$-universal arrow with domain $(TC,T\rho)$, and
  that this universal arrow is $\tuple{\sigma_{(C,\rho)},f^{*}(C,\rho)}$.  For
  our $\sigma_{(C,\rho)}$ we take the function
  $(TC,T\rho)\to\tilde{\Pi}_{f}f^{*}(C,\rho)$ assigning each $\{c\}\in TC$ to
  the function $\lambda_{a}(\tuple{c,a}):f^{-1}\{\rho(c)\}\to f^{*}(C,\rho)$.
  Given a morphism $m:(TC,T\rho)\to\tilde{\Pi}_{f}(D,\gamma)$, we construct a
  morphism $\acute{m}:f^{*}(C,\rho)\to(D,\gamma)$ by taking $\acute{m}$ to be
  $\lambda_{\tuple{c,a}:C\times_{B}A}\left(m(\{c\})(a)\right)$, where
  $C\times_{B}A$ is the pullback with respect to $f$ and $\rho$.  We then wish
  to show
  \begin{enumerate}
  \item that $\acute{\sigma}_{(C,\rho)}$ is $id_{f^{*}(C,\rho)}$,
  \item that $\tilde{\Pi}_{f}(\acute{m})\circ\sigma_{(C,\rho)}=m$, and
  \item that $\acute{m}$ is the only morphism $f^{*}(C,\rho)\to(D,\gamma)$ that
    satisfies 2.
  \end{enumerate}

  (1) is easy to verify: $\lambda_{\tuple{c,a}}(\sigma(\{c\})(a))$ acts as an identity on tuples. For (2), observe that $\tilde{\Pi}_{f}(\acute{m})$
  just takes each $q\in\tilde{\Pi}_{f}f^{*}(C,\rho)$%
  \footnote{We are here omitting the right element of the pairs that make up the
    $\tilde{\Pi}_{f}$ structure. More strictly, one should be speaking of
    $\tuple{q,f``dom(q)}$, but the second element is a mere formal convenience.
    We will omit the ``tag'' element often for brevity.} to $\acute{m}\circ
    q$. In particular, it takes $\lambda_{a}\left(\tuple{c,a}\right)$ to
    $\lambda_{a}(m(\{c\})(a))$, which is exactly $m(\{c\})$ by extensionality;
    so the function $\{c\}\mapsto\acute{m}\circ\sigma(\{c\})$ is precisely
    $m$. Finally, to prove (3), suppose we had some map
    $m^{\bigstar}:f^{*}(C,\rho)\to(D,\gamma)$ which satisfies (2). In other words, for any $q\in\tilde{\Pi}_{f}f^{*}(C,\rho)$,
    $m^{\bigstar}\circ q=\acute{m}\circ q$.  Since one can show that for any
    $\tuple{c,a}\in C\times_{B}A$ there is a member of
    $\tilde{\Pi}_{f}f^{*}(C,\rho)$ with value $\tuple{c,a}$ for some
    input - namely, $\sigma(\{c\})$ evaluated at $a$ - $m^{\bigstar}$ must agree
    with $\acute{m}$ on all inputs, giving us that $m^{\bigstar}=\acute{m}$.

  With these data, one can establish the more traditional isomorphism:  $\xi:\hom((TC,T\rho),\tilde{\Pi}_{f}(D,\gamma))\cong\hom(f^{*}(C,\rho),(D,\gamma))$
  by taking $\xi(m)=\acute{m}$ and $\xi^{-1}(n)=\tilde{\Pi}_{f}(n)\circ\sigma$.
  Naturality is given by the universal property of $\sigma$.
\end{proof}

The relative adjunction gives NF function spaces, and allows us to derive out earlier (non-local) result, $(A\times-)_{\; T}\negthickspace\dashv(A\imp-)$, as $\Sigma_{\alpha}\alpha_{\; T}^{*}\negthickspace\dashv\tilde{\Pi}_{\alpha}\alpha^{*}$. This turns out to be the corollary to a more general result. 

\begin{lem} 
	For any functors $J,G,F,H$ with $F_{\;
  J}\negthickspace\dashv G$ and $H\dashv F$, we obtain another relative
adjunction $HF_{\; J}\negthickspace\dashv GF$.  
\end{lem}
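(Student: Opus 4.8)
The plan is to produce the claimed relative adjunction by composing the two hom-set bijections we are handed, keeping careful track of which category each hom-set lives in. First I would pin down the types. Writing $J:\mathscr E\to\mathscr D$, $F:\mathscr E\to\mathscr C$, $G:\mathscr C\to\mathscr D$ for the data of $F\;_{J}\negthickspace\dashv G$, the hypothesis $H\dashv F$ forces $H:\mathscr C\to\mathscr E$, whence $HF:\mathscr E\to\mathscr E$ and $GF:\mathscr E\to\mathscr D$. Since $J$ and $GF$ both run $\mathscr E\to\mathscr D$ while $HF$ lands in $\mathscr E$, the statement $HF\;_{J}\negthickspace\dashv GF$ is precisely the existence of a bijection
\[
\hom_{\mathscr E}(HFA,B)\;\cong\;\hom_{\mathscr D}(JA,GFB)
\]
natural in $A,B\in\mathscr E$.

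I would build this bijection in two steps. Applying the ordinary adjunction $H\dashv F$ with $FA\in\mathscr C$ on the left and $B\in\mathscr E$ on the right gives $\hom_{\mathscr E}(HFA,B)\cong\hom_{\mathscr C}(FA,FB)$; then inserting the object $FB\in\mathscr C$ into the right-hand slot of the relative-adjunction isomorphism $\hom_{\mathscr C}(FA,-)\cong\hom_{\mathscr D}(JA,G-)$ gives $\hom_{\mathscr C}(FA,FB)\cong\hom_{\mathscr D}(JA,GFB)$. Composing the two isomorphisms yields the desired bijection. If one prefers the transformation-level packaging, the relative adjunction $F\;_{J}\negthickspace\dashv G$ carries a unit $\zeta:J\to GF$, and the unit of the new relative adjunction is $G\eta_{F}\circ\zeta:J\to GF\to GFHF$, where $\eta:1_{\mathscr C}\to FH$ is the unit of $H\dashv F$; one can check directly that this transposes identities correctly.

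Naturality of the composite is inherited: the $H\dashv F$ bijection is natural in both arguments, and substituting $FA$ and $FB$ preserves naturality in $A$ and $B$ because $F$ is a functor, while the relative-adjunction bijection is natural in $A$ and in its right variable, into which we have fed $FB$. The only genuine work is this bookkeeping of variances and of the three ambient categories; there is no new geometric or set-theoretic content, which is exactly why the lemma holds for arbitrary functors rather than only in $\cat{N}$. Specialising to $F=\alpha^{*}$, $G=\tilde{\Pi}_{\alpha}$, $J=\tilde{T}_{1}$ and $H=\Sigma_{\alpha}$ (using $\Sigma_{\alpha}\dashv\alpha^{*}$) then recovers $\Sigma_{\alpha}\alpha^{*}\,_{T}\negthickspace\dashv\tilde{\Pi}_{\alpha}\alpha^{*}$, as promised.
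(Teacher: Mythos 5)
Your proof is correct and is essentially the paper's own argument: both compose the bijection $\hom(HFa,b)\cong\hom(Fa,Fb)$ from $H\dashv F$ with the instance $\hom(Fa,Fb)\cong\hom(Ja,GFb)$ of the relative adjunction. Your extra bookkeeping of the ambient categories and the explicit unit $G\eta_{F}\circ\zeta$ are consistent additions but do not change the route.
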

\begin{proof}
From the relative adjunction we know that
$\hom(Ja,GFb)\cong\hom(Fa,Fb)$, and we know that the ordinary adjunction gives
us $\hom(Ha',b')\cong\hom(a',Fb')$.  Putting $Fa$ in place of $a'$ and $b$ in
place of $b'$, we obtain
\[
\hom(Ja,GFb)\cong\hom(Fa,Fb)\cong\hom(HFa,b),
\]
the desired relative adjunction. 
\end{proof}

The first adjunction, defining local pseudo cartesian closure in $\cat{N}$ can be seen as providing the appropriate notion of unit for pseudo cartesian closure in a given slice category. The appropriate form of (relative) counit is provided by a second relative adjunction.\footnote{For brevity's sake we simply state this result, a full proof of which can be found in the second author's thesis.}

\begin{prop}
	Given any morphism $f: C \to D$\footnote{One might prefer to say ``for any $g: TC \to TD$," but recall $T$ is full and faithful.} the following bijection holds (internally) for any maps $\gamma: A \to TD$ and $\beta: B \to TC$:
	$$ \cat{N}/TC(Tf^*(\gamma),T\beta) \cong \cat{N}/TD(\gamma, \tilde{\Pi}_f(\beta))$$
	In other words, there is a relative adjunction $Tf^*(-) \dashv_{T_C} \tilde{\Pi}_f(\beta)$.	
\end{prop}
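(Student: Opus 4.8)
The plan is to mirror the proof of the non-local counit proposition -- the construction of $ev'$ and the isomorphism $\theta$ -- in the dependent setting, by building an explicit \emph{pseudo-evaluation} co-unit for $\tilde{\Pi}_f$ and verifying its co-universality; the asserted bijection then follows exactly as in the $(TA\times-)\dashv_{T}(A\imp-)$ case. Throughout I take $\beta$ to be an object of $\cat{N}/C$ and write $T\beta$ for its image under the evident functor $T_{C}:\cat{N}/C\to\cat{N}/TC$, while $\gamma:A\to TD$ is an arbitrary object of $\cat{N}/TD$ (so each value $\gamma(a)$ is automatically a singleton $\{d\}$ with $d=\bigcup\gamma(a)$, and $A$ need not lie in the image of $T$).

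First I would write down the co-unit. Recall $\tilde{\Pi}_f(\beta)$ consists of pairs $\tuple{g,\{d\}}$ with $g$ a section of $\beta$ over the fibre $f^{-1}``\{d\}$, tagged by $\{d\}\in TD$. Pulling back along $Tf:TC\to TD$ gives $Tf^{*}\tilde{\Pi}_f(\beta)$, whose members are $\tuple{\{c\},\tuple{g,\{f(c)\}}}$, and I define $\varepsilon_{\beta}:Tf^{*}\tilde{\Pi}_f(\beta)\to T\beta$ over $TC$ by $\tuple{\{c\},\tuple{g,\{f(c)\}}}\mapsto\{g(c)\}$. This is the local analogue of $ev'$: because the $\tilde{\Pi}_f$-construction forces $g$ to sit at the type of the singleton $\{f(c)\}$ (one level above the elements of its domain), evaluating $g$ at $c$ lands naturally in the codomain of $T\beta$ as the singleton $\{g(c)\}$, exactly as ``$\tuple{\{x\},f}=\{f(x)\}$'' does in the non-local case. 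Since $g$ is a section, $\beta(g(c))=c$, so $\varepsilon_{\beta}$ commutes with the maps down to $TC$.

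Next I would show $\tuple{\tilde{\Pi}_f(\beta),\varepsilon_{\beta}}$ is $(Tf^{*})$-co-universal to $T\beta$, which is equivalent to the stated natural bijection. Given any $\psi:Tf^{*}(\gamma)\to T\beta$ in $\cat{N}/TC$, I build its transpose $\overline{\psi}:\gamma\to\tilde{\Pi}_f(\beta)$ by setting, for $a\in A$ with $\gamma(a)=\{d\}$, $\overline{\psi}(a):=\tuple{g_a,\{d\}}$ where $g_a(c)$ is the unique $e$ with $\{e\}=\psi(\tuple{\{c\},a})$ for $c\in f^{-1}(d)$. Because $\psi$ lies over $TC$ we get $\beta(e)=c$, so each $g_a$ is a section and $\overline{\psi}$ lands in $\tilde{\Pi}_f(\beta)$ over $TD$; a short computation gives $\varepsilon_{\beta}\circ Tf^{*}(\overline{\psi})=\psi$, and extensionality of the $g_a$ forces uniqueness. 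Naturality of $\psi\mapsto\overline{\psi}$ in both $\gamma$ and $\beta$ then follows formally from the universal property of $\varepsilon_{\beta}$, establishing $Tf^{*}(-)\dashv_{T_{C}}\tilde{\Pi}_f(-)$.

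The main obstacle is not the categorical formalism but the stratification bookkeeping in the two displayed assignments: one must check that $\tuple{\{c\},\tuple{g,\{f(c)\}}}\mapsto\{g(c)\}$ and its inverse $a\mapsto\tuple{g_a,\{d\}}$ are each given by stratifiable set abstracts. This is precisely where the type-raising engineered into $\tilde{\Pi}_f$ pays off -- the section $g$ living at singleton-type is exactly what makes evaluation type-correct -- and it is the same device that renders $ev'$ stratifiable. I would also note that for $\gamma$ in the image of $T_{D}$ the bijection drops out purely formally from the already-proven unit adjunction $f^{*}\;_{T_{D}}\negthickspace\dashv\tilde{\Pi}_f$ together with the full-faithfulness of $T$ on slices and the fact that $T$, creating finite limits, preserves pullbacks, so that $Tf^{*}T_{D}\cong T_{C}f^{*}$; but since $\gamma$ ranges over all of $\cat{N}/TD$, the direct construction above is what is needed in general.
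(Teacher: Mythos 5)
The paper does not actually prove this proposition: the footnote immediately preceding it defers the full proof to the second author's thesis, so there is no in-text argument to measure yours against. Judged on its own terms, your proposal is correct and is the natural counit-side counterpart of the paper's proof of $f^{*}\;_{T_{D}}\negthickspace\dashv\tilde{\Pi}_{f}$: you build the counit $\theta:Tf^{*}\tilde{\Pi}_{f}\to T_{C}$ explicitly (the only (co)unit this form of relative adjunction can carry, as the background section notes), observe that $\tuple{\{c\},\tuple{g,\{f(c)\}}}\mapsto\{g(c)\}$ is stratifiable precisely because the sections $g$ in $\tilde{\Pi}_{f}(\beta)$ sit one type above the elements of their domains, and establish co-universality by the same transposition-plus-extensionality argument used for $ev'$ and for $\sigma_{(C,\rho)}$. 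Your reading of the statement --- $\beta$ an object of $\cat{N}/C$ with $T\beta$ denoting $T_{C}\beta$ --- is the only one under which both displayed hom-sets typecheck, and is surely what is intended. The closing remark that the case $\gamma\in\mathrm{im}(T_{D})$ follows formally from the unit adjunction together with $Tf^{*}T_{D}\cong T_{C}f^{*}$ and full faithfulness of $T$ on slices is also right, and you correctly flag that this shortcut does not cover arbitrary $\gamma:A\to TD$, which is why the direct construction is needed.
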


The category theoretic content of this pair of relative adjunctions is every slice category of an SPE is itself an SPE. We
refer to this as the \emph{pseudo-fundamental theorem of SPEs}. The set theoretic content of this result relates to NF's handling of dependent sums and products, indexed by an arbitrary set - a longstanding complication. The associated pseudo unit and counit provide the best, most coherent, form (of which the authors are aware) of arbitrary diagonal and projection functions in NF.

\subsection{A Short Word on $T$}

In our definition of an SPE we assumed that $T$ has the nice properties it possesses in $\cat{N}$, when instantiated as $X\mapsto \iota``X$, particularly with respect to limits. It's natural to ask whether $T$ might have an adjoint, or a monad structure. We prove that in $\cat{N}$, $T$ is not part of an adjunction, in any way.

\begin{thm} $T$ is not part of an adjunction, or a composite of an adjoint
  pair.\end{thm}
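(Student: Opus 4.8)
The plan is to lean on three facts already established for $T$: that it is full and faithful, that $T\cong\cat{N}(1,-)$, and — decisively — that $T$ fails to be essentially surjective, since the universe $V$ lies outside its essential image (NF's Cantor theorem gives $|\varusc{V}|<|V|$). I also use that $T$ fixes concretely finite objects, so $T1\cong 1$ and $T2\cong 2$. I read the statement as three claims to be dispatched in turn: $T$ has no right adjoint, $T$ has no left adjoint, and $T$ is neither the monad $G\circ F$ nor the comonad $F\circ G$ arising from an adjoint pair $F\dashv G$.

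First I would rule out a right adjoint. Suppose $T\dashv R$. Evaluating the adjunction bijection $\cat{N}(TX,A)\cong\cat{N}(X,RA)$ at $X=1$ and using $T1\cong 1$ yields a natural isomorphism $\cat{N}(1,A)\cong\cat{N}(1,RA)$, that is $T\cong T\circ R$. Since $T$ is full and faithful it reflects isomorphisms and may be cancelled on the left, giving $R\cong\mathrm{id}$. Then $T\dashv\mathrm{id}$, so $\cat{N}(TX,A)\cong\cat{N}(X,A)$ naturally in $A$, and the Yoneda lemma forces $T\cong\mathrm{id}$ — impossible, as $\mathrm{id}$ is essentially surjective while $T$ is not.

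Next, a left adjoint. Suppose $L\dashv T$. Evaluating $\cat{N}(LX,A)\cong\cat{N}(X,TA)$ at $X=1$ gives $\cat{N}(L1,A)\cong\cat{N}(1,\varusc{A})$. Since all functions are morphisms of $\cat{N}$, these hom-collections are the full function sets, so on cardinalities $|A|^{|L1|}=|\varusc{A}|=T|A|$ for every $A$. Taking $A=2$ and using $T2\cong 2$ gives $2^{|L1|}=2$, whence $|L1|=1$ and $L1\cong 1$; but then the same identity reads $|A|=T|A|$ for all $A$, i.e. every set is cantorian, which is refuted at $A=V$ by $T|V|=|\varusc{V}|<|V|$.

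Finally, the composite case, which I expect to be the crux. A presentation $T=G\circ F$ with $F\dashv G$ would equip $T$ with a monad structure, in particular a unit $\eta:\mathrm{id}\to T$; dually $T=F\circ G$ would give a comonad, in particular a counit $\varepsilon:T\to\mathrm{id}$. I would show neither natural transformation can exist. For a putative $\eta_A:A\to\varusc{A}$, naturality against each point $\bar a:1\to A$, together with $\eta_1\colon\{\ast\}\mapsto\{\ast\}$, forces $\eta_A(a)=\{a\}$; thus $\eta_A=\iota\restriction A$, whose graph is a set exactly when $A$ is strongly cantorian. Symmetrically, any counit $\varepsilon_A:\varusc{A}\to A$ is pinned to $\{a\}\mapsto a$, the inverse of $\iota\restriction A$, again not a morphism for general $A$. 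Hence $T$ admits neither a unit nor a counit, so it is neither a monad nor a comonad and cannot be the composite of an adjoint pair. The delicate point throughout is the bookkeeping of the one-type gap that makes $\iota$ fail to be an NF function: the non-existence of $\iota\restriction A$ as a set is precisely what obstructs every candidate unit, counit, and — via essential surjectivity — every genuine adjoint.
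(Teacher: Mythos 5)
Your handling of the composite case is correct, and it takes a genuinely different route from the paper's: you pin down any would-be unit $\eta_A$ (resp.\ counit $\varepsilon_A$) pointwise by naturality against global elements together with $T1\cong 1$, forcing it to be $\iota\restric A$ (resp.\ its converse), which is not a set for non--strongly cantorian $A$. The paper instead argues that a faithful left (resp.\ right) adjoint forces a monic unit $V\to TV$ (resp.\ an epic counit $TV\to V$), contradicting NF's Cantor theorem; your version is more elementary and equally valid.

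The two adjoint cases, however, share a genuine gap: you treat the hom-set bijections supplied by the hypothetical adjunction as if they were isomorphisms \emph{in} $\cat{N}$. An adjunction only gives bijections of hom-sets in the metatheory, and a meta-level bijection between two NF sets need not be an NF function --- that is the whole point of $T$, since $A$ and $\varusc{A}$ are always externally bijective but not in general isomorphic in $\cat{N}$. In your first argument, the identification of $\cat{N}(1,-)$ with the endofunctor $T$ is internal, but the adjunction isomorphism $\cat{N}(1,A)\cong\cat{N}(1,RA)$ is only external, so you do not obtain a natural isomorphism $T\cong T\circ R$ in $\cat{N}$ against which to cancel the fully faithful $T$. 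Indeed $R:=T$ itself satisfies $\cat{N}(1,A)\cong\cat{N}(1,TA)$ externally and naturally, yet $T\not\cong\mathrm{id}$, so the inference to $R\cong\mathrm{id}$ is invalid. In your second argument the same issue surfaces as an illegitimate passage from an external bijection to an equation of NF cardinals, $|A|^{|L1|}=T|A|$; externally one gets only $|A|=|\varusc{A}|$, which is true and yields no contradiction (and even internally $|\cat{N}(1,\varusc{A})|$ is $T^{2}|A|$, not $T|A|$). The paper's proofs survive externality because they use only morphism-level data: for a putative left adjoint of $T$, precomposition with the universal arrow $V\to TX_V$ is an \emph{internal} surjection from a set of size at most $|TV|$ onto $\hom(V,T2)$, which has size $|V|$, contradicting Cantor; for a putative right adjoint $G$, combining with the relative adjunction $\hom(TC,A\imp B)\cong\hom(A\times C,B)$ would make $G(A\imp B)$ a genuine exponential, contradicting McLarty's theorem. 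You need arguments of that kind (or an explicit assumption that the adjunction is internal, which would weaken the theorem) to repair those two cases.
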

\begin{proof}\
  \begin{enumerate}
  
  \item \emph{$T$ is not right adjoint}. If $T$ were a right adjoint functor,
    there would be an injection from morphisms $V\to T2$ into morphisms
    $TX_{V}\to T2$, where $X_{V}$ is the object component of the $T$-universal
    arrow for $V$.  However we know that $\left|V\to T2\right|=\left|V\right|$
    while for all $Y$ $\left|TY\to T2\right|\leqslant\left|TV\right|$.
  \item $T$\emph{ is not $G\circ F$ for any $F\dashv G$.} $T$ is an embedding,
    hence a monomorphism, which means that $F$ must also be monic and therefore
    faithful. Any adjunction with a faithful left adjoint must have a monic
    unit; but then there would be an injection
    $V\overset{\eta}{\longrightarrow}GFV=TV$, which is clearly false.
  \item \emph{$T$ is not $F\circ G$ for any $F\dashv G$.} As above, but now $G$
    would be faithful, and the counit would be epimorphic. This would mean a
    surjection $TV=FGV\overset{\epsilon}{\longrightarrow}V$, which is also
    impossible.
  \item \emph{$T$ is not a left adjoint.} Consider the relative adjunction
    \[\hom(TC,A\imp B)\cong\hom(A\times C,B).\] If $T$ has a right adjoint $G$,
    then $\hom(TC,A\imp B)\cong\hom(C,G(A\imp B))$, meaning that $G$ would be a
    functor that took $A\imp B$ and gave us the real categorical exponential
    $B^A$. But there can't always be such an exponential.
  \end{enumerate} \end{proof}

While T fails to be part of an adjunction in NF, nothing obviously prevents it from
being so in a variant of KF+Inf. If the fourth property holds:
\begin{cor} An SPE is a topos if and only if its $T$-functor has a right
  adjoint.\end{cor}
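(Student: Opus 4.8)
The plan is to derive everything from the relative adjunction $(A\times-)_{\;T}\negthickspace\dashv(A\imp-)$ established above, whose defining isomorphism $\hom(A\times C,B)\cong\hom(TC,A\imp B)$ is exactly item (4) of the preceding theorem. Reading that item as a construction rather than an obstruction handles the easy direction. So first I would assume $T$ has a right adjoint $G$, i.e.\ $\hom(TC,X)\cong\hom(C,GX)$ naturally, and paste this onto the relative adjunction at $X=A\imp B$ to obtain $\hom(A\times C,B)\cong\hom(TC,A\imp B)\cong\hom(C,G(A\imp B))$, naturally in $C$ and $B$. Since $A\imp-$ and $G$ are functors, the composite $B\mapsto G(A\imp B)$ is a right adjoint to $A\times-$; as $A$ was arbitrary, $\mathscr C$ is cartesian closed, with $B^{A}:=G(A\imp B)$.

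Having cartesian closure, I would then close off the topos structure using the surviving SPE axioms: $\mathscr C$ is regular (hence finitely complete, with creation of finite limits by $T$ only reinforcing this) by condition 1, and carries a subobject classifier by condition 1. Finite completeness together with cartesian closure and a subobject classifier is precisely the definition of an elementary topos, so this direction is complete.

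For the converse I would start from the fact that a topos is cartesian closed, so the genuine exponentials $B^{A}$ exist and the relative adjunction collapses to $\hom(TC,A\imp B)\cong\hom(C,B^{A})$; that is, $B^{A}$ represents $\hom(T-,A\imp B)$. Full faithfulness gives $\hom(TC,TY)\cong\hom(C,Y)$, so $GX=Y$ whenever $X\cong TY$, and I would use the isomorphisms $i_{A}\colon TA\cong 1\imp A$ and $e_{A,B}\colon TA\imp TB\cong T(A\imp B)$ to argue that $A\imp B$ may be identified with $T(B^{A})$, so that every object arising from $\imp$ already lies in the essential image of $T$. The remaining task is to promote these pointwise representing objects into a single right adjoint $G$, equivalently to show that the essential image of $T$ is coreflective in $\mathscr C$; the natural route is to build the coreflection $GX$ as a suitable finite limit, exploiting that $T$ creates finite limits.

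The hard part will be exactly this converse. The forward direction needs only that $\hom(T-,A\imp B)$ be representable, whereas a right adjoint to $T$ demands representability of $\hom(T-,X)$ for \emph{every} object $X$, including those not manifestly of the form $A\imp B$ (equivalently, not in the essential image of $T$). Bridging that gap — verifying that the pointwise transposes cohere into a functor and that $T\mathscr C$ is coreflective — is where cartesian closure must be used in earnest, and is the delicate step anticipated by the statement's hedging that ``nothing obviously prevents'' $T$ from being adjoint in a topos-like SPE.
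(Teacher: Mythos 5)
Your forward direction is exactly the argument the paper has in mind: the corollary is stated as a consequence of item (4) of the preceding theorem, which observes that a right adjoint $G$ to $T$, pasted onto $\hom(TC,A\imp B)\cong\hom(A\times C,B)$, makes $G(A\imp B)$ a genuine exponential; together with condition (1) of the SPE axioms (regularity, finite coproducts, subobject classifier) this yields an elementary topos. That half of your proof is correct and complete.

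The converse is where the genuine gap lies, and you have located it accurately but not closed it. Cartesian closure gives representability of $\hom(T-,A\imp B)$ by $B^{A}$, and full faithfulness gives representability of $\hom(T-,TY)$ by $Y$; but a right adjoint to $T$ requires representability of $\hom(T-,X)$ for \emph{every} $X$, and nothing in your argument produces a representing object for an $X$ outside the images of $\imp$ and $T$. Your proposed identification of $A\imp B$ with $T(B^{A})$ also needs care: those two objects corepresent the same functor only after precomposition with $T$, which does not by itself make them isomorphic unless the essential image of $T$ is dense in an appropriate sense. The natural way to finish would be to present an arbitrary $X$ as a finite limit of power objects or exponentials (e.g.\ via the monadicity of the contravariant power-object functor in a topos) and then use closure of representables under limits that exist together with the SPE's finite completeness --- but that is precisely the step you defer. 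For what it is worth, the paper supplies no argument for this direction either: the corollary is asserted with only a pointer to ``the fourth property,'' which yields one implication. Your proposal, which completes one direction and honestly flags the other as open, is therefore more explicit than the source about where the remaining work lies, but as it stands it does not prove the ``only if'' half.
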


 Another curiosity is the relationship between T and $hom(1,-)$? Both $\cat{N}$ and a well pointed topos are cases of a generalized SPE where T is naturally isomorphic to $hom(1,-)$. In the case of $\cat{N}$, the natural isomorphism provides interesting variants of the Curry-Howard correspondence.  In the following section, the natural isomorphism allows us to form structures we would not expect to, in a stratified theory, suggesting T-functor is natural, rather than a ``syntactic trick.''

\subsection{A Topos Subcategory}

  \begin{thm}
  	The full subcategory of $\cat{N}$ containing the strongly cantorian sets is a topos. Rosser's Axiom of Counting (see \cite{lfm}) would imply this topos has an NNO.
  \end{thm}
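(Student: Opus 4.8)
The plan is to study the full subcategory $\cat{N}_{sc}\hookrightarrow\cat{N}$ spanned by the strongly cantorian sets, and to exploit the fact that the pathology of $\cat{N}$ vanishes there. The key observation is that the singleton map $s_A\colon A\to TA$, $a\mapsto\{a\}$, has a set as its graph \emph{exactly} when $A$ is strongly cantorian; so on $\cat{N}_{sc}$ the maps $s_A$ assemble into a natural isomorphism $\mathrm{id}\cong T$. Every $T$-relative adjunction and every ``pseudo'' universal property established earlier for $\cat{N}$ therefore collapses to a genuine one on $\cat{N}_{sc}$, which should hand us a bona fide elementary topos. The work splits into (i) showing $\cat{N}_{sc}$ contains the objects a topos requires, i.e. that strongly cantorian sets are closed under the relevant constructions, and (ii) transporting the collapsed structure.

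For (i) I would record the closure properties. Closure under subsets is immediate: if $B\subseteq A$ then $\iota\restric B=(\iota\restric A)\cap(B\times V)$ is a subset of a set. Concrete finiteness makes $1$ and $2=\{\bot,\top\}$ strongly cantorian. Closure under binary products holds because, given $\iota\restric A$ and $\iota\restric B$ as sets, the assignment $\tuple{\{a\},\{b\}}\mapsto\{\tuple{a,b}\}$ is \emph{type-level} (both sides lie one type above $\tuple{a,b}$), so $\iota\restric(A\times B)$ arises as a composite of set maps. The linchpin, and the step I expect to be the main obstacle, is closure under power sets: from $s=\iota\restric A$ one forms the set map $j(s)\colon\mathcal P(A)\to\mathcal P(\varusc{A})$, $S\mapsto\varusc{S}$; and on subsets of $\varusc{A}$ the map $W\mapsto\{\bigcup W\}$ is again type-level (for $W=\varusc{S}$ one has $\bigcup W=S$, and $W$ shares a type with $\{S\}$), so its composite with $j(s)$ is exactly $\iota\restric\mathcal P(A)$. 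Since $A\imp B\subseteq\mathcal P(A\times B)$, closure under power sets and subsets yields closure under function spaces.

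With these closures, part (ii) is largely formal. $\cat{N}_{sc}$ has a terminal object $1$, binary products, and equalizers (each a subset of its domain, hence strongly cantorian), so it is finitely complete, with limits computed as in $\cat{N}$. The object $2$ is still a subobject classifier, since every subobject in $\cat{N}_{sc}$ is a subset and is classified by its characteristic map into $2$ as in $\cat{N}$. For exponentials I invoke the relative adjunction $(A\times-)_{\;T}\negthickspace\dashv(A\imp-)$, which gives $\hom(A\times C,B)\cong\hom(TC,A\imp B)$; composing with the set isomorphism $s_C\colon C\cong TC$ produces $\hom(A\times C,B)\cong\hom(C,A\imp B)$ naturally in $C$, and fullness of $\cat{N}_{sc}$ together with $A\imp B$ being strongly cantorian exhibits $A\imp B$ as the genuine exponential $B^A$. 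Hence $\cat{N}_{sc}$ is finitely complete, cartesian closed and has a subobject classifier, i.e. a topos. (Equivalently, $\mathrm{id}\cong T$ turns the pseudo-powerobject $\mathcal P(A)$ into a genuine power object.)

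For the final clause, recall that Rosser's Axiom of Counting is equivalent to ``$\Nn$ is strongly cantorian'' \cite{forster2}, so under Counting $\Nn\in\cat{N}_{sc}$, and I claim $(\Nn,0,S)$, with $S$ the successor map, is an NNO. Given $(X,x_0,u)$ with $X$ strongly cantorian, define $f\colon\Nn\to X$ by $\tuple{n,x}\in f$ iff some $h$ with domain $\{m:m\le n\}$ satisfies $h(0)=x_0$, $h(m+1)=u(h(m))$ for $m<n$, and $h(n)=x$; since type-level pairing lets the index $m$ and the value $h(m)$ share a type, this condition is stratified, so $f$ is a set, and totality and uniqueness follow by induction (the sets $\{n:P(n)\}$ existing precisely because the defining $P$ is stratified). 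This is the NNO universal property in $\cat{N}_{sc}$. The subtlety worth flagging is that the recursion itself needs no extra axiom---it is already stratified, hence available in plain NF---so Counting does its work entirely by placing $\Nn$ inside the subcategory; without it $\Nn$ need not be an object of $\cat{N}_{sc}$ and the NNO question does not arise in the intended form.
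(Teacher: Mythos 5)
Your proof is correct, and its core --- the closure of the strongly cantorian sets under subsets, binary products, and power sets, each time by exhibiting $\iota\restric(-)$ as a composite of type-level set maps --- is exactly the paper's argument (your power-set step, $S\mapsto\iota``S\mapsto\{\bigcup \iota``S\}=\{S\}$, is the same composite the paper describes). Where you diverge is in how the topos structure is then assembled. The paper finishes by citing the earlier representability of $\mathsf{Sub}(TA\times-)$ by $\sc{A}$ and observing that on strongly cantorian objects $\mathsf{Sub}(TA\times-)\cong\mathsf{Sub}(A\times-)$, so the subcategory has genuine power objects; you instead collapse the relative adjunction $(A\times-)_{\;T}\negthickspace\dashv(A\imp-)$ along the natural isomorphism $\mathrm{id}\cong T$ (which exists on this subcategory precisely because the graphs of the singleton maps are sets) to get honest exponentials, and add the subobject classifier. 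Both characterizations of a topos are standard and both routes are available here; the paper's is shorter given its earlier representability lemma, while yours makes vivid the guiding slogan that every $T$-relative structure in $\cat{N}$ becomes absolute where $T\cong\mathrm{id}$, which is the content of the subsequent theorem about fixed points of $T$ in a general SPE. You also supply a proof of the NNO clause, which the paper asserts without argument; your observation that the stratified recursion is already available in plain NF and that Counting serves only to place $\Nn$ among the strongly cantorian sets is the right way to read that clause.
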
 
  
  \begin{proof}
  The strongly cantorian sets are closed under taking subsets, since if
$B\subseteq A$, and $A$ is strongly cantorian, then $(\iota\restriction
A)\restriction B$ is also a set; therefore they are closed under
equalisers. They are also closed under products, since for $A\times B$ we can
take the two composites $\iota_A\circ\pi_1$ and $\iota_B\circ\pi_2$ and form the
function $\lambda x.(\tuple{\iota_A\circ\pi_1(x),\iota_B\circ\pi_2(x)})$ as an
instance of ``$\lambda x.(\tuple{f(x),g(x)})$''. This gives us an isomorphism
$A\times B\to TA\times TB$, which we can compose with the isomorphism
$\tuple{\{a\},\{b\}}\mapsto\{\tuple{a,b}\}$ to obtain the singleton function on
$A\times B$. Finally, the strongly cantorian sets are closed under powersets,
since if $A$ is strongly cantorian,
$\lambda x.(\iota_A``x):\sc{A}\to\sc{\iota``A}$ is a set, which can then be
composed with the isomorphism $\iota``A\mapsto\iota(A)$ to give the singleton
function on $\sc{A}$. We have mentioned that $\mathsf{Sub}(TA\times-)$ is
representable by $\sc{A}$; but also $\mathsf{Sub}(TA\times-)$ is isomorphic to
$\mathsf{Sub}(A\times-)$, so that the strongly cantorian sets have power
objects.
\end{proof}

 Everything we rely on for the topos structure of the strongly
cantorian sets holds of the finite sets in KF+Inf, so we may expect a more general result:

\begin{thm}
	Every SPE contains a full subtopos whose objects are the fixed points of the T functor.
\end{thm}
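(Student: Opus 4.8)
The plan is to generalise the proof of the preceding theorem on strongly cantorian sets, replacing ``strongly cantorian'' throughout by ``$T$-fixed point''. Write $\mathscr{C}_{T}$ for the full subcategory of the ambient SPE $\mathscr{C}$ whose objects are those $A$ with $A\cong TA$. I would establish that $\mathscr{C}_{T}$ is finitely complete, cartesian closed, and has a subobject classifier, which together make it an elementary topos; power objects then come for free as $2^{A}$. The first step is finite limits. Since $T$ creates, hence preserves, finite limits, we have $T1\cong1$ (the empty limit), and $T$ carries a limiting cone over a finite diagram $F$ of fixed points to a limiting cone over the isomorphic diagram $TF\cong F$; thus $\lim F\cong T\lim F$ whenever every object of $F$ is fixed, so $\mathscr{C}_{T}$ is closed under finite limits and these are computed as in $\mathscr{C}$. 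A consequence I would record is that monomorphisms in $\mathscr{C}_{T}$ coincide with those of $\mathscr{C}$, since monicity is detected by the kernel pair, a finite limit computed identically in both categories; hence subobjects do not change on passing to the subcategory.

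Next, cartesian closure. For fixed $A,B$ the natural isomorphism $e_{A,B}\colon TA\imp TB\cong T(A\imp B)$ gives $A\imp B\cong TA\imp TB\cong T(A\imp B)$, so $A\imp B$ is again a $T$-fixed point and $\imp$ restricts to a bifunctor on $\mathscr{C}_{T}$. Moreover the relative adjunction $(TA\times-)\dashv_{T}(A\imp-)$ supplies, for every $C$, a natural isomorphism $\hom(TA\times C,TB)\cong\hom(C,A\imp B)$; specialising to fixed $A,B$ and replacing $TA,TB$ by the isomorphic $A,B$ turns this into the genuine exponential transposition $\hom(A\times C,B)\cong\hom(C,A\imp B)$, natural in $C$. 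Hence $A\imp B$ is a bona fide exponential $B^{A}$ inside $\mathscr{C}_{T}$, and $\mathscr{C}_{T}$ is cartesian closed.

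The third and decisive step is the subobject classifier, where the crux is to show that the classifier $2$ of $\mathscr{C}$ is itself a $T$-fixed point. Granting $T2\cong2$, fullness of $\mathscr{C}_{T}$, the agreement of monomorphisms, and closure under the pullbacks that exhibit classification show that the same $t\colon1\to2$ classifies subobjects inside $\mathscr{C}_{T}$. To see $T2\cong2$ I would run the chain $T2\cong1\imp2\cong P1\cong2$: the first isomorphism is axiom (3a), namely $i_{2}$; for the last two I use representability of $\mathsf{Sub}(TA\times-)$, whose representing object $PA$ is the pseudo-power object $A\imp2$, so that $P1\cong1\imp2$, while $P1$ represents $\mathsf{Sub}(T1\times-)\cong\mathsf{Sub}(-)$ and is therefore the subobject classifier by Yoneda. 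Power objects in $\mathscr{C}_{T}$ then require no further work: for fixed $A$, $PA\cong A\imp2=2^{A}$ is an exponential of fixed points, hence fixed, and represents $\mathsf{Sub}(A\times-)$ on $\mathscr{C}_{T}$.

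The main obstacle is precisely this last identification $T2\cong2$. In the concrete theorem it was free because $2$ is concretely finite, hence strongly cantorian; abstractly one has no such shortcut, and one must be careful, since the lemma used to identify $PA$ with $A\imp2$ itself leans on $T2\cong2$. I would break the apparent circularity by first establishing $P1\cong2$ directly from representability of $\mathsf{Sub}(T1\times-)=\mathsf{Sub}(-)$ and the defining property of the classifier (this uses only $T1\cong1$), and only then feeding $T2\cong2$ back to justify the pseudo-power object identification for general $A$; an alternative route is to argue that $Tt\colon1\cong T1\to T2$ is again a subobject classifier on the image of $T$ and to transport the conclusion along the full-faithfulness of $T$. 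Everything else is a routine transcription of the strongly cantorian argument, with ``$T$-fixed point'' in place of ``strongly cantorian'' and the SPE axioms in place of the concrete closure computations.
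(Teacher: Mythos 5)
The paper offers no proof of this theorem at all: it is stated as the expected generalisation of the strongly cantorian case, so your plan of transcribing that argument with the SPE axioms in place of the concrete closure computations is exactly the intended route, and you have correctly isolated the point the paper glosses over, namely $T2\cong 2$. Unfortunately neither of your proposed repairs closes that gap from the axioms as stated. The chain $T2\cong 1\imp 2\cong P1\cong 2$ needs the middle isomorphism $1\imp 2\cong P1$, but nothing in the SPE axioms identifies the representing object of $\mathsf{Sub}(TA\times-)$ with $A\imp 2$; the paper's proof of the lemma that makes that identification is carried out in $\cat N$ and explicitly invokes ``$2$ is concretely finite, hence $T2\cong2$'' in order to replace the codomain $T2$ of $ev'_A$ by $2$ --- precisely the circularity you worry about, and proving $P1\cong 2$ first (which is fine, via $T1\cong1$ and Yoneda) does not break it, because the step $1\imp2\cong P1$ remains unjustified. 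Your alternative route --- that $Tt\colon T1\to T2$ is a subobject classifier ``on the image of $T$'' --- only shows that $Tt$ classifies monomorphisms of the form $Tm$; since the axioms do not say that every subobject of $TB$ arises this way (in $\cat N$ this is the concrete fact that every subset of $\iota``B$ is $\iota``B'$ for some $B'\subseteq B$), you cannot conclude that $T2$ represents $\mathsf{Sub}(-)$ and hence is isomorphic to $2$. So ``$2$ is a fixed point of $T$'' needs to be added as a hypothesis, extracted from the unstated coherence conditions 3(e), or derived from some further assumption such as $T$ preserving finite coproducts together with $2\cong1+1$.

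A second, smaller gap: a fixed point $A\cong TA$ carries no canonical isomorphism, and your closure of $\mathscr{C}_T$ under finite limits tacitly requires the chosen isomorphisms $\theta_A\colon A\cong TA$ to be natural with respect to the maps of the diagram. For products this is harmless (the diagram is discrete), but for the equaliser of $f,g\colon A\rightrightarrows B$ you can only identify $TE$ with the equaliser of $\theta_B^{-1}\circ Tf\circ\theta_A$ and $\theta_B^{-1}\circ Tg\circ\theta_A$, which need not be $f$ and $g$. In $\cat N$ this issue is invisible because a strongly cantorian set comes with the singleton map as its isomorphism and $Tf\circ\iota_A=\iota_B\circ f$ holds automatically; abstractly the subtopos should be taken on objects equipped with (or admitting) an isomorphism to their $T$-image varying naturally, which is presumably what the theorem intends by ``fixed points.'' With those two points repaired, the rest of your transcription --- $A\imp B$ fixed via $e_{A,B}$, the exponential transposition obtained by specialising the relative adjunction along $A\cong TA$, $B\cong TB$, and power objects as $2^A$ --- is correct and is the argument the paper has in mind.
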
 

\section{NF's Internal Category Theory}

To this point, we have developed the category $\cat N$ relative to some other \textit{meta}
theory. In this section we study its role as a set theoretic foundation for category theory.\footnote{A full study of this is carried out in separate work by Vidrine and Lewicki.} We sketch a basic theory of small categories in NF, and prove a Stratified Yoneda Lemma. The final result is a theorem stating the internal category of NF sets is a full internal subcategory. The content of this result is the codomain fibration provides an appropriate notion of a category of sets in (fibred) $\cat{N}$-category theory, despite properly embedding in the externalization of the internal category of NF sets.

The representation of internal diagrams as fibres is motivated by size limitations in classical set theory. In $\cat{N}$, this and the interpretation of ``elements'' as global elements provide a natural motivation for the T-functor.

\subsection{Review of Internal Category Theory}
For a thorough introduction, the reader is
referred to \cite{johnstone}. The basic idea is that the axioms of \naive{}
category theory can be represented diagramatically, generalizing the definition of small categories in $\emph{Set}$ to an arbitrary category with finite limits. 
\begin{mydef}
  Working in some ambient category $\cat{E}$, an internal category $\cat{C}$ is
  a collection of objects and morphisms satisfying the following diagram, with the axioms of category theory expressed as commutative diagrams.
\end{mydef}

\begin{equation}
  \xymatrix{C_1\times_{C_0} C_1 \ar[r]^<<<<<{m} & C_1 \ar@/^/[r]^{d_0} \ar@/_/[r]_{d_1} & C_0 \ar[l]|{i}}
\end{equation}

Intuitively, one thinks of $C_0$ and $C_1$ as the objects of objects and morphisms, respectively; $d_0, d_1, m$ as domain, codomain and composition, and $i$ as the map associating each object of $\cat{C}$ with its identity map. An \emph{internal functor} $\cat{F}$ between internal categories $\cat{C}$ and $\cat{D}$ is a pair of morphisms $F_0:C_0 \to D_0$ and $F_1:C_1 \to D_1$, commuting with the diagrams in the expected way. Natural transformations are defined as maps $C_0 \to D_1$, also satisfying the expected conditions. The resulting 2-category $\cat{cat(E)}$ is nevertheless external for classical set theory. One of the great foundational attributes of NF is $\cat{cat(N)} \in \cat{cat(N)}$.

A presheaf $F$ (from an internal category $\cat{C}$ to the ambient category $\cat{E}$) is represented as a $\gamma_0: F_0 \to C_0$, where the fibre over each element of $C_0$ is its image under $F$.

\begin{mydef}

  Let $\cat{C} \in \cat{cat(E)}$. An \textbf{internal diagram} F on $\cat{C}$ is
  a collection $(F_0, \gamma_0, e)$ such that:

  (i) $\gamma_0:F_0 \to C_0$ $\in \cat{E}/C_0$

  (ii) $e: F_1=F_0\times_{d_0}C_1 \to F_0$

  with the following commutatitivity conditions: $$\gamma_0 \circ e = d_1 \circ
  \pi_2$$ $$e(1\times i) = 1_{F_0}$$ $$e(e\times 1)= e(1\times m): F_2 =
  F_0\times_{C_0} C_1 \times_{C_0} C_1 \to F_0.$$

\end{mydef}

It falls out of the definition of an internal diagram that $F$ itself may be
viewed as an internal category (externally, its category of points) with $F_n (n \geq 1) = F_0 \times_{C_0} C_n$, and
$d_0, d_1$ given as $\pi_1$ and $e$, respectively. Furthermore, we may define a
functor between this new internal category $\cat{F}$ and $\cat{C}$, with
$\gamma_n = \pi_2: F_n \to C_n$ for $n \geq 1$. Hence, we may view $\cat{E^C}$ as a full subcategory of
$\cat{cat(E)/C}$.

As $\cat{N}$ has finite limits, the existence of $\cat{cat(N)}$ is obvious. Futher, the forgetful functor $\cat{cat(N)} \to \cat{N \times N}$ creates finite limits. KF, meanwhile, requires a Dedekind-infinite set to support a robust theory of categories.

\subsection{Representables and Yoneda in NF}

 In NF, $\cat{cat(N)}$ is an internal category, so the standard implementation of internal presheaves may seem unnecessary. It turns out to be far more coherent than the more intuitive definition, suggested by the set theory of NF. In this context, we are able to clarify the typing challenges of representable presheaves and the Yoneda Lemma, and develop a deeper understanding of the nature of the $T$ functor and global elements.

\begin{thm}[\cite{johnstone}]
  Let $\cat{C} \in \cat{cat(E)}$. Then there is an adjunction $R \dashv U$
  $$U:\cat{E^C} \to \cat{E}/C_0; F \mapsto (\gamma_0: F_0 \to C_0)$$ $$
  R:\cat{E}/C_0 \to \cat{E^C} $$ R mapping $\gamma: X \to C_0$ in $\cat{E}/C_0$
  to the discrete opfibration $R(\gamma)$ below.

$$\xymatrix{X\times_{C_0} C_1\times_{C_0} C_1 \ar@/^/[r]^>>>{\pi_{1,2}} \ar@/_/[r]_>>>{1\times m} \ar[d]_{\pi_3} & X \times_{C_0} C_1 \ar[d]^{d_1 \pi_2} \\ 
C_1 \ar@/^/[r]^{d_0} \ar@/_/[r]_{d_1} & C_0}$$

\end{thm}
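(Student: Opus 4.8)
The plan is to realise the adjunction $R \dashv U$ through a universal arrow, taking as unit the map $\eta_\gamma := \tuple{1_X, i\circ\gamma} : X \to X\times_{C_0}C_1$, which picks out for each element of $X$ the identity morphism on its $\gamma$-image. First I would check that the displayed $R(\gamma)$ genuinely is an internal diagram in the sense of the definition above: its structure map is $d_1\pi_2 : X\times_{C_0}C_1 \to C_0$ and its action is $1\times m$, and the three required identities each reduce to an axiom of the internal category $\cat{C}$ --- the typing condition $\gamma_0 e = d_1\pi_2$ to the codomain law for $m$, the unit law $e(1\times i)=1$ to $m(1\times i)=\pi_1$, and the associativity $e(e\times 1)=e(1\times m)$ to the associativity of $m$. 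Functoriality of $R$, by post-composing a slice morphism with $1_{C_1}$, and of $U$ are then immediate, since the action of $R(\gamma)$ only touches the $C_1$-component.

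Next I would construct the transpose. Given an internal diagram $F=(F_0,\gamma_0^F,e^F)$ and a slice morphism $\psi:\gamma\to UF$ (so $\psi:X\to F_0$ with $\gamma_0^F\circ\psi=\gamma$), set
\[
\bar\psi := e^F\circ(\psi\times_{C_0}1_{C_1}) : X\times_{C_0}C_1 \to F_0,
\]
which on points sends $\tuple{x,f}$ to the result of acting on $\psi(x)$ by $f$; this is well typed because $\gamma_0^F\psi=\gamma$ matches the pullback defining $F_1$. The compatibility $\gamma_0^F e^F = d_1\pi_2$ shows $\bar\psi$ lies over $C_0$, and the associativity square for $F$ shows $\bar\psi$ intertwines the two actions, so $\bar\psi$ is a morphism of internal diagrams. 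The triangle $U(\bar\psi)\circ\eta_\gamma=\psi$ is exactly the unit law $e^F(1\times i)=1_{F_0}$ applied to $F$.

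The crux is uniqueness, which amounts to the fact that every element of $R(\gamma)$ is generated from an identity element by the action: one checks that $\tuple{x,f}=e^{R(\gamma)}(\eta_\gamma(x),f)$. Consequently, if $\phi:R(\gamma)\to F$ is any diagram morphism with $U(\phi)\circ\eta_\gamma=\psi$, then preservation of the action forces $\phi\tuple{x,f}=e^F(\phi\,\eta_\gamma(x),f)=e^F(\psi(x),f)=\bar\psi\tuple{x,f}$, so $\phi=\bar\psi$. This establishes $\eta_\gamma$ as universal, hence $R\dashv U$; the naturality of the associated hom-bijection $\cat{E}^{\cat{C}}(R(\gamma),F)\cong\cat{E}/C_0(\gamma,UF)$ is then formal.

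The one genuinely delicate point, when the ambient category is $\cat{N}$, is stratification rather than the categorical bookkeeping: the pullbacks $X\times_{C_0}C_1$ and $F_0\times_{d_0}C_1$, the pairing $\tuple{1_X,i\gamma}$, and the composite $e^F\circ(\psi\times 1)$ must all be presented by stratified abstracts. Because the internal finite-limit constructions used above keep domain and codomain data at a common type, I expect the action map and the transpose to type cleanly; the heaviest bookkeeping will be verifying that $\bar\psi$ preserves the action, which rests on the associativity identity $e(e\times 1)=e(1\times m)$ for $F$, though this remains conceptually routine and is the standard argument of \cite{johnstone} carried out with attention to typing.
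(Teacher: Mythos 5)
Your proof is correct and is precisely the standard argument: the paper offers no proof of this theorem, citing it to Johnstone, and your construction --- the unit $\tuple{1_X,\, i\circ\gamma}$ inserting identity morphisms, the transpose $e^F\circ(\psi\times_{C_0}1_{C_1})$, and uniqueness via the observation that every element $\tuple{x,f}$ of $R(\gamma)$ is obtained from $\eta_\gamma(x)$ by the action --- is exactly the proof found there, with each verification correctly reduced to the corresponding internal-category or internal-diagram axiom. Nothing to correct.
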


Diagrams in the image of R are said to be
\textit{representable} - as we might expect, this borrows from intuition in
$\cat{Set}$. Taking $\cat{C}$ to be an internal (i.e. small) category, one can
regard an object U of $\cat{C}$ as a morphism $u:1 \to C_0$, and $R(u)$
corresponds to the covariant representable functor: $\cat{C}(U,-):\cat{C} \to
\cat{Set}$. Specific homsets are defined by pullback, and we obtain the following lemma, useful for working in a stratified theory.

\begin{lem}
  For $\cat{C} \in \cat{cat(N)}$. $Hom(\cat{C}) :=
  \{\cat{C}(U,V)| U, V \in C_0\}$ is the same size as a set of singletons.
\end{lem}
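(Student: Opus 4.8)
The plan is to realise $Hom(\cat{C})$ as the collection of fibres of a single morphism and then reuse the fibre computation already carried out for coequalisers of kernel pairs. Working inside $\cat{N}$, form the product $C_0\times C_0$ (available since NF has type-level Quine pairs) together with the morphism $p:=\tuple{d_0,d_1}:C_1\to C_0\times C_0$, which is a genuine arrow of $\cat{N}$ because $d_0,d_1$ are. For $U,V\in C_0$ the fibre $p^{-1}(\tuple{U,V})$ is exactly $\{f\in C_1\mid d_0 f=U\wedge d_1 f=V\}=\cat{C}(U,V)$, so the nonempty hom-sets of $\cat{C}$ are precisely the nonempty fibres of $p$.

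Next I would invoke the construction from the proof that NF has coequalisers of kernel pairs: the collection of nonempty fibres of any set function is the same size as the set of singletons of elements of its image. Concretely, the relevant bijection is $\cat{C}(U,V)\mapsto\{\tuple{U,V}\}$. This is a well-defined, injective function on nonempty hom-sets, since a nonempty fibre $\cat{C}(U,V)$ determines $\tuple{U,V}$ (pick any $f$ in it and read off $U=d_0 f$, $V=d_1 f$). Crucially, it is stratified exactly because a fibre $\cat{C}(U,V)\subseteq C_1$ sits one type above the pair $\tuple{U,V}$ over which it lies, so that $\cat{C}(U,V)$ and the singleton $\{\tuple{U,V}\}$ receive the same type; this is precisely the type-raising phenomenon absorbed by the $T$-functor. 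Hence the nonempty hom-sets are in genuine set-bijection with $\iota``(\mathrm{im}\,p)$, a set of singletons.

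It remains only to account for the empty hom-set. If every pair of objects carries a morphism then $\mathrm{im}\,p=C_0\times C_0$ and $Hom(\cat{C})\cong\iota``(C_0\times C_0)$ outright. Otherwise there is a pair $\tuple{U_0,V_0}\notin\mathrm{im}\,p$, and extending the bijection by $\emptyset\mapsto\{\tuple{U_0,V_0}\}$ exhibits $Hom(\cat{C})$ as the same size as $\iota``(\mathrm{im}\,p\cup\{\tuple{U_0,V_0}\})$, again a set of singletons. Either way $Hom(\cat{C})$ is the same size as a set of singletons.

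I expect the only real obstacle to be the stratification bookkeeping in the middle step, namely verifying that $\cat{C}(U,V)$ and $\{\tuple{U,V}\}$ genuinely share a type so that the indexing map is a legitimate set; this is exactly the point where the apparent type clash (a hom-set living one level above its indices) is defused by passing to singletons. The empty-fibre adjustment is a routine edge case, and everything else is the standard fibre-of-a-map computation applied verbatim.
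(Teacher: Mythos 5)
Your proposal is correct and follows exactly the paper's route: the paper disposes of the lemma in one line by observing that $Hom(\cat{C})$ is the collection of fibres of $\langle d_0,d_1\rangle:C_1\to C_0\times C_0$ and then appealing to the fact (from the coequalisers-of-kernel-pairs construction) that the fibres of any function are the same size as a set of singletons. You have merely filled in the stratification bookkeeping and the empty hom-set edge case that the paper leaves implicit.
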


The lemma is an easy consequence of the definition of $Hom(\cat{C})$ as the collection of fibres for the morphism: $\langle d_0, d_1 \rangle : C_1 \to C_0 \times C_0^{op}$.

The counit of $R \dashv U$ gives the internal version of the Yoneda Lemma. The classical form: $\cat{Set^C}(\cat{C}(U,-),F) \cong F(U)$ can be interpreted
internally in $\cat{N}$ as $\cat{N^C}(R(u),F) \cong
\cat{N}/C_0(u,\gamma_0)$. In NF, the departure from
the classical statement, as with Cantor's Theorem, is a direct result of $\cat{N}(1,-)$ providing a \emph{proper} embedding of $\cat{N}$ into itself.

\begin{lem}[NF-Yoneda]
  Given some NF-small category $\cat{C}$ and some covariant presheaf $F:\cat{C}
  \to \cat{N}$, $$Nat(\cat{C}(U,-),F) \cong \cat{N}(1,F(U)) \cong T(F(U)).$$ Furthermore, the
  set of natural transformations is the same size as a set of singletons.
  \footnote{Notice, $Can(F(U)) \to Nat(\cat{C}(U,-),F) \cong F(U)$,
    the standard Yoneda Lemma.}
\end{lem}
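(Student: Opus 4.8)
The plan is to obtain the two displayed isomorphisms separately and then read off the size claim. The second isomorphism $\cat{N}(1,F(U))\cong T(F(U))$ requires essentially no work: it is the instantiation at the object $F(U)$ of the identification, recorded earlier, of the global sections functor $\cat{N}(1,-)$ with the $T$-functor $X\mapsto\iota``X$. Thus $\cat{N}(1,F(U))\cong\iota``F(U)=T(F(U))$, and the burden of the proof is the first isomorphism.

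For the first isomorphism I would leverage the internal Yoneda already in hand, namely the adjunction $R\dashv U$ (the counit of which was noted to yield the internal Yoneda Lemma), giving $\cat{N^C}(R(u),F)\cong\cat{N}/C_0(u,\gamma_0)$ for $u:1\to C_0$ naming the object $U$ and $\gamma_0:F_0\to C_0$ presenting $F$. Since $R(u)$ is exactly the covariant representable $\cat{C}(U,-)$, the left-hand side is $Nat(\cat{C}(U,-),F)$. The remaining task is to identify the slice-hom $\cat{N}/C_0(u,\gamma_0)$ with $\cat{N}(1,F(U))$: a morphism there is a map $s:1\to F_0$ with $\gamma_0\circ s=u$, that is, a global element of $F_0$ landing in the fibre $\gamma_0^{-1}``\{U\}=F(U)$; such $s$ are precisely the global elements $1\to F(U)$. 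Chaining these gives $Nat(\cat{C}(U,-),F)\cong\cat{N}/C_0(u,\gamma_0)\cong\cat{N}(1,F(U))\cong T(F(U))$. The size statement is then immediate, since $T(F(U))=\iota``F(U)$ is by construction a set of singletons.

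The step I expect to be the main obstacle is verifying that the middle identification respects stratification, and thereby pinpointing where the $T$-discrepancy with the classical Yoneda enters. Concretely, I would track the type of the canonical witness $\eta_U(1_U)\in F(U)$ as one passes from a natural transformation $\eta$ to a global element of the fibre. The data of $\eta$ is assembled one type up from $F(U)$, so the correspondence lands in $\iota``F(U)$ rather than in $F(U)$ itself; this is precisely the phenomenon responsible for $\cat{N}(1,-)$ being a \emph{proper} embedding, and the reason the footnote's collapse $Nat(\cat{C}(U,-),F)\cong F(U)$ is available only when $F(U)$ is cantorian, so that $\iota``F(U)\cong F(U)$. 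Making the type of $\eta_U(1_U)$ explicit through the slice-hom identification is the one genuinely stratified check, and confirms that the object assembled is $T(F(U))$ and not $F(U)$. Everything else is a routine transcription of the classical argument with the internal-diagram structure map $e$ supplying the presheaf action.
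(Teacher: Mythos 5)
Your proposal takes essentially the same route as the paper's own proof: both pass through the adjunction $R\dashv U$ to get $Nat(\cat{C}(U,-),F)\cong\cat{N}/C_0(u,\gamma_0)$, identify the slice-hom with $\cat{N}(1,F(U))$ via factorization through the pullback defining $F(U)$, and then invoke the identification of $\cat{N}(1,-)$ with $T$. Your closing discussion of where the type-shift enters is sound supplementary commentary consistent with the paper's footnote on the cantorian case, but it is not a different argument.
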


\begin{proof}
When we write $\cat{C}(U,-)$ we are referring to the discrete opfibration $R(u)$, in the natural isomorphism, $\cat{N^C}(R(u),F) \cong
\cat{N}/C_0(u,\gamma_0)$. We define F(U) as the following pullback:

$$\xymatrix{ F(U) \ar[r] \ar[d] & F_0 \ar[d]^{\gamma_0} \\
1 \ar[r]_{u} & C_0}$$

We have a bijection between the following hom-sets $\cat{N}/C_0(u,\gamma_0)$ and $\cat{N}(1,F(U))$ by unique factorization of any such $1 \to F_0$ through the pullback defining $F(U)$. The bijection $\cat{N}(1,F(U)) \cong T(F(U))$ is obvious.
\end{proof}

The Yoneda Lemma is a direct generalisation Cayley's Theorem for group representation.\footnote{See any introductory textbook on category theory, or consider the Yoneda Lemma above, in the case where $C_0$ is the terminal object (singleton set) and there is a twist isomorphism on $C_1$ expressing that each morphism is invertible.} The concept of representation leads us to consider an uncomfortable side effect of set theoretic foundations: their semantics are dependent on decisions we make, and rules we set, for implementation of mathematical objects. NF has some fairly robust literature on this subject: ordered pairs, ordinals and cardinals being the obvious examples. For category theory, the representation of indexed families as fibres is a key challenge - explicitly, the restriction of indexing sets to $\iota``V$.\footnote{There is a sense in which ordinals seem like they should just be indexed families with a well-ordering on the indexing set. In unpublished work, the authors have actually developed notions of ``Quine Sequences'' of ordinal size that get NF closer to this - but, of course, a consistent theory with a set of all ordinals can only get ``close.''} 

\subsubsection{A Full Internal Subcategory}
The size restriction placed upon us by implementing representable functors as fibres extends to the codomain fibration, $\cat{cod}:\cat{N}/\cat{N} \to \cat{N}$, where $\cat{N}/\cat{N}$ is the ``arrow category'' of $\cat{N}$, with morphisms as objects, and commutative squares as morphisms. Intuitively, $\cat{cod}$ should represent $\cat{N}$-indexed families of NF sets, but we are given pause. The externalization of the internal category of NF-Sets contains families indexed by all sets of NF, rather than restricting to those of cardinality no greater than $\iota``V$. Nevertheless, from $cod(\cat{N})$, we can generate an internal full subcategory that is precisely the internal category of NF sets. The full proof of this result, and its relevance to the category theory of NF is outside of the scope of this paper. In particular, an internal category of NF sets in $\cat{N}$ being full relates to the ``modern'' idea of a Grothendieck Universe as a full internal topos, within a topos. We state our main theorem, and refer the reader to collaborative work of the second and third author, likely to first appear in the former's thesis. $\in_{\cat{N}}$ refers to the stratified set membership relation $\{ \langle \{x\}, y \rangle | x \in y \} \subset TV \times PV = TV \times V$.

\begin{thm}
There is an internal full subcategory of NF Sets, generated by $\pi_{2}' := \Gamma:\in_{\cat{N}} \subset TV \times V \to V$, viewed as an element of the fibre $\cat{cod}^{-1}(V)$ over $V$. The associated exponential in $\cat{N}/ V \times V$ is precisely the set of NF functions, with associated domain and codomain maps: $\langle dom, cod \rangle: Funct \to V \times V$. The generic morphism, $ev: dom^{*}\Gamma \to cod^{*}\Gamma$ is defined by the action: $\langle f, \{x\} \rangle \mapsto \langle f, \{f(x)\} \rangle$.	
\end{thm}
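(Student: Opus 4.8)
The plan is to read $\Gamma$ as a \emph{generic family} in the codomain fibration $\cat{cod}$ and to extract from it the internal category it generates, verifying at each stage that the relevant structure maps are not merely stratifiable but \emph{type-level}, and that the only genuinely non-formal ingredient --- the exponential --- is the fibred incarnation of the pseudo-cartesian-closure relative adjunction already in hand.

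First I would record that $\in_{\cat{N}}=\{\tuple{\{x\},y}:x\in y\}$ is a type-level, hence stratified, set abstract: in $\tuple{\{x\},y}$ the two coordinates must share a type, and since $\{x\}$ sits one type above $x$ while $x\in y$ forces $y$ one type above $x$, they do agree. Thus $\Gamma=\pi_2'$ is a bona fide set map to $V$ and so an object of $\cat{cod}^{-1}(V)$, whose fibre $\Gamma^{-1}(y)=\{\tuple{\{x\},y}:x\in y\}$ is canonically isomorphic to $Ty=\iota``y$. The point that drives everything is that we are \emph{forced} to raise the left coordinate to $\{x\}$ --- the honest membership relation $\{\tuple{x,y}:x\in y\}$ is unstratified --- so the generic family automatically carries $T$-raised fibres, and this is exactly the raising that will make evaluation stratifiable.

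Next I would exhibit the internal category: take $C_0=V$ and $C_1=Funct$, the NF functions each tagged with a chosen codomain, with $\tuple{dom,cod}:Funct\to V\times V$, unit $i:y\mapsto id_y$, and composition $m:C_1\times_{C_0}C_1\to C_1$, $\tuple{g,f}\mapsto g\circ f$. Because Quine pairs are type-level, each of $dom$, $cod$, $i$ and $m$ is given by a type-level set abstract --- this is precisely the remark that type-level pairing is what permits composition of relations --- so $\mathbb{S}\in\cat{cat(N)}$ and the category axioms are the routine diagram chase. Fullness is then immediate: the fibre of $Funct$ over $\tuple{y,z}$ is the set $y\imp z$ of \emph{all} functions $y\to z$, and full faithfulness of $T$ gives $\hom(Ty,Tz)\cong\hom(y,z)$, so the subcategory carries every $\cat{N}$-morphism between the fibres $\Gamma^{-1}(y)\cong Ty$ and $\Gamma^{-1}(z)\cong Tz$, not a proper subcollection.

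The heart of the matter, and the step I expect to be the main obstacle, is that $Funct$ with $\tuple{dom,cod}$ genuinely is the exponential $(cod^*\Gamma)^{dom^*\Gamma}$ in $\cat{N}/(V\times V)$, notwithstanding that $\cat{N}$ is only pseudo-locally-cartesian-closed. The pullbacks $dom^*\Gamma$ and $cod^*\Gamma$ have fibres $Ty$ and $Tz$ over $\tuple{y,z}$, and I would check the universal property fibrewise: a map $W\times_{V\times V}dom^*\Gamma\to cod^*\Gamma$ over $V\times V$ restricts over $\tuple{y,z}$ to a map $Ty\times W_{\tuple{y,z}}\to Tz$, and the relative adjunction $(TA\times-)\dashv_T(A\imp-)$ supplies a genuine bijection $\hom(Ty\times W_{\tuple{y,z}},Tz)\cong\hom(W_{\tuple{y,z}},y\imp z)$. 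The subtlety is that this yields an \emph{honest} exponential and not merely a relative one: the transposition leaves behind no stray $T$ precisely because the fibres of both $dom^*\Gamma$ and $cod^*\Gamma$ are already $T$-raised, so the relative adjunction's target $A\imp B$ is exactly the fibre $y\imp z$ of $Funct$ and the adjunction's missing counit is invisible on hom-sets; this is the fibred form of ``$T$ preserves exponentials''. The residual labour is bookkeeping --- that the fibrewise bijections glue to a bijection natural in $W$ --- which again reduces to $\tuple{y,z}\mapsto(y\imp z)$ being a type-level set map over $V\times V$. The evaluation witnessing the exponential, pulled back to $C_1=Funct$, is the generic morphism $ev:dom^*\Gamma\to cod^*\Gamma$, $\tuple{f,\{x\}}\mapsto\tuple{f,\{f(x)\}}$; one checks directly that ``$\{f(x)\}$ as a function of $f$ and $\{x\}$'' is type-level, being the stratifiable relation $\tuple{x,v}\in f$ with both coordinates singleton-raised --- the same device as the pseudo-evaluation $ev'_{A,B}$. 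The remaining internal-category and definability conditions follow formally, and for their complete verification we refer to the collaborative work cited above.
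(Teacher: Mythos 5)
There is nothing in the paper to compare your argument against: this theorem is stated without proof, the authors explicitly deferring the full argument to the collaborative work of the second and third authors. Judged on its own terms, your proposal follows the standard recipe for generating a full internal subcategory from a generic family --- take $C_{0}=V$, take $C_{1}$ to be the exponential of the two pullbacks of $\Gamma$ in $\cat{N}/(V\times V)$, and read fullness off from the universal property --- and it correctly isolates the one genuinely NF-specific point: because stratification forces the left coordinate of $\in_{\cat{N}}$ to be a singleton, the fibres of $dom^{*}\Gamma$ and $cod^{*}\Gamma$ are $Ty$ and $Tz$, so the relative adjunction $\hom(TA\times C,TB)\cong\hom(C,A\imp B)$ of Section 3 --- an honest natural bijection --- exhibits $y\imp z$ as a genuine exponential, and the fibrewise transposes assemble because $\lambda c.\lambda a.\bigl(\bigcup f(\{a\},c)\bigr)$ is a single stratified formula uniform in $\tuple{y,z}$. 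This is consistent with everything the paper does prove ($T$ full and faithful, pseudo-cartesian closure, the fibred $\tilde{\Pi}_{f}$), and I see no obstruction to completing the details along your lines.

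Two small corrections. The assignment $\tuple{y,z}\mapsto(y\imp z)$ is not type-level --- it raises type --- and it need not be a set map at all; what carries the gluing is that $f\mapsto\tuple{dom(f),cod(f)}$ is type-level on the tagged functions and that the transposition formula is uniformly stratified. And in $(TA\times-)\dashv_{T}(A\imp-)$ it is the counit $ev'$ that exists, the unit being available only in the qualified form of the companion adjunction $(A\times-)_{\;T}\dashv(A\imp-)$; your aside about the ``missing counit'' has this the wrong way round, though nothing downstream depends on it. You are right --- and the theorem's phrase ``the set of NF functions'' glosses over this --- that each element of $Funct$ must carry a chosen codomain, since a graph alone does not determine $cod$.
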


\section{NF As A Category of Classes}
The category of NF Sets has the structure of a category of classes,
in the sense of \cite{awodey, joyal}. This motivates the question: does the set folklore definition of smallness, strongly cantorian, hold up in category theory. Following Joyal and Moerdijk, we consider fibrewise smallness: \textit{strongly cantorian maps}. This requires us to extend NF to the theory NF + SCU. In this section we prove some basic results about $\cat{N}$ as a category of classes, and examine some interesting properties of NF + SCU.

\subsection{Class Categories and NF}
For a proper introduction to Algebraic Set Theory, the reader is referred to works of Joyal \& Moerdijk, and
Awodey.\cite{awodey, joyal}  The idea
of class categories is to form an internal ``algebraic" model of a set theory, using a universal object and defining a system of small maps. A natural example is the ideal completion of a topos. The universe in a class category can exist in varying strengths - allowing the category of ``sets'' to be defined as a full subcategory or, as is true in the NF case, an internal category. 

A \textit{Class Category} $\cat{C}$ is a category containing the
following four properties:

(i) A Heyting category $\cat{C}$ of classes.

(ii) A subcategory $\cat{S} \subset \cat{C}$ of small objects (``sets'')

(iii) A powerclass functor $P:\cat{C} \to \cat{C}$ (``restricted''
powerobjects)

(iv) A universe $V$ with, at least, a monomorphism $PV \to V$.

\begin{mydef}

  A \textbf{category of classes} $\cat{C}$ is a category with the following
  conditions:

  (i) $\cat{C}$ has finite limits and finite coproducts

  (ii) $\cat{C}$ has kernel quotients, and regular epis are stable under pullback.

  (iii) $\cat{C}$ has dual images, that is, for every arrow $f:C \to D$, the
  pullback functor $f^*:Sub(D) \to Sub(C)$ has a right adjoint, $f_*:Sub(D) \to
  Sub(C)$.

\end{mydef}

Conditions i and ii imply that $f^*$ also has a left adjoint $f_!$,
with $f_! \dashv f^* \dashv f_*$ satisfying the Beck-Chevalley conditions. This permits one to model first order logic with equality in $\cat{N}$. 

\begin{thm}
  $\cat{N}$, the category of NF Sets, is a category of classes.
\end{thm}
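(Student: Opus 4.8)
The plan is to verify, one at a time, the three defining conditions of a category of classes, leaning on structural facts about $\cat{N}$ already in hand. Condition (i) is immediate: since the axiom of infinity is a theorem of NF (Specker), $\cat{N}$ is in particular a model of KF + Inf, so by the theorem that $\cat{K}$ is finitely complete and has finite coproducts in the presence of infinity, $\cat{N}$ has finite limits and finite coproducts. Condition (ii) is likewise already available: we have shown (as a corollary to the lemma that NF has coequalisers of kernel pairs) that $\cat{N}$ is a regular category, and regularity is exactly the statement that coequalisers of kernel pairs (kernel quotients) exist and that regular epimorphisms are stable under pullback.

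All the genuine work is in condition (iii), the existence of dual images. First observe that for a set $C$ the subobject poset $\mathsf{Sub}(C)$ is simply the powerset $\sc{C}$, which is a Boolean algebra, and that $f^{*}\colon\mathsf{Sub}(D)\to\mathsf{Sub}(C)$ is the preimage operation $T\mapsto f^{-1}``T$. I construct its right adjoint $f_{*}\colon\mathsf{Sub}(C)\to\mathsf{Sub}(D)$, the universal quantifier along $f$, by
$$ f_{*}(S) := \{\, d\in D : (\forall c)(f(c)=d \to c\in S)\,\}. $$
The decisive point is that this abstract is \emph{stratifiable}. With type-level (Quine) pairs, $f$ relates $c$ and $d$ at a common type, while $S\subseteq C$ and $D$ each sit one type above their elements; assigning $c$ and $d$ type $0$ and $f$, $S$, and the resulting subset of $D$ type $1$ yields a consistent stratification, so $f_{*}(S)$ is a bona fide set of $\cat{N}$ of the same type as a subobject of $D$.

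The adjunction $f^{*}\dashv f_{*}$ then reduces to the elementary biconditional $f^{-1}``T\subseteq S \iff T\subseteq f_{*}(S)$, both sides unwinding to the assertion that every $c$ with $f(c)\in T$ lies in $S$; this I would check directly. The left adjoint $f_{!}=\exists_{f}$, the image $S\mapsto f``S$, is furnished by the image factorisation from condition (ii), giving the full chain $f_{!}\dashv f^{*}\dashv f_{*}$, and the Beck–Chevalley conditions follow by comparing the explicit set-theoretic descriptions of image, preimage, and universal quantifier across a pullback square. Together with the Boolean structure on each $\mathsf{Sub}(C)$, this also supplies the internal first-order logic promised in the surrounding text.

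The main obstacle is precisely the stratification bookkeeping in condition (iii): one must confirm that quantifying universally along $f$ does not shift types in a way that defeats comprehension, the same genre of typing constraint that obstructs true exponentials in $\cat{N}$. Here it does not — the universal quantifier is level-preserving — and once stratifiability is secured the remaining adjointness and Beck–Chevalley verifications are the routine classical arguments.
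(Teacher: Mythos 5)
Your proposal is correct and follows the same route as the paper: conditions (i) and (ii) are discharged by citing the previously established finite (co)completeness and regularity of $\cat{N}$, and condition (iii) rests on observing that the set abstract defining $\forall_f$, namely $\{d \mid (\forall c)(f(c)=d \to c \in S)\}$, is stratifiable. You supply more detail than the paper (the explicit type assignment, the adjunction biconditional, and Beck--Chevalley), but the essential idea is identical.
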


\begin{proof}
  Conditions i \& ii have already been proven; iii simply requires observing that for any $f: A \to B$, the primitive notation defining $\forall_f$ is clearly stratified: $(T \in Sub(A)) \mapsto \{b| \forall a. f(a)=b \Rightarrow a \in T\}$.
\end{proof}

Notice, viewed as a partial order (i.e. its subobject language), $\cat{N}$ has full (constructive) quantification. Typing issues arise when one moves to the type theoretic quantification (dependent sums and products) considered in section 3.

\begin{mydef}
  Let $\cat{C}$ be a category of classes, we define a \textbf{system of
    small maps on $\cat{C}$} as a collection of arrows $\cat{S}$
  satisfying the following:

  (i) $\cat{S} \subset \cat{C}$ is a subcategory of $\cat{C}$, with
  $Ob(\cat{S}) = Ob(\cat{C})$

  (ii) The pullback of a small map along any map is small.
  
  (iii) Diagonals $\Delta: C \to C\times C$ are small

  (iv) If $f \circ e$ is small and $e$ is a regular epimorphism, then $f$ is
  small.

  (v) Copairs of small maps are small.

\end{mydef}

\begin{myax}[SCU]
  The sumset of a strongly cantorian set of strongly cantorian sets is strongly
  cantorian.
\end{myax}

\begin{thm}
  In NF + SCU, the strongly cantorian
  maps (those with strongly cantorian fibres), labeled $\cat{SC}$, form a system
  of small maps.
\end{thm}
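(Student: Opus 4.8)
The plan is to verify the five clauses of the definition of a system of small maps in turn, reading an arrow $f:A\to B$ as \emph{small} exactly when each fibre $f^{-1}(\{b\})$ is strongly cantorian. Before doing so I would collect the closure properties of strongly cantorian sets that the argument leans on, all of which are established earlier or are standard folklore: strong cantorian-ness is invariant under bijection and is inherited by subsets; it is preserved by finite products, by power sets, and by the $T$-functor (a strongly cantorian set is the same size as its set of singletons); and, crucially, the collection of fibres of any set function is the same size as the set of singletons of its image, as exploited in the proof that $\cat{N}$ has coequalisers of kernel pairs. From these one deduces the one genuinely load-bearing auxiliary lemma: a surjective image of a strongly cantorian set is strongly cantorian. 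This holds because the set of fibres of a surjection $q:S\twoheadrightarrow Y$ is a subset of $\mathcal{P}S$, hence strongly cantorian when $S$ is, while being the same size as $\iota``Y$; so $\iota``Y$ is strongly cantorian, and applying $\bigcup$ coordinatewise to the singleton graph of $\iota``Y$ recovers $\iota\restriction Y$ as a set.

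With these in hand, clauses (ii), (iii) and (v) are routine and need nothing beyond the ambient structure of $\cat{N}$. For (ii), the fibre of a pullback $\pi:A\times_B C\to C$ over $c$ is the bijective copy $\langle a,c\rangle\mapsto a$ of the fibre $f^{-1}(\{g(c)\})$, so strong cantorian-ness of fibres transports along $g$. For (iii) the fibres of a diagonal $\Delta:C\to C\times C$ are singletons or empty, hence finite and strongly cantorian. For (v) the fibre of a copair $[f,g]:A+B\to C$ over $c$ is the disjoint union $f^{-1}(\{c\})\sqcup g^{-1}(\{c\})$ of two strongly cantorian sets, and a disjoint union of two strongly cantorian sets is strongly cantorian (glue the two singleton graphs). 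Clause (iv) is where the surjective-image lemma earns its keep: if $e$ is a regular epimorphism — equivalently a surjection, since every epi in $\cat{N}$ is regular — and $f\circ e$ is small, then $e$ restricts to a surjection from the strongly cantorian set $(f\circ e)^{-1}(\{z\})$ onto $f^{-1}(\{z\})$, so the latter is strongly cantorian by the lemma.

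The heart of the theorem, and the sole place SCU is needed, is clause (i): that $\cat{SC}$ is a subcategory on all objects. Identities are small because their fibres are singletons. For closure under composition, given small $f:A\to B$ and $g:B\to C$, the fibre of $g\circ f$ over $c$ is $\bigcup\{f^{-1}(\{b\}):b\in g^{-1}(\{c\})\}$, a union of fibres of $f$ indexed by the strongly cantorian set $g^{-1}(\{c\})$. I would first argue that the indexing \emph{set of fibres} $\{f^{-1}(\{b\}):b\in g^{-1}(\{c\})\}$ is itself strongly cantorian: it is the surjective image of $g^{-1}(\{c\})$ under the setlike map $b\mapsto f^{-1}(\{b\})$ (equivalently, the same size as $\iota``$ of a subset of the strongly cantorian set $g^{-1}(\{c\})$), hence strongly cantorian by the auxiliary lemma. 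Each member of this set is strongly cantorian by hypothesis, so SCU applies verbatim: the sumset $\bigcup\{f^{-1}(\{b\}):b\in g^{-1}(\{c\})\}=(g\circ f)^{-1}(\{c\})$ is strongly cantorian.

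The main obstacle is precisely this composition step, which is the reason SCU is introduced at all: without a principle guaranteeing that a strongly cantorian union of strongly cantorian sets remains strongly cantorian, the strongly cantorian maps would fail to be closed under composition and so would not form a category. The supporting technical point to get right is the identification of the set of fibres as a strongly cantorian set — that is, ensuring the index of the union fed to SCU is itself small — together with a careful check that the defining abstracts for the fibre-indexing map and for the coordinatewise $\bigcup$ operation are genuinely stratified, so that no hidden typing violation is smuggled in.
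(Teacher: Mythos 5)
Your proposal is correct and takes essentially the same route as the paper's own proof: verify the five clauses, using closure of strong cantorian-ness under subsets/bijections for pullbacks and diagonals, surjectivity of $e$ for clause (iv), and SCU precisely where composition of small maps must be shown small. You supply substantially more detail than the paper (which declares (i) ``trivial'', (iii) and (v) ``straightforward'', and handles (iv) by analogy), in particular the surjective-image lemma and the verification that the set of fibres indexing the union fed to SCU is itself strongly cantorian -- both of which are exactly the points the paper leaves implicit.
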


\begin{proof}

  In the presence of SCU, it is trivial that $\cat{SC}$ forms a subcategory. For any pullback $g^{*}(f)$, with $f \in \cat{SC}$, and any fibre over $\pi_1^{*}(a')$, $a' \in cod(g)$, there is an injection of this  fibre into the fibre of $f$ over the image $g(a') \in cod(f)$. As the latter is strongly cantorian and $|f'^{-1}(a')| \prec |f^{-1}(b)|$, the fibre over an arbitrary $a'$ is strongly cantorian, so $g^{*}(f) \in \cat{SC}$. iii \& v are straightforward, so all that remains is iv. Following the same line of reasoning, one can show that, as
  $e$ is surjective, if the fibre of $f \circ e$ over any element is a strongly cantorian set, then the fibre of $f$ over that element must be strongly cantorian.
 
\end{proof}

\begin{cor}[Descent Property for $\cat{SC}$]

  If $f$ is strongly cantorian and $e$ a regular epi, fitting
  into a pullback diagram below, then $g$ is strongly cantorian. $$\xymatrix{D \ar[r] \ar[d]_f & B \ar[d]^g \\
    C \ar[r]_e & A}$$

\end{cor}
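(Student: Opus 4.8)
The plan is to argue fibrewise, exploiting the fact that in a pullback square the fibres of the pulled-back map reproduce the fibres of the original map over the image of the base change. First I would observe that since the square is a pullback, $D$ is (isomorphic to) $C\times_A B=\{\tuple{c,b}\mid e(c)=g(b)\}$, with $f$ the left projection. Hence for each $c\in C$ the fibre $f^{-1}(c)$ is carried by the set map $\tuple{c,b}\mapsto b$ bijectively onto $\{b\in B\mid g(b)=e(c)\}=g^{-1}(e(c))$, the fibre of $g$ over $e(c)$. This bijection is a genuine NF set (a restriction of the second projection to a subset of a product), which is the feature that makes the transfer of smallness legitimate.

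Next I would record the auxiliary fact that being strongly cantorian is invariant under set bijections. Given a set bijection $h:X\to Y$ with $X$ strongly cantorian, the composite $Th\circ(\iota\restric X)\circ h^{-1}$ sends each $w\in Y$ to $\{w\}$: one applies $h^{-1}$, then $\iota\restric X$, then $Th$, so that $w\mapsto z\mapsto\{z\}\mapsto\{h(z)\}=\{w\}$. This exhibits $\iota\restric Y$ as a composite of the sets $h^{-1}$, $\iota\restric X$ (a set by hypothesis), and $Th$ (a set because the $T$-functor is defined on every NF function); hence its graph is a set and $Y$ is strongly cantorian. The only thing to verify here is that each constituent is stratified, which it is by the definition of $T$.

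With these two observations the corollary is immediate. Since $\cat{N}$ is regular and all its epis are regular (the preceding corollary), $e$ is a surjection, so every $a\in A$ equals $e(c)$ for some $c\in C$. Then $g^{-1}(a)=g^{-1}(e(c))$ is set-bijective to $f^{-1}(c)$, which is strongly cantorian because $f\in\cat{SC}$; by the invariance lemma $g^{-1}(a)$ is strongly cantorian. As $a$ was arbitrary, every fibre of $g$ is strongly cantorian, i.e.\ $g\in\cat{SC}$.

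I expect the only genuine obstacle to be the bookkeeping around the invariance lemma --- making sure the fibre isomorphism is a bona fide NF set and that the type assignments in $Th\circ(\iota\restric X)\circ h^{-1}$ cohere --- rather than anything conceptually deep. Note in particular that, unlike the closure-under-composition half of the preceding theorem, this direction does not appear to require SCU, since it only moves strong cantorian-ness across individual fibres rather than amalgamating a strongly cantorian family of strongly cantorian sets.
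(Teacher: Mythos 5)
Your proof is correct and matches the argument the paper intends: the paper leaves this corollary essentially unproved, deriving it from the fibrewise reasoning in the preceding theorem (``as $e$ is surjective, \dots the fibre of $f$ over that element must be strongly cantorian''), and your version --- the set bijection $f^{-1}(c)\cong g^{-1}(e(c))$ given by the second projection, plus invariance of strong cantorian-ness under set bijections via $Th\circ(\iota\restric X)\circ h^{-1}$ --- is exactly that argument made explicit. Your closing observation that this direction needs only surjectivity of $e$ and not SCU is also accurate.
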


The third aspect of a Class Category is that it contain \emph{powerclasses}.

\begin{mydef}
  For every object $C$ in a class category $\cat{C}$, a \textbf{powerclass} is
  an object \textbf{P}C with a small relation $\epsilon_C \to C\times
  \textbf{P}C$ such that, for any $X$ and any small relation $R \to C\times X$,
  there is a unique arrow $\rho$ satisfying the pullback diagram below:
$$ \xymatrix{ R \ar[r] \ar[d] & \epsilon_C \ar[d] \\
  C\times X \ar[r]_{1_C \times \rho} & C\times \textbf{P}C}$$ In addition,
$\textbf{P}C$ satisfies the condition that the internal subset relation
$\subset_C \to \textbf{P}C \times \textbf{P}C$ is small.

\end{mydef}

This might not seem problematic in $\cat{N}$, but result would be the Burali-Forti paradox.

\begin{prop}
	NF + SCU, with the small maps defined as those with strongly cantorian fibres, cannot form a powerclass functor.
\end{prop}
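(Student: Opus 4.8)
The plan is to show that a powerclass functor cannot even be defined on the universe object $V$, because the object $\mathbf{P}V$ it would produce is forced to be a set whose existence contradicts NF. First I would pin down what $\mathbf{P}V$ must be. Taking $X=1$ in the defining universal property, the global points $1\to\mathbf{P}V$ classify exactly the small subobjects $R\rightarrowtail V$; since ``small'' here means strongly cantorian fibres, and a subobject of $V$ over $1$ is small precisely when it is a strongly cantorian set, these points correspond to the strongly cantorian subsets of $V$. As every set is a subset of $V$, they are simply \emph{all} strongly cantorian sets. Hence a powerclass functor requires the collection $SC$ of all strongly cantorian sets to be an object of $\cat{N}$, that is, a set. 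The whole proposition therefore reduces to showing that $SC$ is a proper class in NF${}+{}$SCU.

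To derive a contradiction from the sethood of $SC$, I would reconstruct the Burali-Forti argument alluded to on page \pageref{bfort}. Since $NO$ is a set, $\Delta := SC\cap NO$ --- the set of strongly cantorian ordinals --- is a set. It is an initial segment of $(NO,<)$, since any predecessor of a strongly cantorian ordinal is again strongly cantorian, and it is a \emph{proper} initial segment, because $\Omega=\mathrm{ot}(NO,<)$ satisfies $T\Omega<\Omega$, so $\Omega$ is not even cantorian and hence $\Omega\notin\Delta$. Writing $\delta$ for the least ordinal not in $\Delta$, we have $\Delta=\{\alpha:\alpha<\delta\}$. A short case analysis shows $\delta$ is a limit: were $\delta=\gamma+1$ with $\gamma$ strongly cantorian, then $\delta$ --- obtained by adjoining a single (strongly cantorian) top point --- would be strongly cantorian too, contradicting $\delta\notin\Delta$. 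For limit $\delta$, continuity of $T$ gives $T\delta=\sup_{\alpha<\delta}T\alpha=\sup_{\alpha<\delta}\alpha=\delta$, so $\delta$ is cantorian and $T^2\delta=\delta$.

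The final step closes the loop, and it is here that SCU is indispensable. Recall that the order type of an initial segment carries a double type shift, so $\mathrm{ot}(\Delta)=T^2\delta=\delta$. If $\Delta$ is strongly cantorian, then it is a strongly cantorian well-ordered set of type $\delta$, whence $\delta$ is a strongly cantorian ordinal; but then $\delta\in\Delta=\{\alpha:\alpha<\delta\}$, i.e. $\delta<\delta$ --- the Burali-Forti contradiction. Thus everything comes down to proving $\Delta$ strongly cantorian, and this is exactly what SCU is meant to supply: I would exhibit $\Delta$ as the sumset $\bigcup F$ of the family $F=\{\{\beta:\beta<\alpha\}:\alpha<\delta\}$ of its proper initial segments, each of which is strongly cantorian, and apply SCU.

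The main obstacle is this last application of SCU. SCU requires not merely that the members of $F$ be strongly cantorian but that $F$ \emph{itself} be a strongly cantorian set, and in NF one cannot in general conclude that a set all of whose members are strongly cantorian is itself strongly cantorian (witness $\iota``V$). The real work is the bookkeeping needed to see that $F$ is strongly cantorian --- equivalently, that the type shift relating $\alpha\mapsto\{\beta:\beta<\alpha\}$ is neutralized on the cantorian ordinals below $\delta$ --- so that SCU legitimately delivers that $\bigcup F=\Delta$ is strongly cantorian. Once this is in place the contradiction, and with it the impossibility of the powerclass functor, follows at once.
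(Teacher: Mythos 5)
Your overall strategy is the paper's: force $\mathbf{P}V$ to be (essentially) the set of all strongly cantorian sets, intersect with the ordinals/wellorderings, and run Burali--Forti on the resulting initial segment $\Delta$ of strongly cantorian ordinals. The paper likewise reduces everything to the claim that the wellorder on the strongly cantorian ordinals is itself a strongly cantorian wellorder whose type exceeds every member, using the lemma (proved by transfinite induction for strongly cantorian $\alpha$) that such an $\alpha$ equals the order type of its predecessors. So the skeleton is right, and your closing step (if $\Delta$ is strongly cantorian then $\mathrm{ot}(\Delta)=\delta\in\Delta$, contradiction) matches the paper's.

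The genuine gap is exactly the one you flag, and your proposed repair cannot work as stated. You want to get ``$\Delta$ is strongly cantorian'' by applying SCU to $F=\{\{\beta:\beta<\alpha\}:\alpha<\delta\}$, but SCU needs $F$ itself to be strongly cantorian, and this is \emph{provably equivalent} to what you are trying to establish: the graph $\{\tuple{\{\alpha\},\mathrm{seg}(\alpha)}:\alpha\in\Delta\}$ is a stratified set abstract, so $F$ is in set bijection with $\iota``\Delta$, and $\mathrm{stcan}(\iota``\Delta)\leftrightarrow\mathrm{stcan}(\Delta)$. The ``bookkeeping'' you defer is therefore a circle, not a calculation. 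A second, smaller problem: your continuity step $T\delta=\sup_{\alpha<\delta}T\alpha$ is not available in NF, since $\alpha\mapsto T\alpha$ is not a set function and $\{T\alpha:\alpha<\delta\}$ is an unstratified abstract; fortunately this step is dispensable, because once $\Delta$ is known to be strongly cantorian the fixed-point identities $T^3\delta=T^2\delta$, hence $T\delta=\delta$, fall out of $\mathrm{ot}(\Delta)=T^2\delta$ and injectivity of $T$ on ordinals. What you need instead of the SCU detour is the paper's route: work with the set of strongly cantorian \emph{wellorderings} $\mathbf{P}V\cap WO$ directly (noting closure under products and hence under forming the order on $\Delta$, and that the resulting wellorder lives inside the strongly cantorian world you have assumed to be a set), together with the transfinite-induction lemma that a strongly cantorian ordinal is the order type of the ordinals below it. SCU is part of the theory being refuted here, not a tool the contradiction depends on.
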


\begin{proof}
Consider the intersection of the set of strongly cantorian sets, $P_{S}(V)$, with the set of all wellorderings. Such a set would be closed
under the formation of binary products, and therefore under the formation of
orderings. For a wellordering relation $\leq$, the induced
relation on singletons $\leq^\iota$ has the same order type. While the following is not true generally in NF, for a \emph{strongly cantorian} ordinal, it can
be shown by transfinite induction that every ordinal number (i.e. equivalence class of wellorderings under order isomorphism) is the order type of all the ordinals beneath it. But then the collection of ordinals formed from the strongly cantorian wellorders will itself be a strongly cantorian wellorder, longer than any in the set of strongly cantorian wellorderings - precisely the contradiction of Burali-Forti.
\end{proof}\label{bfort}

\subsection{SCU as an Axiom}

\begin{thm}
	SCU is a theorem of NFU + Choice
\end{thm}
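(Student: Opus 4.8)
The plan is to pass from sets to cardinals, reduce \textbf{SCU} to a closure property of the strongly cantorian cardinals under suprema, and then use Choice (together with Specker's $T$-operation) to control that supremum.

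First I would record that, over NFU, being strongly cantorian is an invariant of cardinality. If $A$ is strongly cantorian and $g\colon A\to B$ is a set bijection, then the relative composite $Tg\circ(\iota\restriction A)\circ g^{-1}$ is a set --- relative composition of set relations is always a set, its defining formula being stratified in the relations taken as \emph{parameters} --- and it sends each $b\in B$ to $\{b\}$, so it is exactly $\iota\restriction B$ and $B$ is strongly cantorian. Hence the strongly cantorian cardinals form a $\le$-downward-closed class, and, since under Choice the cardinals are well-ordered, an initial segment $\mathfrak{I}$; by the topos-of-strongly-cantorian-sets result they are closed under finite sums, finite products and exponentiation. Writing $\lambda:=|X|$ and $\kappa_x:=|x|$ (all in $\mathfrak{I}$), Choice gives $|\bigcup X|\le\sum_{x\in X}\kappa_x\le\lambda\cdot\sup_{x\in X}\kappa_x$, so by downward closure and closure under binary products it suffices to show $\mu:=\sup_{x\in X}\kappa_x\in\mathfrak{I}$. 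When the supremum is attained this is immediate, so the whole content reduces to the genuine-limit case, i.e. to showing that the least non-strongly-cantorian cardinal is \emph{regular}.

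For that case I would pass to ordinals: by Choice well-order a set of size $\mu$, reducing to the claim that the corresponding initial segment of $NO$ is strongly cantorian, with the hypotheses now a strongly-cantorian-indexed family of strongly cantorian ordinals cofinal in some $\beta$. Invoking Specker's $T$ on $NO$ --- which is order-preserving and continuous, with the strongly cantorian ordinals among its fixed points --- gives $T\beta=T(\sup)=\sup T(\cdot)=\beta$, so $\beta$ survives as a fixed point (is \emph{cantorian}). The delicate step, and the main obstacle, is to upgrade this fixed-point information to an actual \emph{set} witness $\iota\restriction\{\gamma:\gamma<\beta\}$, i.e. to genuine strong cantorian-ness: one cannot simply glue the witnesses $\iota\restriction\{\gamma:\gamma<\kappa_x\}$ of the pieces, since ``$p=\iota\restriction z$'' is unstratified and no stratified comprehension forms the family of these type-crossing graphs (this is exactly why a finite composite of the given witnesses only handles finite unions). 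The role of Choice is precisely to bypass this barrier: because $V$ is a set it is well-orderable, and --- exactly as Choice itself posits type-crossing choice-function sets --- I would use this global well-ordering, together with the $T$-continuity above, to assemble $\iota\restriction\{\gamma:\gamma<\beta\}$ as a single AC-given set rather than by stratified recursion. Transferring back along the well-ordering bijection, via the cardinality-invariance of the first step, then yields that $\bigcup X$ is strongly cantorian; and when $\mathfrak{I}$ contains only the finite cardinals the whole statement collapses to the triviality that a finite union of finite sets is finite, so the argument is uniform across models of NFU${}+{}$Choice.
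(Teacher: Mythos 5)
Your reduction to cardinals is fine as far as it goes: cardinality-invariance of strong cantorian-ness via relative composition is correct, as is the passage to $|\bigcup X|\le\lambda\cdot\sup_{x}\kappa_{x}$. But the proof stalls exactly where you say it does, and the resolution you offer is not an argument. Having reduced everything to showing that the initial segment $\{\gamma:\gamma<\beta\}$ of $NO$ is strongly cantorian, you first establish the wrong thing ($T\beta=\beta$, i.e.\ that $\beta$ is \emph{cantorian}, which as you yourself note is strictly weaker), and then claim that a global well-ordering of $V$ will ``assemble'' the witness $\iota\restriction\{\gamma:\gamma<\beta\}$ ``as a single AC-given set.'' Choice gives you choice functions on sets of nonempty sets; it does not let you posit an arbitrary type-crossing graph into existence, and no construction is indicated. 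This is a genuine gap, and it sits on the one nontrivial point of the theorem.

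The missing ingredient is the standard NF(U) lemma that \emph{every set of cantorian ordinals is strongly cantorian}: the graph $\{\tuple{\{\alpha\},T\alpha}:\alpha\in NO\}$ is given by a stratified set abstract, hence is a set, and on a set $S$ of cantorian ordinals its converse is exactly $\iota\restriction S$ --- no Choice is needed for this step. With that lemma your limit case closes immediately, and without any appeal to regularity or to continuity of $T$: every $\gamma<\beta$ lies below some $\alpha_{i}$, hence is the order type of a subset of a strongly cantorian wellordered set, hence is cantorian; so $\{\gamma:\gamma<\beta\}$ is a set of cantorian ordinals and therefore strongly cantorian. This is also, in essence, what the paper's proof does, though more directly: it uses AC to give every element of $\bigcup X$ an address which is an ordered pair of cantorian ordinals, and then invokes the same lemma (together with closure of the strongly cantorian sets under products and subsets) to conclude that the set of addresses, and hence $\bigcup X$, is strongly cantorian. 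You should either import that lemma explicitly or switch to the paper's addressing argument; as written, the central step is asserted rather than proved.
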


\begin{proof}
Let $X$ be a
strongly cantorian set of strongly cantorian sets.  AC implies that every
strongly cantorian set is the same size as an initial segment of the ordinals
(and all the ordinals in that inital segment will be cantorian).  Use AC to pick
one such bijection for each $x \in X$ and fix such a bijection for $X$ itself.
Thus everything in $\bigcup X$ has an address that is an ordered pair of
cantorian ordinals, so $\bigcup X$ now injects into a set of ordered pairs of
cantorian ordinals.  Any such set is strongly cantorian, so $\bigcup X$ must be
strongly cantorian too. 

\end{proof}

SCU doesn't appear to be a theorem of NF, but nor does
it appear to be strong. One might hope to prove its relative consistency by means of Rieger-Bernays permutation
models, but it turns out that SCU is invariant.\footnote{We omit the proof of the following lemma, as it is intended more for advanced students of NF and finds no further need in this paper. The relevant background on Rieger-Bernays can be found in \cite{forster2}.}
\begin{lem}
  SCU is invariant 
\end{lem}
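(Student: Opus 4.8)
The plan is to work with the Rieger--Bernays apparatus referenced in the footnote. Fix a setlike permutation $\pi$ of $V$ and form the permutation model $V^{\pi}$ whose membership relation is $x \in_{\pi} y \iff x \in \pi(y)$; for a formula $\phi$ write $\phi^{\pi}$ for its $\in_{\pi}$-translation, so that $V^{\pi} \models \phi$ iff $V \models \phi^{\pi}$. Invariance of SCU is the assertion that $\mathrm{SCU}^{\pi} \leftrightarrow \mathrm{SCU}$ is a theorem of NF for every such $\pi$. The one tool I would lean on is the fundamental theorem of this theory (see \cite{forster2}): every \emph{stratified} sentence is invariant. SCU is not literally stratified --- the predicate ``strongly cantorian'' is spelled out using the singleton function, and ``$f(x)=\{x\}$'' cannot be stratified --- so the theorem does not apply directly, and the real work is to reduce SCU to stratified data by hand.

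The core of the argument is to show that the two ingredients of SCU transform controllably under $\pi$. First I would compute the translated operations: the $V^{\pi}$-singleton of $x$ is $\pi^{-1}(\{x\})$, and the $V^{\pi}$-sumset of $X$ is $\pi^{-1}\left(\bigcup \pi``(\pi(X))\right)$; both are built from $\iota$, $\pi$, $\pi^{-1}$ and their jumps $j^{n}(\pi)$, all of which are setlike. The key claim is then that \emph{``strongly cantorian'' is itself invariant}: a set $X$ is strongly cantorian in $V^{\pi}$ --- i.e. the $V^{\pi}$-graph of $x \mapsto \pi^{-1}(\{x\})$ restricted to $X$ is a $V^{\pi}$-set --- exactly when the genuine graph of $\iota \restric X$ is a set in $V$. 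I would prove this by exhibiting each witnessing graph as the image of the other under a setlike reshuffling by $\pi$ and $j(\pi)$, so that one exists as a set precisely when the other does.

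With ``strongly cantorian'' shown to be invariant, I would assemble the full statement. In $V^{\pi}$ the hypothesis of SCU is that $X$ is a strongly cantorian set of strongly cantorian sets, which by the previous paragraph is the same hypothesis as in $V$; and the conclusion, that $\bigcup^{\pi} X$ is strongly cantorian in $V^{\pi}$, unwinds via the computed form of $\bigcup^{\pi}$ and the invariance of strong cantorianness to the statement that the genuine $\bigcup X$ is strongly cantorian in $V$. Hence $\mathrm{SCU}^{\pi}$ and $\mathrm{SCU}$ have the same truth value. The main obstacle --- and the reason the paper relegates the proof to a footnote --- is precisely the middle step: verifying rigorously that conjugating the singleton graph by the permutation and its jump preserves sethood. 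This is the one place where non-stratified bookkeeping enters, and one must check that the relevant $j^{n}(\pi)$ are available as sets, so that the conversion between a $V$-witness and a $V^{\pi}$-witness is itself effected by a set.
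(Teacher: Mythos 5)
The paper deliberately omits its own proof of this lemma (see the footnote preceding it), so there is nothing to compare against; judged on its own terms, your strategy --- set up the Rieger--Bernays translation, let the fundamental theorem handle the stratified parts, and treat the unstratified predicate ``strongly cantorian'' by hand --- is surely the intended route. But your key claim is wrong as stated, and the error propagates into the final assembly. With the convention $x\in_{\pi}y \iff x\in\pi(y)$, the $\in_{\pi}$-extension of $X$ is the $V$-set $\pi(X)$, not $X$; so when you restrict the $V^{\pi}$-singleton map to ``the members of $X$'' you are in fact restricting it to the members of $\pi(X)$. Carrying out the conjugation you describe (by $\pi$ and $j(\pi)$, which are set functions provided $\pi$ is a \emph{set} permutation --- not merely setlike, which is the hypothesis you should be imposing) yields: $X$ is strongly cantorian in $V^{\pi}$ iff $\pi(X)$ is strongly cantorian in $V$. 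These are genuinely different conditions: the transposition $\pi=(\emptyset\ V)$ makes $\emptyset$ into the universe of $V^{\pi}$, hence not strongly cantorian there, although $\emptyset$ is strongly cantorian in $V$. So ``strongly cantorian'' is not invariant as a predicate of a fixed $X$, and the assertion that the hypothesis of SCU in $V^{\pi}$ ``is the same hypothesis as in $V$'' does not follow.

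The fix is routine but is exactly where the content lies. Unwinding the whole sentence: the hypothesis ``$X$ is a strongly cantorian set of strongly cantorian sets'' relativized to $V^{\pi}$ becomes ``$\pi(X)$ is strongly cantorian and $\pi(y)$ is strongly cantorian for every $y\in\pi(X)$,'' i.e.\ (since $\pi``\pi(X)$ is the image of $\pi(X)$ under a set bijection) that $Y:=\pi``\pi(X)=(j(\pi)\circ\pi)(X)$ is a strongly cantorian set of strongly cantorian sets in $V$; and your own computation of the translated sumset shows the conclusion becomes ``$\bigcup Y$ is strongly cantorian in $V$.'' Thus $\mathrm{SCU}^{\pi}$ is precisely $\mathrm{SCU}$ precomposed with the permutation $j(\pi)\circ\pi$ of $V$, and since $Y$ ranges over all of $V$ as $X$ does, the two sentences are equivalent. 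Your proposal reaches the right conclusion but, as written, identifies $X$ with $\pi(X)$ at the one step where that distinction is the entire point.
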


We start with a banal observation.  Let $F_1$ be the function that sends each
strongly cantorian set $x$ to $\iota \restric x$. $F_1$ cannot be a set: if it
were then $F_1``(\iota``V) = \{\iota \restric \{x\}: x \in V\}$ would be a set
(beco's the image of a set in a set is a set) and $\bigcup F_1``(\iota``V)$
would be the graph of the singleton function, and that cannot be a set.  However
this line of talk leaves open the possibility that $F_1 \restric x$ might be a
set whenever $x$ is strongly cantorian. In fact we have the following.

\begin{myres}
  SCU is equivalent to the assertion that, for all strongly cantorian sets $x$
  of strongly cantorian sets, $F_1\restric x$ is a set.
\end{myres}

\begin{proof}

\smallskip

L $\to$ R

Assume SCU and let $X$ be a strongly cantorian set of strongly cantorian sets.
Then $\iota\restric \bigcup X$ is a set.  Let's call it $F$.  Consider now the
function that sends each $x \in X$ to $F\restric x$.  This is a set, since it is
the extension of a stratifiable set abstract.  But $X$ was an arbitrary strongly
cantorian set of strongly cantorian sets.  So SCU implies that $F_1$ is locally
a set, in the sense that, for any strongly cantorian set $X$ [the graph of] its
restriction to $X$ is a set.

\smallskip

R $\to$ L

Let $X$ be a strongly cantorian set of strongly cantorian sets.  Then $F_1
\restric X = \lambda x \in X.\iota\restric x$ is a set and so too is the image
of $X$ in it, namely $\{\iota\restric x: x \in X\}$.  But then $\bigcup
\{\iota\restric x: x \in X\}$ is a set, and is $\iota\restric \bigcup X$ making
$\bigcup X$ strongly cantorian as desired.

\end{proof}

Consider now
the function $F_2: X \mapsto F_1 \restric X$ for every strongly cantorian set
$X$ of strongly cantorian sets.  Can the graph of $F_2$ be a set?  Clearly not:
$\iota^2``V$ is a set of strongly cantorian sets of strongly cantorian sets, and its
image in this function would be the set $\{\iota\restric\{x\}: \{x\} \in
\iota``V\}$, which is $\{\iota\restric \{x\}: x \in V\}$, whose sumset is simply
the graph of $\iota$.  However, there seems to be no obvious objection to the
existence of [the graph of] the restriction of $F_2$ to any {\sl strongly
  cantorian} set.

Let us write `stcan' for the class of strongly
cantorian sets, `stcan$^2$' for the class of strongly cantorian sets of strongly
cantorian sets.  Let $F_n$ be the function $\lambda x \in$ stcan$^n.F_n\restric x$; $F_n$ cannot exist globally but $F_n \restric X$ can exist for any $X$ in stcan$^{n+1}$.

Let SCU$_n$ be the assertion that restrictions of $F_n$ exist locally, so that $F_n \restric X$ is a set whenever $X \in$ stcan$^n$. SCU$_1$ is of course SCU.

We record for later use the trivial lemma:

\begin{lem} 
	SCU implies that if $x \in$ stcan$^{n+1}$ then $\bigcup x \in$ stcan$^n$.
\end{lem}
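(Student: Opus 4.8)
The plan is to read the definition recursively: for $n\ge 1$, a set lies in $\mathrm{stcan}^{n+1}$ exactly when it is strongly cantorian and each of its members lies in $\mathrm{stcan}^{n}$ (with $\mathrm{stcan}^{1}=\mathrm{stcan}$, and $\mathrm{stcan}^{0}$ taken to be the class of all sets). Fixing $x\in\mathrm{stcan}^{n+1}$, I would then establish two things about $\bigcup x$ --- which exists as a set because sumset is given by the stratified abstract $\{y:\exists z\,(z\in x\wedge y\in z)\}$: first, that $\bigcup x$ is strongly cantorian, and second, that every member of $\bigcup x$ lies in $\mathrm{stcan}^{n-1}$. Together these two facts are precisely the assertion $\bigcup x\in\mathrm{stcan}^{n}$.

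For the first, I would observe that $\mathrm{stcan}^{n+1}\subseteq\mathrm{stcan}^{2}$ for every $n\ge 1$. Indeed $\mathrm{stcan}^{n}\subseteq\mathrm{stcan}$, since anything in $\mathrm{stcan}^{n}$ is in particular a strongly cantorian set; hence each member of $x$ is strongly cantorian, while $x$ itself is strongly cantorian by hypothesis, so $x$ is a strongly cantorian set of strongly cantorian sets. SCU then applies directly to give that $\bigcup x$ is strongly cantorian. This is the only point at which the axiom is used, and for $n=1$ the whole lemma collapses to exactly this single instance of SCU.

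For the second, I would merely unwind the definition of $\mathrm{stcan}^{n}$: any $y\in\bigcup x$ satisfies $y\in z$ for some $z\in x$, and since $z\in\mathrm{stcan}^{n}$ every member of $z$ --- in particular $y$ --- lies in $\mathrm{stcan}^{n-1}$ (this clause is vacuous when $n=1$). Combining the two observations yields $\bigcup x\in\mathrm{stcan}^{n}$, as required. I do not expect a genuine obstacle here: the content of the lemma is entirely the one appeal to SCU that makes $\bigcup x$ strongly cantorian, and the remaining work is bookkeeping of the nested definitions together with the routine check that $\bigcup x$ is a legitimate set. If a cleaner packaging is preferred, the same argument can be phrased as a one-line induction on $n$, with base case $n=1$ (SCU) and inductive step identical to the reasoning above.
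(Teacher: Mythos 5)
Your proof is correct and is exactly the routine unwinding of definitions the paper has in mind: the paper labels this ``the trivial lemma'' and records it without proof. The one genuine step is the single application of SCU to $x$ viewed as a strongly cantorian set of strongly cantorian sets, and the rest (that members of $\bigcup x$ lie in stcan$^{n-1}$ because they are members of members of $x$) is the bookkeeping you describe.
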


\begin{thm} 
	All SCU$_n$ for $n \in \Nn$ are equivalent.
\end{thm}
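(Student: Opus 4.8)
The plan is to establish the two adjacent implications $\text{SCU}_{n+1}\Rightarrow\text{SCU}_n$ and $\text{SCU}_n\Rightarrow\text{SCU}_{n+1}$; the equivalence of the whole family then follows by a trivial induction, every $\text{SCU}_n$ collapsing onto $\text{SCU}_1=\text{SCU}$. Crucially, the downward implications will be proved \emph{unconditionally}, so that from any $\text{SCU}_n$ I may freely extract $\text{SCU}$ before attempting the upward step, keeping the argument non-circular.

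First I would dispatch the downward implication $\text{SCU}_{n+1}\Rightarrow\text{SCU}_n$ by a singleton trick. Given $X\in$ stcan$^{n+1}$, the singleton $\{X\}$ is finite, hence strongly cantorian, and its sole member lies in stcan$^{n+1}$, so $\{X\}\in$ stcan$^{n+2}$. Then $\text{SCU}_{n+1}$ makes $F_{n+1}\restric\{X\}=\{\tuple{X,F_n\restric X}\}$ a set, and $F_n\restric X$ is recovered from it as $\bigcup(\snd``(F_{n+1}\restric\{X\}))$, a legitimate set operation. Iterating these implications downward shows that $\text{SCU}_n$ entails $\text{SCU}_m$ for every $m\le n$, and in particular each $\text{SCU}_n$ entails $\text{SCU}_1=\text{SCU}$.

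The upward implication $\text{SCU}_n\Rightarrow\text{SCU}_{n+1}$ is the substantive one, and it reprises the device of the earlier Result: replacing the non-set global function $F_n$ by a genuine set that agrees with it on the relevant domain. Fix $Y\in$ stcan$^{n+2}$. Since $\text{SCU}_n$ already yields $\text{SCU}$ by the downward chain, the trivial lemma gives $\bigcup Y\in$ stcan$^{n+1}$, whence $\text{SCU}_n$ makes $G:=F_n\restric\bigcup Y$ a set. Every $x\in Y$ lies in stcan$^{n+1}$, so $x\subseteq\mathrm{dom}(F_n)$ and $x\subseteq\bigcup Y$; consequently $F_n\restric x=G\restric x$. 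Therefore $F_{n+1}\restric Y=\{\tuple{x,G\restric x}:x\in Y\}$, and since $G$ is a fixed set parameter, $G\restric x=\{p\in G:\fst(p)\in x\}$, so the whole abstract becomes stratifiable. This is exactly where type-level Quine pairs are essential, keeping $p$, $\fst(p)$, and the recorded tuples at a single level. Hence $F_{n+1}\restric Y$ is a set and $\text{SCU}_{n+1}$ holds.

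The main obstacle is this upward step, and specifically the stratification bookkeeping inside it. One cannot form the image of $Y$ under $F_n$ directly, since $F_n$ is not a set; the entire argument turns on noticing that restricting the fixed set $G$ to each $x$ reproduces $F_n\restric x$, thereby converting an unstratifiable ``apply the class function'' operation into the genuinely stratifiable abstract $\{\tuple{x,\{p\in G:\fst(p)\in x\}}:x\in Y\}$. Checking that this abstract receives a consistent stratification under type-level pairing, and that invoking $\text{SCU}$ (via the downward chain) together with the trivial lemma is precisely what licenses the passage to $G$, is the crux; everything else is routine.
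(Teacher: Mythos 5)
Your proof is correct and follows essentially the same route as the paper: the upward implication is the paper's argument almost verbatim (pass to $\bigcup Y$ via the trivial lemma, obtain $G=F_n\restric\bigcup Y$ from SCU$_n$, and note that $x\mapsto G\restric x$ is a stratifiable abstract with set parameter $G$ that agrees with $F_{n+1}$ on $Y$), and you are in fact more careful than the paper in making explicit that the instance of SCU needed for the trivial lemma is itself supplied by the already-established downward chain. The only divergence is cosmetic: for the downward implication the paper applies SCU$_{n+1}$ to $\iota``X$ and reassembles $F_n\restric X$ as the sumset of the image $\{\{\tuple{y,F_n(y)}\}:y\in X\}$, whereas you apply it to the single finite (hence strongly cantorian) set $\{X\}$ and extract the second component of the one resulting pair; both singleton tricks are sound.
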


\begin{proof}

SCU$_{n+1}$ implies SCU$_n$.

\medskip

Suppose $x \in$ stcan$^n$; we will show that $F_n\restric x$ exists.  Since $x
\in$ stcan$^n$ we have $\iota``x \in$ stcan$^{n+1}$.  So, by SCU$_{n+1}$,
$F_{n+1}\restric \iota``x$ exists. This is the function that, on being given
$\{y\} \in \iota``x$, returns $F_n\restric\{y\}$.  This value is the singleton
$\{\tuple{y, F_n(y)}\}$.  So $F_{n+1}``(\iota``x)$ (which is a set) is
$\{\{\tuple{y,F_n(y)}\}:y\in x\}$, and the sumset of this last object is
precisely $F_n\restric x$, as desired.


\medskip

For the other direction we assume SCU$_n$, and suppose $x$ to be an arbitrary
member of stcan$^{n+1}$; we will show that $F_{n+1}\restric x$ is a set.

Clearly stcan$^n \subseteq$ stcan$^{n+1}$ so $x \in$ stcan$^n$, whence---by
SCU---$\bigcup x \in$ stcan$^n$.  SCU$_n$ now tells us that $F_n \restric
\bigcup x$ is a set. Let's call this function $H$ for the moment.  But then the
function that takes subsets $S$ of $\bigcup x$ and returns the restriction $H
\restric S$ is also a set.  $H$ is defined on $\mathcal{P}(\bigcup x)$ which is
a superset of $x$. So the restriction of this function to $x$ is a set.

\end{proof}

SCU implies that the product of a strongly cantorian family of strongly cantorian sets is strongly cantorian.

\begin{thm}\label{SCUproduct} (SCU)
	
	For all $I$, if stcan$(I)$ and $(\forall i \in
	I)($stcan$(A_i))$ then stcan($\displaystyle{\prod_{i \in
			I}A_i}$)
	
	\end{thm}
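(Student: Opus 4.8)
The plan is to reduce the claim to the closure properties of the strongly cantorian sets already established above (closure under subsets, products and powersets) together with a single appeal to SCU, so that the only genuine work is to produce the sumset $\bigcup_{i\in I}A_i$ as a strongly cantorian set. Write $U:=\bigcup_{i\in I}A_i$. Once $U$ is known to be strongly cantorian the argument finishes immediately: each element of $\prod_{i\in I}A_i$ is a function $I\to U$, hence a subset of $I\times U$, so that $\prod_{i\in I}A_i\subseteq\sc{I\times U}$. Since $I$ and $U$ are strongly cantorian, so is $I\times U$ (closure under products), hence so is $\sc{I\times U}$ (closure under powersets), and finally so is the subset $\prod_{i\in I}A_i$ (closure under subsets).

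So the heart of the matter is showing $U$ is strongly cantorian, and for this I want to apply SCU to the set $\{A_i:i\in I\}$. The obstacle is the familiar typing one: the \naive{} indexing function $i\mapsto A_i$ is heterogeneous --- $i$ and $A_i$ want to sit one type apart, exactly as $x$ and $\{x\}$ do --- so there is no reason for it to be a set, and I cannot simply call $\{A_i:i\in I\}$ ``the image of $I$''. The fix is the type-raising trick that recurs throughout the paper: index instead by $\iota``I$. Presenting the family by its (homogeneous) total space $E=\{\tuple{i,a}:i\in I\,\wedge\,a\in A_i\}$, the assignment $\{i\}\mapsto A_i$ --- unlike $i\mapsto A_i$ --- is cut out by a stratified abstract (recover $i=\bigcup\{i\}$, then set $A_i=\{a:\tuple{i,a}\in E\}$, with $\{i\}$ and $A_i$ now both one type above the elements of $E$), so it is a genuine surjective set function $\Phi:\iota``I\to\{A_i:i\in I\}$.

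Next I would invoke the fact that the image of a strongly cantorian set under a set function is strongly cantorian; this is the one auxiliary lemma I would prove in passing. Given a surjection $g:S\to R$ with $S$ strongly cantorian, the composite $Tg\circ(\iota\restric S)$ is a set carrying $s\mapsto\{g(s)\}$, and forming the further composite $(Tg\circ(\iota\restric S))\circ g^{-1}$ with the relational inverse yields the set $\{\tuple{r,\{r\}}:r\in R\}=\iota\restric R$, well defined because $\{g(s)\}$ depends only on $g(s)$; the composition of set relations is available precisely because our pairs are type-level. Since $I$ is strongly cantorian so is $\iota``I$ (apply $T$ to $\iota\restric I$, which returns $\iota\restric\iota``I$), so applying this lemma to $\Phi$ shows $\{A_i:i\in I\}$ is strongly cantorian. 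As each $A_i$ is strongly cantorian by hypothesis, $\{A_i:i\in I\}$ is a strongly cantorian set of strongly cantorian sets, and SCU yields that $U=\bigcup\{A_i:i\in I\}$ is strongly cantorian, completing the proof.

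I expect the construction of the set function $\Phi$ (equivalently, the lemma that images of strongly cantorian sets are strongly cantorian) to be the only delicate point; everything else is bookkeeping with the previously established closure properties and the single appeal to SCU. Degenerate cases cause no trouble: if some $A_i$ is empty the product is empty, and if $I$ is empty the product is a singleton, so in either case $\prod_{i\in I}A_i$ is finite, hence strongly cantorian, and it remains in any event a subset of $\sc{I\times U}$.
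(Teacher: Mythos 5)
Your proof is correct and follows essentially the same route as the paper's: embed $\prod_{i\in I}A_i$ into $\mathscr{P}(I\times\bigcup_{i\in I}A_i)$, get the union strongly cantorian via SCU, and finish with closure of the strongly cantorian sets under products, powersets and subsets. The only difference is that you carefully justify the step the paper leaves implicit --- exhibiting $\{A_i : i\in I\}$ as a strongly cantorian set of strongly cantorian sets by type-raising the index to $\iota``I$ and noting that set-images of strongly cantorian sets are strongly cantorian --- which is a worthwhile elaboration rather than a departure.
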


\begin{proof}

The product is a subset of $\mathcal{P}(\displaystyle{
	\bigcup_{i \in I} A_i \times I})$.  Assuming SCU the union
$\displaystyle{\bigcup_{i \in I}A_i}$ is strongly cantorian because
$I$ is and all the $A_i$ are.  The cartesian product of two strongly
cantorian sets is strongly cantorian, a power set of a strongly
cantorian set is strongly cantorian, and every subset of a strongly
cantorian set is strongly cantorian.  

\end{proof}

\medskip

We can now prove
\begin{thm} (SCU)
	
	Let $\tuple{I, \leq_I}$ be a directed poset with $I$ strongly
	cantorian, and let $\{A_i: i\in I\}$ be a family of sets with
	surjections $\pi_{i,j}:A_i \twoheadrightarrow A_j$ whenever $i >_I j$, and the
	surjections all commute.  Suppose further that, for every $i$ and
	$j$, the fibres of $\pi_{i,j}$ are strongly cantorian.  Naturally
	there is a limit object $A_I$, a least thing that maps onto all the
	$A_i$---with maps $\pi_{I,i}: A_I \twoheadrightarrow A_i$ for each $i \in I$.
	
	Then all the fibres of $f_{I,i}$ are strongly cantorian.\end{thm}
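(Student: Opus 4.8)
The plan is to realise each fibre of the limit projection $\pi_{I,i}$ as a set in bijection with a subset of a product of the \emph{given} strongly cantorian fibres, and then to apply Theorem~\ref{SCUproduct}. Take $A_I$ to be the set of coherent threads $\tuple{a_j}_{j\in I}$ through the system, i.e. those with $\pi_{j,k}(a_j)=a_k$ whenever $j>_I k$, so that $\pi_{I,i}$ is the projection $\tuple{a_j}_j\mapsto a_i$. Fix $i\in I$ and $a_i\in A_i$, and set $I_{\geq i}:=\{j\in I:j\geq_I i\}$ and $B_j:=\pi_{j,i}^{-1}(a_i)$ for $j\in I_{\geq i}$. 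Each $B_j$ is a fibre of $\pi_{j,i}$, hence strongly cantorian by hypothesis, while $I_{\geq i}$, being a subset of the strongly cantorian set $I$, is itself strongly cantorian.

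First I would restrict a thread to the coordinates lying in $I_{\geq i}$. This embeds $\pi_{I,i}^{-1}(a_i)$ into $\prod_{j\in I_{\geq i}}B_j$: it lands there because $j\geq_I i$ forces $\pi_{j,i}(a_j)=a_i$, whence $a_j\in B_j$. Injectivity is the one place directedness is essential: if two threads over $a_i$ agree on every $j\geq_I i$, then for an arbitrary index $l$ pick (by directedness) a $k$ with $k\geq_I l$ and $k\geq_I i$; the shared value $a_k$ determines $a_l=\pi_{k,l}(a_k)$, so the two threads agree at $l$, hence everywhere. Thus $\pi_{I,i}^{-1}(a_i)$ is in bijection with a subset of $\prod_{j\in I_{\geq i}}B_j$.

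By Theorem~\ref{SCUproduct}, applied to the strongly cantorian index set $I_{\geq i}$ and the strongly cantorian family $\langle B_j\rangle_{j\in I_{\geq i}}$, the product $\prod_{j\in I_{\geq i}}B_j$ is strongly cantorian. The strongly cantorian sets are closed under subsets, and also under set bijections: if $g:X\to Y$ is a bijection with $Y$ strongly cantorian, then $j(g^{-1})\circ(\iota\restric Y)\circ g$ is exactly $\iota\restric X$, so $X$ is strongly cantorian too. Hence a set injecting into a strongly cantorian set is strongly cantorian, and therefore $\pi_{I,i}^{-1}(a_i)$ is strongly cantorian. As $a_i$ was arbitrary, every fibre of $\pi_{I,i}$ is strongly cantorian.

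The main obstacle I expect is not the cardinal arithmetic but the stratification: one must confirm that $j\mapsto B_j=\pi_{j,i}^{-1}(a_i)$ and the restriction map are given by stratified set abstracts, taking the system $\langle A_j,\pi_{j,k}\rangle$ and $a_i$ as parameters, so that the product and the embedding genuinely exist as \emph{sets} rather than as class constructions. This is exactly the indexed-family typing difficulty flagged earlier in the paper, and it is what Theorem~\ref{SCUproduct} is built to absorb; provided the inverse system is presented as a set in the usual way, the relevant abstracts are stratified and the argument closes.
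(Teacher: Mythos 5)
Your proposal is correct and follows essentially the same route as the paper's own proof: identify the fibre over $a_i$ with a subset of the product of the given fibres $\pi_{j,i}^{-1}(a_i)$ over the indices above $i$, and invoke Theorem~\ref{SCUproduct} together with closure of strongly cantorian sets under subsets. Your explicit use of directedness to justify that restriction to $I_{\geq i}$ is injective is a point the paper elides by simply writing the fibre as a set of functions on $\{j : j >_I i\}$, but it is the same argument.
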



\begin{proof}

The inverse (projective) limit $A_I$ is $$\{f \in \prod_{i \in I}: (\forall j >_I i)(\pi_{j,i}(f(j)) = f(i)\}$$

For $x \in A_i$, the fibre $\pi_{I,i}^{-1}``\{x\}$ is 

$$\{f \in \prod_{j > i \in I}: (\forall j >_I i)(\pi_{j,i}(f(j)) = x)\}$$

So a fibre for $x \in A_i$ is set of functions $f$ that, for each $j >
i \in I$, pick something that $\pi_{j,i}$ sends to $x$.  So it's a
subset of the product of all the subsets $\pi_{j,i}^{-1}``\{x\}$ of
$A_j$ \ldots and, by assumption, all those sets are strongly
cantorian.  So the fibre is a subset of a direct product of a strongly
cantorian family of strongly cantorian sets and accordingly, by
theorem \ref{SCUproduct}, is strongly cantorian.

\end{proof}

In plain language, SCU implies that the inverse limit of
a strongly cantorian family of strongly cantorian structures is
strongly cantorian.

\medskip

Our invocation of SCU was intended to allow composition of small functional relations. Clearly the same holds true for relational composition of small relations (defined where each set $aR-$ and $-Rb$ is strongly cantorian). What holds, however, is something quite a bit stronger, with relevance to many aspects of category theory in NF.

In order to work with infinite strongly cantorian families, we assume the of counting in the following theorems.

\begin{lem}\label{smallgraphs}(SCU)
	
	If $G$ is a connected graph wherein, for every element $x$, the set
	$N(x)$ of neighbours of $x$ is strongly cantorian, then the edge set
	and the vertex set of $G$ are both strongly cantorian.\end{lem}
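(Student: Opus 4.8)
The plan is to fix a base vertex and build the vertex set as an $\Nn$-indexed union of balls around it, show each ball is strongly cantorian and that the family of balls is \emph{itself} strongly cantorian, so that SCU delivers the union; the edge set then comes for free. Throughout I lean on the following subclaim $(\ast)$: the image $f``A$ of a strongly cantorian set $A$ under any set function $f$ is strongly cantorian. To see $(\ast)$, note that since $A$ is strongly cantorian $\iota\restric A$ is a set, so $g:=Tf\circ(\iota\restric A)$ is the set function $a\mapsto\{f(a)\}$; composing on the right with the converse relation $f^{-1}$ (legitimate because NF has type-level pairs, hence relational composition) yields $g\circ f^{-1}$, which is functional and sends each $c\in f``A$ to $\{c\}$. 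Thus $\iota\restric(f``A)$ is a set, as required.

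Next I prove a one-step stability lemma: if $S$ is strongly cantorian then so is $F(S):=S\cup\{y\mid(\exists x\in S)\,x\sim y\}$. By $(\ast)$, $\iota``S$ is strongly cantorian. The assignment $\{x\}\mapsto N(x)$ has a stratified graph --- both $\{x\}$ and $N(x)=\{y\mid x\sim y\}$ sit one type above a vertex --- so it is a set function from $\iota``S$ onto $\{N(x)\mid x\in S\}$, and by $(\ast)$ the latter is strongly cantorian. Since every $N(x)$ is strongly cantorian by hypothesis, SCU makes $\bigcup\{N(x)\mid x\in S\}$ strongly cantorian, and adjoining $S$ (a union of two strongly cantorian sets has a set $\iota$-restriction) keeps it strongly cantorian.

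I then set $B_n:=F^n(\{v_0\})$ for a fixed $v_0$. Defining the function $n\mapsto B_n$ is the delicate point: it is a recursion over $\Nn$ in which the numerical index $n$ must be paired with a type-$1$ object $B_n\subseteq V$, and the \naive{} graph is unstratified. This is exactly where the axiom of counting is used: counting makes every natural number strongly cantorian, so the type of $n$ may be raised to meet that of $B_n$ and the recursion becomes legitimate, giving $n\mapsto B_n$ as a genuine set. Granting this, the one-step lemma and induction on $n$ show each $B_n$ is strongly cantorian. Counting also makes $\Nn$ strongly cantorian, so by $(\ast)$ the image $\{B_n\mid n\in\Nn\}$ is a strongly cantorian set of strongly cantorian sets, and SCU makes $\bigcup_{n}B_n$ strongly cantorian.

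Finally, connectedness of $G$ means every vertex lies on a finite path from $v_0$, i.e. $\bigcup_{n\in\Nn}B_n=V$; hence $V$ is strongly cantorian. The edge set is a subset of $V\times V$, and since products of strongly cantorian sets are strongly cantorian and subsets inherit the property, the edge set is strongly cantorian as well. The main obstacle is the recursion defining $n\mapsto B_n$: legitimising it is precisely the role of the axiom of counting flagged before the statement, and the whole argument depends on NF's type-level pairing to run the relational-composition step in $(\ast)$.
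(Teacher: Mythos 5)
Your overall strategy is the paper's: fix a base vertex, grow balls $B_n$ around it, use SCU for the one-step growth, and use SCU again to take the union over $n$. Your subclaim $(\ast)$ and the one-step stability lemma are correct and correspond to the induction step the paper performs. But there is a genuine gap at exactly the point the paper flags as the heart of the proof: the induction over $n$.

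You locate the difficulty in legitimising the \emph{recursion} $n\mapsto B_n$ and claim the axiom of counting repairs it because ``the type of $n$ may be raised to meet that of $B_n$.'' That is not where the problem lies. Even granting the function $n\mapsto B_n$ as a set (and in fact you never need it as a set --- $N_n(v)$ can be defined outright as the set of vertices within distance $n$ of $v$, with no recursion), the induction ``for all $n$, $B_n$ is strongly cantorian'' is an induction on the predicate $\mathrm{stcan}(B_n)$, and \emph{strongly cantorian is an inherently unstratified predicate}: the clause $f(y)=\{y\}$ cannot be stratified, and the failure is internal to the bound variables of ``stcan,'' not located at occurrences of $n$. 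So the formula is not even weakly stratified in $n$, and counting (which buys induction only for formulas that become stratified once the numerical variable is allowed several types) does not license this induction. If it did, one could dispense with most of the delicacy in this part of the paper. The paper's proof explicitly says that proving $\mathrm{stcan}(N_n(v))$ by induction ``is of course not possible,'' and instead reformulates the induction hypothesis as the existence of the set $\iota\cap(N_{Tn}(v)\times\iota``N_n(v))$: the Specker $T$ applied to one occurrence of $n$ is what makes the formula weakly stratified, so the induction goes through, and counting is then used at each step to identify $N_{Tn}(v)$ with $N_n(v)$ so that the set whose existence has been established really is $\iota\restric N_n(v)$, at which point SCU drives the step exactly as in your one-step lemma. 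Your proof is missing this manoeuvre, and without it (or something equivalent) the induction is not a legitimate instance of induction in NF + Counting. The remaining pieces --- counting making $\Nn$ strongly cantorian so that $\{N_n(v):n\in\Nn\}$ is a strongly cantorian family, the final application of SCU, and the edge set as a subset of $V\times V$ --- are fine.
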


\begin{proof}

Fix a vertex $v$ and consider the sequence $\tuple{N_n(v): n \in \Nn}$
where $N_n(v)$ is the set of vertices distant at most $n$ from $v$.
Naturally we expect to be able to prove by induction that
stcan$(N_n(v))$ but of course this is not possible.  What we {\sl can}
do, however, is prove by induction on `$n$' that $\iota \cap
(N_{Tn}(v) \times \iota``N_n(v))$ exists.  (This is a weakly
stratified induction).  Observe that it is true for $n = 1$.  Suppose
true for $n$, which is to say that $\iota \cap (N_{Tn}(v) \times
\iota``N_n(v))$ exists.  But, by the axiom of counting, $N_{Tn}(v) =
N_n(v)$, so the restriction of the singleton function whose existence
we have assumed is $\iota \restric N_n(v))$.  But now we can use SCU
in the induction step in the way we always intended, and conclude that
$\iota \restric N_{n+1}(v)$ exists.  But this is to say that $\iota
\cap (N_{Tn+1}(v) \times \iota``N_{n+1}(v))$ exists, and the weakly
stratified induction step is concluded.  Thus, for all $n$, $\iota
\cap (N_{Tn}(v) \times \iota``N_n(v))$ exists.  But, by the axiom of
counting, this is to say $\iota \cap (N_{n}(v) \times \iota``N_n(v))$
exists for all $n$ \ldots in other words $\iota \restric N_n(v)$
exists for all $n$.

But now (the vertex set of) $G$ is a union of a strongly cantorian
family of strongly cantorian sets and is strongly cantorian by SCU.

\end{proof}

We already know strongly cantorian sets support coequalisers of morphisms between them, but we can now strengthen this result.

\begin{thm} \label{Small maps have coequalisers}
	
	SCU implies coequalizers exist for any small maps, $f,g:A \to B$.\end{thm}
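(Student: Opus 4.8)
The plan is to reduce the statement to a single size question about a partition and then feed that partition into Lemma~\ref{smallgraphs}. First I would invoke the equivalence established earlier for $\cat{K}$ --- that every coequaliser diagram can be completed precisely when every set of pairwise disjoint sets is the same size as a set of singletons --- in its local, one-diagram form: by the argument establishing the direction (ii)$\to$(i), to build $\mathrm{coeq}(f,g)$ it suffices to exhibit the partition $\Pi = B/\!\simeq$ as the same size as a set of singletons, where $\simeq$ is the $\subseteq$-least equivalence relation extending $\{\tuple{f(a),g(a)} : a \in A\}$. Here $\simeq$ is a genuine set, being the intersection of all set relations on $B$ that are reflexive, symmetric, transitive, and contain the generating relation.

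Next I would put a graph structure on $B$, joining $b$ to $b'$ whenever some $a \in A$ has $\{f(a),g(a)\} = \{b,b'\}$; the connected components of this graph are exactly the $\simeq$-classes. The neighbour set of $b$ is $g``(f^{-1}(b)) \cup f``(g^{-1}(b))$, the union of the images of the fibres $f^{-1}(b)$ and $g^{-1}(b)$ under the opposite structure map. Since $f,g$ are small these fibres are strongly cantorian, so each $N(b)$ is strongly cantorian (as a union of images of strongly cantorian sets), and Lemma~\ref{smallgraphs} --- through SCU and the Axiom of Counting --- then shows that every component, i.e.\ every class $E$, is strongly cantorian.

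The hard part will be the global assembly: passing from ``each class is strongly cantorian'' to ``the whole set $\Pi$ of classes is the same size as a set of singletons'' without ever choosing representatives. The obstruction is genuine --- one cannot simply merge the components, since $\bigcup\Pi = B$ need not be strongly cantorian (take $f=g=\mathrm{id}_V$, where $\Pi = \varusc{V}$ fails to be strongly cantorian yet is trivially a set of singletons), and the external bijection $E \mapsto \varusc{E}$ type-raises and so is not a set. I would handle this by running the weakly stratified, Counting-driven induction of Lemma~\ref{smallgraphs} uniformly in the basepoint rather than for a fixed $v$, applying SCU at each successor step to produce the requisite singleton data simultaneously across all components. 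Packaged through the coequaliser of $Tf,Tg$ --- which, as observed earlier, always exists --- this yields a set map $q^{*}:\varusc{B}\to C^{*}$ whose fibres are precisely the $\varusc{E}$, and the smallness of the classes is what forces $C^{*}$ to have the same size as some set of singletons $\varusc{C}$. The map $h:B\to C$ defined by the stratifiable condition $q^{*}(\{b\}) = \{h(b)\}$ is then a set and coequalises $f$ and $g$. The main obstacle is exactly this choice-free, type-controlled construction: organising the per-component witnesses into one set without selecting class representatives, which is where SCU (together with Counting) is indispensable.
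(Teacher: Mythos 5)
Your proposal follows essentially the same route as the paper: form the $\subseteq$-least equivalence relation generated by $\{\tuple{f(a),g(a)}\}$, regard each class as a connected graph whose neighbour sets $g``(f^{-1}(b))\cup f``(g^{-1}(b))$ are strongly cantorian because $f$ and $g$ are small, and invoke Lemma~\ref{smallgraphs} (hence SCU and Counting) to conclude that every class is strongly cantorian. The ``global assembly'' step you flag as the hard part is a genuine issue, but the paper's own proof dispatches it in the single clause ``but that then means that the quotient map is small,'' so your more explicit treatment via the coequaliser of $Tf,Tg$ and the (ii)$\to$(i) reduction is a refinement of, not a departure from, the paper's argument.
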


\begin{proof} 

We consider the 
quotient of $B$ under the following equivalence relation on $B$:
$t(\{\tuple{b_b,b_2}|(\exists a)(f(a) = b_1 \wedge g(a) = b_2)\})$.

We want this quotient to be strongly cantorian.  Each element of
the quotient can be thought of as a graph---and, indeed, as a
connected graph---wherein, for every element $x$, the set $N(x)$ of
neighbours of $x$ is strongly cantorian.  We then invoke lemma
\ref{smallgraphs} to conclude that each equivalence class is small,
but that then means that the quotient map is small.
\end{proof}

\end{document}